\theoremstyle{plain}
\newtheorem{theorem}{Theorem}[section]
\newtheorem{lemma}[theorem]{Lemma}
\newtheorem{utv}[theorem]{Proposition}
\newtheorem{cor}[theorem]{Corollary}
\theoremstyle{definition}
\newtheorem{defin}[theorem]{Definition}
\newtheorem{constr}[theorem]{Construction}
\newtheorem{exam}[theorem]{Example}
\theoremstyle{remark}
\newtheorem{rem}[theorem]{Remark}
\newcommand{\aff}{\mathrm{aff}\,}
\renewcommand{\a}{\alpha}
\renewcommand{\b}{\beta}
\renewcommand{\int}{\mathrm{int}}
\renewcommand{\epsilon}{\varepsilon}
\renewcommand{\vert}{\mathrm{vert }}
\newcommand{\ib}[1]{\boldsymbol{#1}}
\newcommand{\m}[1]{\mathcal{#1}}
\newcommand{\F}{\mathcal{F}}
\renewcommand{\P}{\mathcal{P}}
\renewcommand{\L}{\mathcal{L}}
\newcommand{\B}{\mathcal{B}}
\newcommand{\W}{\mathcal{W}}
\newcommand{\D}{\mathcal{D}}
\renewcommand{\C}{\mathcal{C}}
\renewcommand{\phi}{\varphi}
\begin{document}
\title[Construction of fullerenes]{Construction of fullerenes}
\author{V.M.Buchstaber,~N.Yu.Erokhovets}
\thanks{The work was supported by the RSCF grant No 14-11-00414}

\begin{abstract} We present an infinite series of operations on fullerenes generalizing the Endo-Kroto operation,
such that each combinatorial fullerene is obtained from the dodecahedron by a
sequence of such operations. We prove that these operations are invertible in
the proper sense, and are compositions of $(1;4,5)-$, $(1;5,5)$-,
$(2,6;4,5)$-, $(2,6;5,5)$-, $(2,6;5,6)$-, $(2,7;5,5)$-, and
$(2,7;5,6)$-truncations, where each truncation increases the number of
hexagons by one.
\end{abstract}
\maketitle
\section{Introduction}
The study of fullerenes is stimulated by problems of the quantum physics, the
quantum chemistry and the nanotechnology \cite{LMR06}. For an introduction to
the mathematical study of fullerenes we refer to \cite{DSS13}. The effective
algorithm of enumeration of combinatorial types of fullerenes was described
in \cite{BD97}.

The Endo-Kroto operation (Fig. \ref{EK-O}, see \cite{EK92}) is an operation
on fullerenes that increases the number $p_6$ of hexagons by one. It allows
to obtain a fullerene with arbitrary number $p_6\geqslant 2$ from the Barrel
fullerene ($p_6=2$). For details see, for example, \cite{DSS13}.
\begin{figure}[h]
\begin{center}
\includegraphics[scale=0.3]{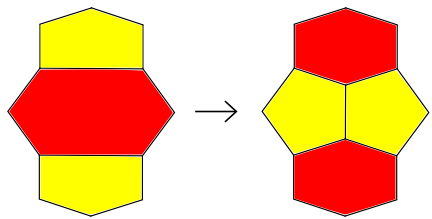}
\end{center}
\caption{The Endo-Kroto operation}\label{EK-O}
\end{figure}

The generalizations of the Endo-Kroto operation were studied in
\cite{BFFG06,GG10,GGG14}. The idea is to transform one fullerene to other by
replacement of one fragment by another fragment with the same boundary. If
the operation increases the number of facets, then this is named a
\emph{growth-operation}. If the operation does not change the number of
facets, then it is named an \emph{isomerization-operation}.  In \cite{BFFG06}
it is mentioned that
\begin{enumerate}
\item an infinite number of growth operations would be needed to obtain
    all fullerenes;
\item it is an open question whether a combination of a finite number of
    isomerization and growth steps can access all fullerenes. This
    question could be answered in the negative if it could be shown that
    isomerization fragments with less than two pentagons cannot occur as
    parts of fullerenes.
\end{enumerate}
In \cite{GG10} it was proved that isomerization patches with less than two
pentagons, if they exist, must be relatively large.

In this paper we present an infinite number of grows-operations that produce
all combinatorial fullerenes from the dodecahedron. Most of our operations
one can find in~\cite{GG10}.

The paper has the following structure.

In Section \ref{Def} we give necessary information about polytopes.

In Section \ref{Belts}  we develop the technique of $k$-belts.

In Section \ref{Trunc} we present the technique of $(s,k)$-truncations with
inverse operations of straightenings along edges \cite{BE15}.

In Section \ref{Main-Th} we prove the main result by the following scheme.
\begin{itemize}
\item First we prove that any combinatorial fullerene except for the
    dodecahedron contains one of the fragments from the given infinite
    series.
\item Then we prove that any fragment from the given series can be moved
    to the other fragment by a sequence of finite number of
    straightenings along edges, which are well defined.
\end{itemize}
\section{Definitions}\label{Def}
For an introduction to the polytope theory we recommend the books
\cite{Gb03,Z07}.

\begin{defin}
A \emph{convex polytope} $P$ is a bounded intersections of finite number of
halfspaces in $\mathbb R^n$. Equivalently, it is a convex hull of a finite
set of points in $\mathbb R^n$.

Denote by $\m{F}=\{F_1,\dots,F_m\}$ the set of all facets of $P$. Denote by
$s_i$ the number of edges of $F_i$. By definition $\dim P=\dim \aff(P)$.

For any collection $S$ of facets set $|S|=\bigcup_{F_i\in S}F_{i}$. Then
$|\F|=\partial P$ is the boundary of~$P$.

In what follows by a \emph{polytope} we mean a convex polytope. Polytopes of
dimension $n$ we call \emph{$n$-polytopes}.

An $n$-polytope is called \emph{simple} if any it's vertex belongs to exactly
$n$ facets.

For a polytope $P$ denote by $f_i(P)$ be the number of $i$-dimensional faces
and by $p_k(P)$ the number of $k$-gonal $2$-faces.
\end{defin}

\begin{utv}[The Euler formula]
For a $3$-polytope we have
$$
f_0-f_1+f_2=2
$$
\end{utv}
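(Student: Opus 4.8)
The plan is to reduce the statement to Euler's relation for connected planar graphs and to prove the latter by induction. Since $P$ is a $3$-polytope, its boundary $\partial P=|\F|$ is homeomorphic to the $2$-sphere, and the vertices and edges of $P$ form a graph embedded in $\partial P$ whose complementary regions are exactly the relative interiors of the facets $F_1,\dots,F_m$. I would fix a point $x$ in the relative interior of one facet $F_m$ and apply stereographic projection of $\partial P\setminus\{x\}$ onto a plane disjoint from $P$; this carries the $1$-skeleton of $P$ to a connected plane graph $G$ with $V=f_0$ vertices and $E=f_1$ edges, whose complementary regions are the $f_2-1$ facets other than $F_m$ together with one unbounded region replacing $F_m$, so that $G$ has $F=f_2$ faces in all (counting the unbounded one). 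It then suffices to show $V-E+F=2$ for every connected plane graph.

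This I would prove by induction on the number $E$ of edges. For $E=0$, connectedness forces $G$ to be a single vertex, so $V=1$, $F=1$, and $V-E+F=2$. For $E\geqslant 1$ there are two cases. If $G$ has a cycle, delete an edge $e$ lying on one: the graph stays connected, $V$ is unchanged, $E$ drops by one, and the two distinct regions bordering $e$ merge into a single region so $F$ drops by one; the inductive hypothesis gives $V-(E-1)+(F-1)=2$. If $G$ has no cycle it is a tree, hence (having at least one edge) has a leaf; deleting a leaf together with its edge drops $V$ and $E$ each by one and leaves $F=1$, and the inductive hypothesis again finishes the step. Substituting $V=f_0$, $E=f_1$, $F=f_2$ yields $f_0-f_1+f_2=2$.

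The step I expect to be the main obstacle is the geometric reduction in the first paragraph: one must justify carefully that the $1$-skeleton of $P$ is genuinely embedded in $\partial P\cong S^2$ with the facets as its complementary regions, and that stereographic projection turns this into a bona fide plane graph contributing exactly one extra unbounded face. All the topological content of the theorem is concentrated in the fact that $\partial P$ is a $2$-sphere; everything after that is elementary counting. If one prefers to avoid point-set topology, an alternative is to use a line shelling of $\partial P$ in the sense of Bruggesser--Mani: sweeping a generic linear functional across $P$ attaches the facets one at a time, and a direct count of the new vertices and edges contributed at each step shows that $f_0-f_1+f_2$ stays equal to $2$ throughout the process.
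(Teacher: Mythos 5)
Your proof is correct, but note that the paper itself offers no proof of this proposition: it is stated as a classical fact, with the reader implicitly referred to the standard texts on polytope theory cited in Section \ref{Def}. What you give is the standard argument --- radial/stereographic projection of $\partial P\cong S^2$ onto the plane followed by induction on the number of edges of a connected plane graph --- and it is sound. One remark on where the topology actually sits: you say all the topological content is concentrated in $\partial P$ being a $2$-sphere, but the inductive step also quietly invokes the Jordan curve theorem in the plane, namely when you assert that an edge lying on a cycle borders two \emph{distinct} faces (so that deleting it decreases $F$ by exactly one). If you want a fully rigorous write-up you should either prove that claim or cite it explicitly; this is consonant with the paper's own practice, since it states its own version of the Jordan curve theorem (Theorem \ref{Jordan-Th}) precisely because such facts are not free. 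Your alternative via a Bruggesser--Mani line shelling is also a legitimate route and is closer in spirit to how modern references such as \cite{Z07} derive the Euler relation in all dimensions; the planar-graph induction buys elementarity, while the shelling argument buys generality.
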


The following fact is a corollary of the Euler formula
\begin{utv}For a simple $3$-polytope $P$ we have
\begin{equation}\label{pk}
3p_3+2p_4+p_5=12+\sum\limits_{k\geqslant 7}(k-6)p_k.
\end{equation}
\end{utv}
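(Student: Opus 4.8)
The plan is to derive the identity \eqref{pk} directly from the Euler formula together with two elementary incidence counts valid for any simple $3$-polytope. First I would use simplicity: every vertex lies in exactly $3$ facets and $3$ edges, so $2f_1 = 3f_0$, hence $f_0 = \tfrac{2}{3}f_1$. Next I would count incident (facet, edge) pairs in two ways: each edge bounds exactly $2$ facets, while the facet $F_i$ contributes $s_i$ edges, so $\sum_i s_i = 2f_1$. Since each $2$-face is a $k$-gon for some $k\geqslant 3$, we also have $f_2 = \sum_{k\geqslant 3} p_k$ and $\sum_i s_i = \sum_{k\geqslant 3} k\,p_k$.

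Substituting $f_0 = \tfrac{2}{3}f_1$ into the Euler formula $f_0 - f_1 + f_2 = 2$ gives $f_2 - \tfrac{1}{3}f_1 = 2$, i.e. $6f_2 - 2f_1 = 12$. Now replace $2f_1 = \sum_{k\geqslant 3} k\,p_k$ and $f_2 = \sum_{k\geqslant 3} p_k$ to obtain $6\sum_{k\geqslant 3} p_k - \sum_{k\geqslant 3} k\,p_k = 12$, that is $\sum_{k\geqslant 3}(6-k)p_k = 12$. Separating the terms with $k\leqslant 6$ from those with $k\geqslant 7$ (the $k=6$ term vanishes) yields
\[
3p_3 + 2p_4 + p_5 = 12 + \sum_{k\geqslant 7}(k-6)p_k,
\]
which is exactly \eqref{pk}.

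There is essentially no obstacle here; the only point requiring care is the justification of the two edge-counting identities, which rests solely on the definition of a simple polytope (each vertex in exactly $n=3$ facets, equivalently in exactly $3$ edges) and on the fact that in a $3$-polytope each edge is contained in precisely two facets and joins precisely two vertices — standard facts about the face lattice of a $3$-polytope. I would state these counts explicitly as the first two lines of the proof and then perform the linear substitution into Euler's formula. If one wants to be fully self-contained, the relation $2f_1 = 3f_0$ can also be seen from the dual simplicial polytope, but the direct incidence count is cleanest and avoids invoking duality.
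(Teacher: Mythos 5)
Your derivation is correct, and it is exactly the standard argument that the paper invokes implicitly when it says the identity ``is a corollary of the Euler formula'' without writing out the steps: use simplicity to get $3f_0 = 2f_1$, count facet--edge incidences to get $\sum_k k\,p_k = 2f_1$ and $\sum_k p_k = f_2$, substitute into $f_0 - f_1 + f_2 = 2$, and rearrange. Nothing to add; the proof is complete and matches the intended one.
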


\begin{defin}
A \emph{combinatorial polytope} is an equivalence class of
combinatorially equivalent convex polytopes, where two polytopes $P$ and $Q$
are \emph{combinatorially equivalent} (we denote it $P\simeq Q$), if there is
an inclusion-preserving bijection of the sets of their faces.
\end{defin}

\begin{defin}
A (mathematical) \emph{fullerene} is a simple $3$-polytope with all facets
pentagons and hexagons.

A fullerene is called an \emph{$IPR$-fullerene}, if it has no incident
pentagons.
\end{defin}
The formula (\ref{pk}) implies the following.
\begin{cor}
For a fullerene $P$ we have $p_5=12$.
\end{cor}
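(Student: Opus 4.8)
The plan is to apply the identity (\ref{pk}) directly. First I would recall that, by definition, a fullerene is a simple $3$-polytope all of whose $2$-faces are pentagons or hexagons; in the notation introduced above this means $p_3=p_4=0$ and $p_k=0$ for every $k\geqslant 7$. Substituting these values into (\ref{pk}) collapses the left-hand side to the single term $p_5$ and kills the entire sum on the right-hand side (which ranges over $k\geqslant 7$), leaving $p_5=12$.

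I do not expect any real obstacle here, since all the work has already been done in establishing (\ref{pk}) as a corollary of the Euler formula. The only point worth emphasising is that the count $p_6$ of hexagons does not enter the computation at all: hexagons appear in (\ref{pk}) with coefficient $k-6=0$, so they are invisible to this constraint. This is precisely the phenomenon that allows fullerenes with arbitrarily large $p_6$ to exist, and it is the reason the paper must work harder, via the belt and truncation techniques of Sections~\ref{Belts} and~\ref{Trunc}, to control the combinatorics beyond this single numerical identity.
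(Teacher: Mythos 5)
Your argument is exactly the paper's: the corollary is stated immediately after formula~(\ref{pk}) with the remark that it is implied by it, and substituting $p_3=p_4=0$ and $p_k=0$ for $k\geqslant 7$ is the intended (and only) step. Correct, and the added observation that $p_6$ drops out because its coefficient is $k-6=0$ is a nice aside but not needed.
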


\begin{theorem}[\cite{Eb1891,Br1900}, see also \cite{Gb03}]
Any simple $3$-polytope is combinatorially equivalent to a polytope that is
obtained from the tetrahedron by a sequence of \emph{vertex, edge} and
\emph{$(2,k)$-truncations}.
\end{theorem}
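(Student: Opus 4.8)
The plan is to argue by induction on the number $f_2$ of facets, after recasting the statement in terms of the inverse operations. Call the inverse of a vertex-, edge-, or $(2,k)$-truncation a \emph{straightening}: it shrinks the facet that the truncation had created (a triangle, or a small polygon) by ``straightening out'' the edges running into it, so that $f_2$ drops by one. Since a truncation followed by the corresponding straightening returns the original polytope, it suffices to prove the following: every simple $3$-polytope $P$ with $f_2(P)\geqslant 5$ admits a well-defined straightening onto a simple $3$-polytope $P'$ with $f_2(P')=f_2(P)-1$. Granting this, the induction closes at once: the only simple $3$-polytope with $f_2=4$ is the tetrahedron, which serves as the base case; and if $P'$ comes from the tetrahedron by a sequence of truncations, then so does $P$, by appending the truncation inverse to the straightening $P\to P'$.

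To decide where to straighten I would invoke formula (\ref{pk}): since $3p_3+2p_4+p_5\geqslant 12>0$, the polytope $P$ has a facet $F$ with at most five edges, and I would split into three cases according to the size of the smallest such facet. If $p_3>0$, take a triangular facet $T$: the inverse of a vertex truncation, which pulls the three edges leaving $T$ until they meet at a point, is well defined provided the three facets adjacent to $T$ are pairwise distinct and pairwise nonadjacent away from $T$; when $f_2\geqslant 5$ this can fail only for a short list of local configurations, and for each of them a neighbouring small facet can be straightened instead. If $p_3=0$ but $p_4>0$, take a quadrilateral facet $Q$: contracting $Q$ to an edge can be done in two ways, creating one of two candidate adjacencies between opposite neighbours of $Q$, and these are precisely the inverses of an edge truncation and of a $(2,k)$-truncation; since the dual graph is planar, the two candidate adjacencies cannot both be present already, so at least one contraction is admissible, and one checks it produces no $2$-gon and no doubled edge. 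If $p_3=p_4=0$, then $p_5\geqslant 12$, so $P$ has at least twelve pentagons; here I would fix a pentagon $F$ and perform the $(2,k)$-straightening at $F$, the reduction in $f_2$ being automatic, and use the abundance of pentagons to choose $F$ so that its local neighbourhood avoids the degenerate patterns.

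The main obstacle is exactly this non-degeneracy check, which runs through all three cases: one must verify that the chosen straightening yields an honest convex simple $3$-polytope — no facet collapsing to a $2$-gon, no two facets meeting along two edges, equivalently the dual triangulation remaining $3$-connected. This is a finite analysis of the neighbourhood of $F$: the hypothesis $f_2\geqslant 5$, which by the Euler formula is equivalent to $f_0\geqslant 6$ for a simple $3$-polytope, rules out the small exceptional polytopes, and whenever the first-choice facet is blocked one shifts to a facet of the same or smaller size nearby. Dually, the whole argument amounts to the statement that every $3$-connected triangulation of the sphere is obtained from the boundary complex of the tetrahedron by repeatedly inserting a vertex of degree $3$, $4$, or $5$ into a face, and is in the end just Euler's formula together with connectivity bookkeeping.
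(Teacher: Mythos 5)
The paper does not prove this theorem; it is the classical Eberhard--Br\"uckner result, quoted with references only. So there is nothing internal to compare you against, and your proposal has to stand on its own. Your strategy --- induct on $f_2$ by showing that every simple $3$-polytope with $f_2\geqslant 5$ admits an inverse truncation, using formula (\ref{pk}) to locate a facet with at most five edges and working in the dual triangulation --- is indeed the standard route, and the skeleton is sound: the base case is right (the unique simple $3$-polytope with four facets is the tetrahedron), the reduction preserves simplicity, and a simple planar triangulation on at least four vertices is automatically $3$-connected, so Steinitz closes each step. Two of your three cases are also correctly justified. The triangle case is in fact cleaner than you suggest: since two facets of a $3$-polytope share at most one edge, the three neighbours of a triangular facet are pairwise distinct and pairwise adjacent only along the edges leaving the triangle, so for $f_2\geqslant 5$ the vertex-straightening never fails and no ``short list of exceptional configurations'' arises. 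The quadrilateral case is handled correctly by the $K_5$/planarity argument, with the no-$2$-gon check covered because you have already disposed of $p_3>0$.

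The one genuine gap is the pentagon case. ``Use the abundance of pentagons to choose $F$ so that its neighbourhood avoids the degenerate patterns'' is not an argument: having twelve pentagons does not by itself exclude the possibility that every one of them has all five fan-retriangulations blocked, and nothing in your sketch rules this out. What is actually true, and what you need, is a local statement about a \emph{single} degree-$5$ vertex $v$ of the dual triangulation with link cycle $a_1\dots a_5$: every diagonal $a_ia_j$ of this pentagon that is already an edge of the triangulation lies in the disk complementary to the star of $v$, so the present diagonals form a family of pairwise non-crossing chords of the pentagon; such a family has at most two members and, if two, they share a vertex. A short check of these configurations shows that some apex $a_i$ has both of its fan diagonals $a_ia_{i+2}$, $a_ia_{i+3}$ absent, so the corresponding $(2,k)$-straightening (edge contraction of $va_i$ in the dual, which satisfies the link condition of Lemma \ref{ras}) is admissible; the neighbours that lose an edge have degree at least $5$ since $p_3=p_4=0$, so no degenerate facet appears. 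This is exactly the same planarity mechanism you already used for the quadrilateral, extended from one diagonal to a fan; once it is inserted, the proof is complete.
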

\begin{figure}[h]
\begin{center}
\includegraphics[scale=0.13]{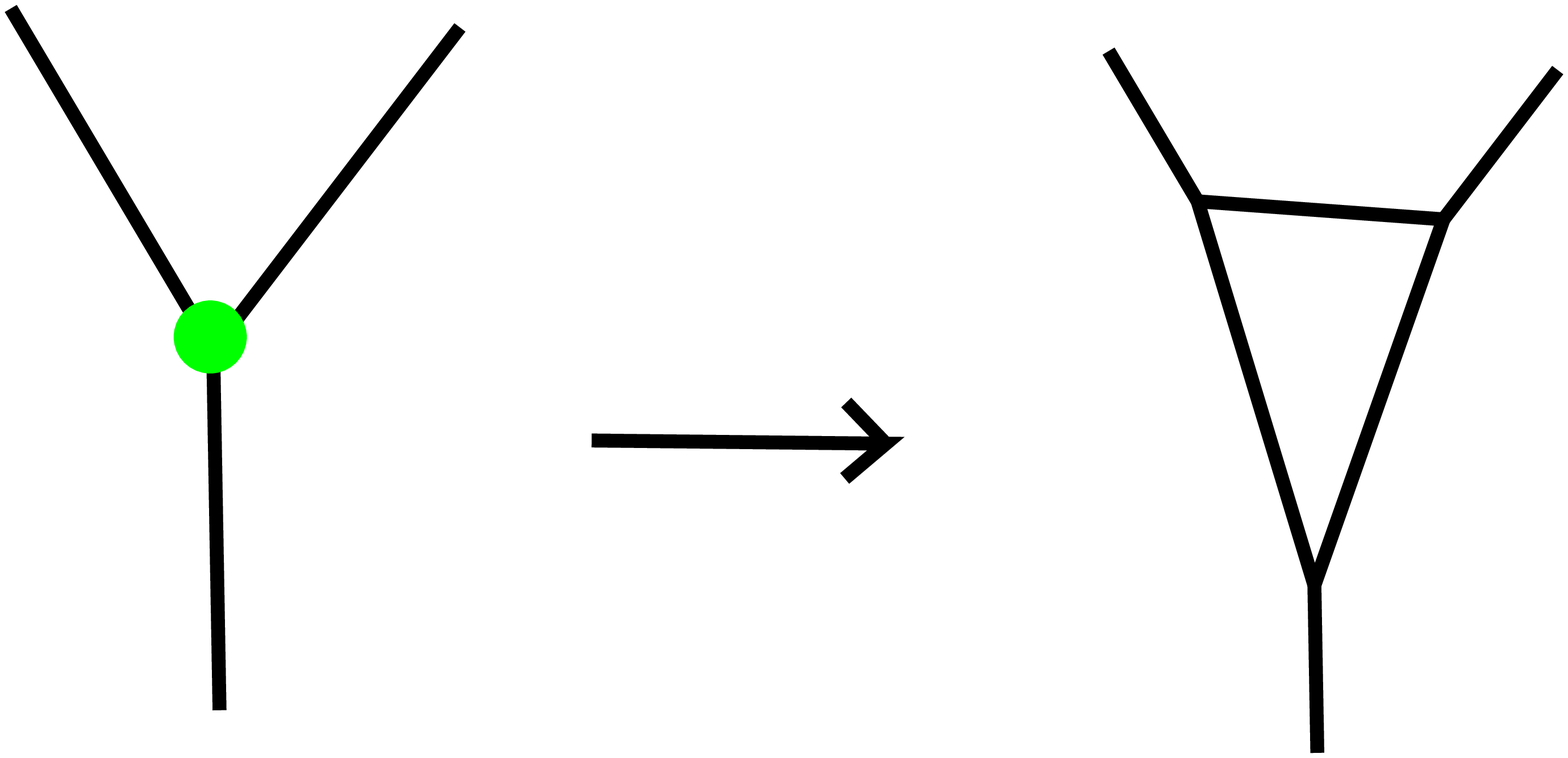}\qquad\includegraphics[scale=0.13]{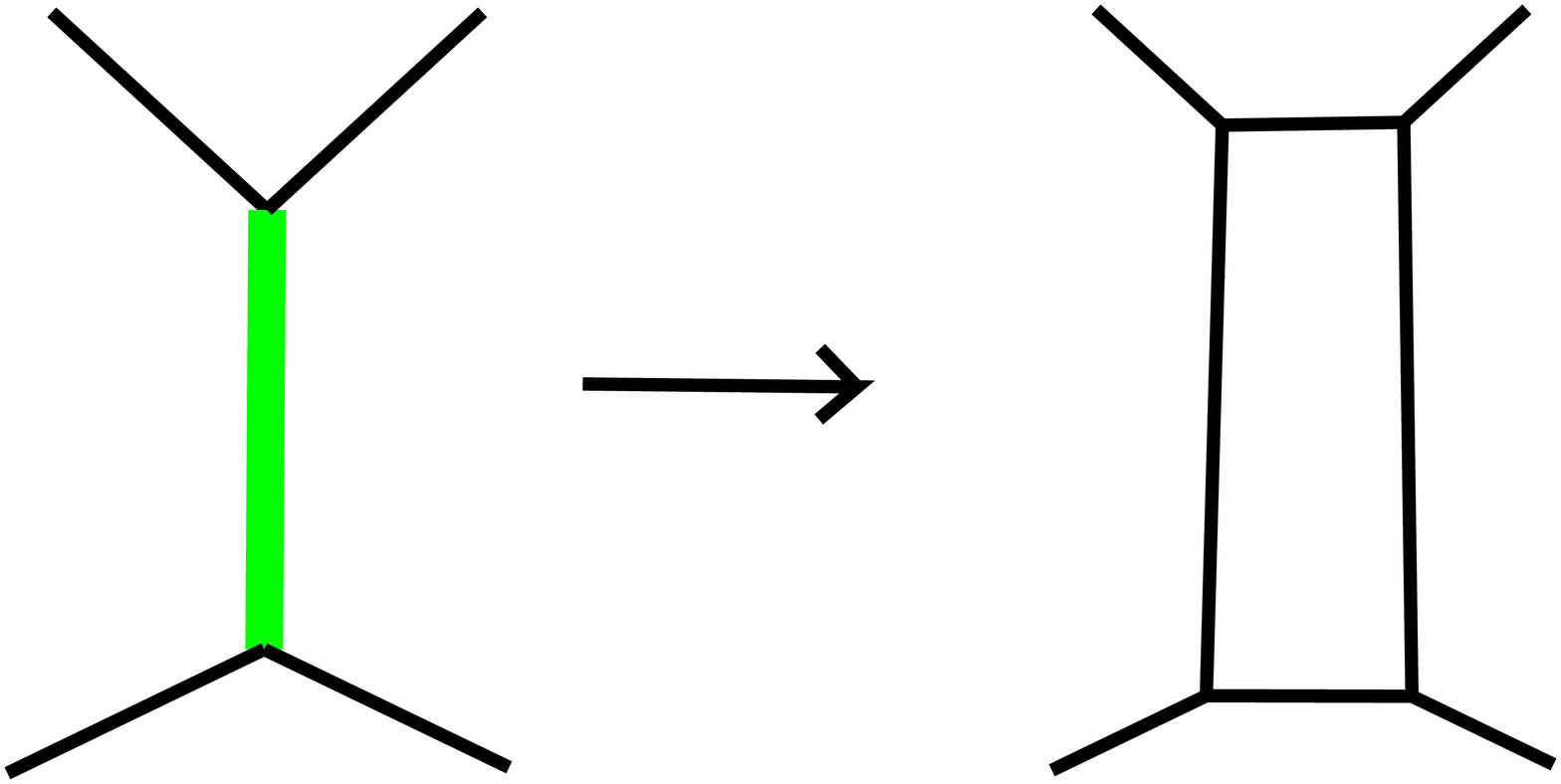}\qquad\includegraphics[scale=0.13]{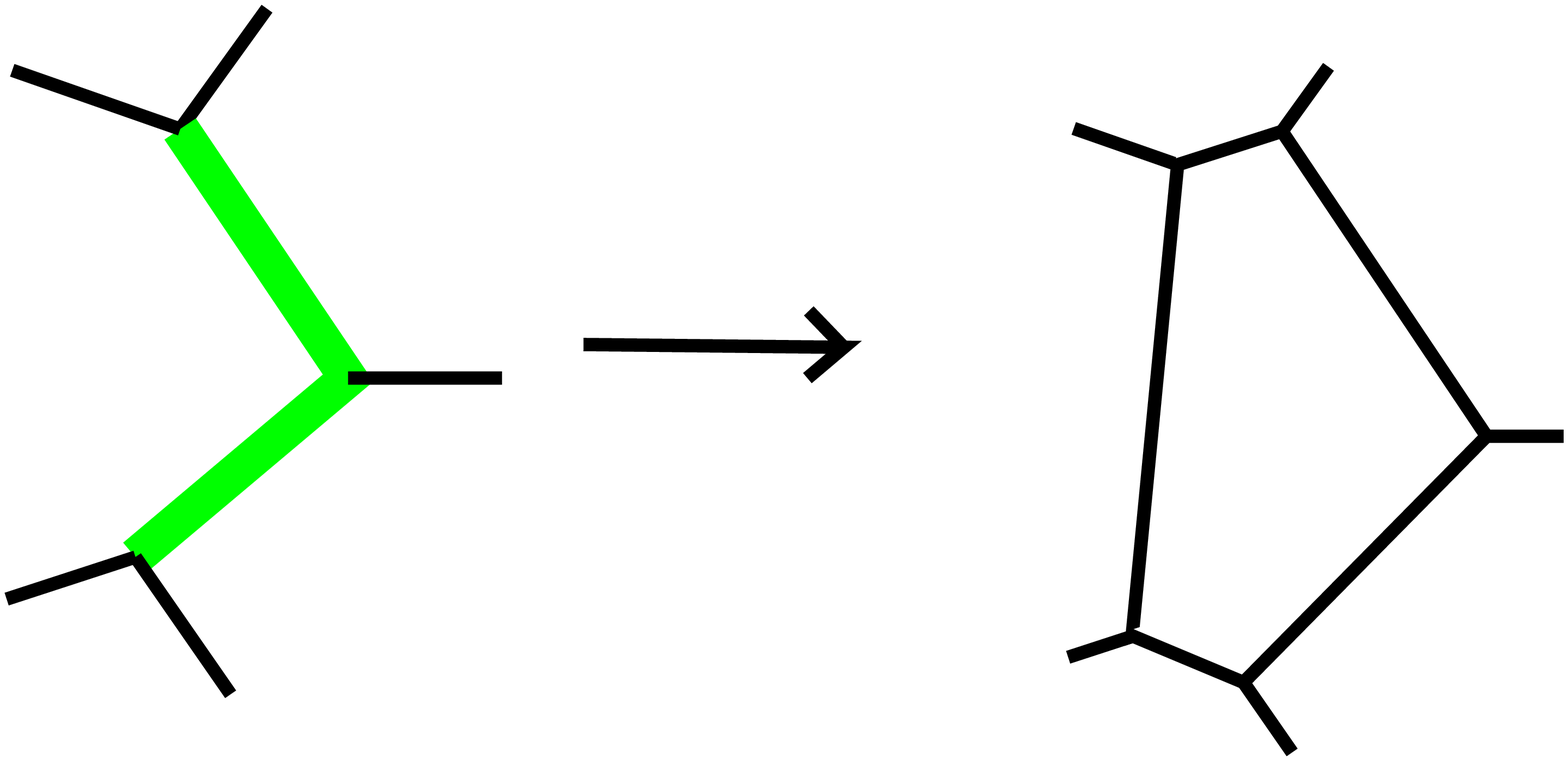}
\end{center}
\caption{Truncations of polytopes}\label{3cut}
\end{figure}

Recall some terminology from the graph theory.
\begin{defin}
A graph is called \emph{simple} if it has no loops and multiple edges.

A \emph{path} in a graph $\Gamma$ is a sequence of vertices $(\ib{v}_1,\dots,
\ib{v}_l)$ with $\ib{v}_i$ and $\ib{v}_{i+1}$ being connected by an edge for
$i=1,\dots,l-1$.

A \emph{cycle} is a closed path $(\ib{v}_1,\dots, \ib{v}_{l+1})$, where
$\ib{v}_{l+1}=\ib{v}_1$. We will denote a cycle by $(\ib{v}_1,\dots,
\ib{v}_l)$. A cycle is called \emph{simple} if it passes any vertex at most
once. A cycle is called \emph{non-chordal}, if it has no \emph{chords} --
edges of the graph, connecting non-successive vertices of a cycle.

A connected graph $\Gamma$ with at least $4$ edges is called
\emph{$3$-connected} if deletion of any one or any two vertices with all
incident edges gives a connected graph.

\emph{Graph $G(P)$} of a polytope $P$ is a graph of vertices and edges of
$P$. Paths and cycles in $G(P)$ we call \emph{edge-paths} and
\emph{edge-cycles}.
\end{defin}

Our main constructions are based on the following result.
\begin{theorem}[Steinitz, see \cite{Z07}]
A graph $\Gamma$ is a graph $G(P)$ of some $3$-polytope $P$ if and only if it
is simple, planar and $3$-connected.
\end{theorem}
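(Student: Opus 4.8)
The plan is to prove the two implications separately. \emph{Necessity} (if $\Gamma=G(P)$, then $\Gamma$ is simple, planar and $3$-connected) is routine. Simplicity is immediate: a convex polytope has no loops, and two distinct vertices lie on at most one common edge. For planarity I would pick a point $\ib q$ beyond exactly one facet $F$ of $P$ and project $\partial P$ from $\ib q$ onto the plane of $F$; this Schlegel diagram is a straight-line planar drawing of $\Gamma$ in which $F$ bounds the outer region. For $3$-connectedness I would invoke Balinski's theorem in dimension $3$: after deleting two vertices $\ib u,\ib v$, choose a non-constant affine functional $\ell$ with $\ell(\ib u)=\ell(\ib v)$; from every other vertex the monotone edge-paths of increasing, resp.\ decreasing, $\ell$ lead to the $\ell$-maximal, resp.\ $\ell$-minimal, vertex without meeting $\ib u$ or $\ib v$, and an edge-path inside the top facet links everything up, so $G(P)\setminus\{\ib u,\ib v\}$ stays connected.

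\emph{Sufficiency} is the real content, and I would establish it by the rubber-band (Tutte) construction together with Maxwell--Cremona lifting. First note that, since $\Gamma$ is planar and $3$-connected, its facial cycles are determined purely combinatorially — they are exactly the non-chordal cycles that do not separate $\Gamma$ — so speaking of choosing a face is meaningful. I would fix one facial cycle $F_0$, realize it as a convex polygon in the plane $z=0$, and place every other vertex at the barycentre of its neighbours; the resulting linear system is governed by the reduced graph Laplacian, which is nonsingular because $\Gamma$ is connected, so the placement is unique. By Tutte's spring theorem the drawing so obtained is a genuine straight-line embedding, all of whose bounded faces are nondegenerate convex polygons. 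I would then apply the Maxwell--Cremona correspondence: the equilibrium of the spring model is a self-stress whose coefficients on the interior edges all have the same sign, and such a self-stress lifts the planar subdivision to a piecewise-linear surface that is strictly convex across every interior edge. Lifting, then closing off the surface by the flat polygon $F_0$ and passing to the convex hull, yields a $3$-polytope $P$ whose facets are the lifted bounded faces together with $F_0$; hence $G(P)\cong\Gamma$.

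I expect the main obstacle to be Tutte's theorem itself — that the barycentric placement is nondegenerate (no two vertices coincide, no face collapses) and that the faces do not overlap. This is precisely where $3$-connectedness is indispensable: for a generic direction $\ib\omega$ one examines the vertices minimizing $\langle\ib\omega,\cdot\rangle$ and argues that, were this minimizing set not a single vertex or a single boundary edge of $F_0$, one could disconnect $\Gamma$ by deleting two vertices. A second, smaller obstacle is checking that the one-signed self-stress really does give a convex lift with no spurious coplanarities forcing distinct facets to merge; a perturbation argument handles the genericity issues. As a fallback I would use Steinitz's original inductive scheme: reduce any $3$-connected planar graph other than $K_4$ to a smaller one by one of Steinitz's reductions (a suitable edge contraction, or a $\Delta$--$Y$/$Y$--$\Delta$ move), realize the smaller polytope by induction (with $K_4$ realized by the tetrahedron), and perform the inverse move geometrically as a local convex modification — essentially a shallow truncation undoing a vertex split; there the delicate points are the termination of the reductions and the preservation of convexity at each step.
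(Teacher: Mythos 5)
The paper offers no proof of Steinitz's theorem: it is stated with a reference to Ziegler's book \cite{Z07} and used as a black box (for instance in justifying Lemma~\ref{ras}). So there is no in-paper argument to compare against; what matters is whether your sketch is sound, and it is.

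Your necessity direction is the standard one: Schlegel projection from a point beyond a facet gives planarity, and Balinski's monotone-path argument gives $3$-connectedness. One small omission there: you should take $\ell$ generic subject to $\ell(\ib u)=\ell(\ib v)$, so that every remaining vertex is strictly above or strictly below the level set; otherwise the monotone paths are not quite well-defined. For sufficiency, the Tutte-embedding-plus-Maxwell--Cremona route is correct, and your appeal to Whitney's theorem (the facial cycles of a $3$-connected planar graph are exactly the non-separating induced cycles) is precisely what turns ``choose an outer face'' into a combinatorial rather than topological choice, so keep it explicit. You have also correctly located where the real work lies: (i) nondegeneracy and convexity of the Tutte drawing, where $3$-connectedness enters via the argument that a linear-functional-minimizing set cannot be a nontrivial separator; and (ii) passing from the equilibrium self-stress (positive on interior edges, negative on the boundary cycle) to a piecewise-linear lift that is strictly convex across every interior edge, so that distinct faces lift to distinct facets and gluing the flat outer polygon underneath closes up a genuine $3$-polytope with graph $\Gamma$. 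Your fallback via Steinitz's own reductions and $\Delta$--$Y$ / $Y$--$\Delta$ moves is an equally valid, and historically the original, route; there the care goes into termination of the reduction sequence and realizing each inverse reduction as a local convexity-preserving modification. Either approach suffices; the paper simply elects to cite the theorem rather than reprove it.
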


We will implicitly use the following version of the Jordan curve theorem,
which can be proved rather directly.

\begin{theorem}\label{Jordan-Th}
Let $P$ be a simple $3$-polytope, and $\gamma$  be a simple edge-cycle. Then
\begin{enumerate}
\item $\partial P\setminus\gamma$ consists of  two connected components
    $\m{C}_1$ and~ $\m{C}_2$.
    \item Let $\mathcal{D}_\a=\{F_j\in\F \colon \int F_j\subset
        \m{C}_{\a}\}\subset \F$, $\a=1,2$. Then $\m{D}_1\sqcup \m{D}_2=\F$.
\item Closure $\overline{\m{C}_\a}$ is homeomorphic to a disk. We have
    $\overline{\m{C}_\a}=|\m{D}_\a|$.
\end{enumerate}
\end{theorem}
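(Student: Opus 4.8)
The plan is to reduce everything to the classical Jordan curve theorem on the 2-sphere. Since $P$ is a simple $3$-polytope, its boundary $\partial P$ is homeomorphic to $S^2$ via a radial projection from an interior point, and under this homeomorphism a simple edge-cycle $\gamma$ becomes a simple closed curve (a piecewise-linear Jordan curve) on $S^2$. The classical Jordan curve theorem then gives that $S^2\setminus\gamma$, hence $\partial P\setminus\gamma$, has exactly two connected components $\m{C}_1$ and $\m{C}_2$, each open, with common boundary $\gamma$. This settles part (1).

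For part (2), I would argue that the relative interior of each facet $F_j$ lies entirely in one component. Indeed, $\int F_j$ is connected and disjoint from $\gamma$ (the curve $\gamma$ consists of edges, so it meets $F_j$ only in $\partial F_j$), so $\int F_j$ is contained in a single component $\m{C}_\a$; this well-defines the map $F_j\mapsto \m{D}_\a$. Conversely every point of $\m{C}_\a$ lies in $\partial P=\bigcup_i F_i$; a point in the relative interior of some facet puts that facet in $\m{D}_\a$, and a point on an edge or vertex is a limit of interior points of the incident facets, which must then also lie in $\m{C}_\a$ since $\m{C}_\a$ is open in $\partial P$ — wait, rather: since $\m{C}_\a$ is a component and the incident facets' interiors are connected sets meeting $\overline{\m{C}_\a}$, one checks they lie in $\m{C}_\a$ unless they are separated by $\gamma$. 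The clean way: show $|\m{D}_\a|=\overline{\m{C}_\a}$ directly (this is part (3) anyway), and then $\m{D}_1\sqcup\m{D}_2=\F$ because $|\m{D}_1|\cup|\m{D}_2|=\overline{\m{C}_1}\cup\overline{\m{C}_2}=\partial P$ covers every facet, while $|\m{D}_1|\cap|\m{D}_2|\subset\overline{\m{C}_1}\cap\overline{\m{C}_2}=\gamma$ contains no facet interior, so no facet lies in both $\m{D}_1$ and $\m{D}_2$.

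For part (3), the Schoenflies theorem in the PL category (or Jordan--Schoenflies for $S^2$) says that each closed complementary region $\overline{\m{C}_\a}$ of a Jordan curve on $S^2$ is homeomorphic to a closed disk. It remains to identify $\overline{\m{C}_\a}$ with $|\m{D}_\a|$. The inclusion $|\m{D}_\a|\subset\overline{\m{C}_\a}$ holds because for $F_j\in\m{D}_\a$ we have $\int F_j\subset\m{C}_\a$, hence $F_j=\overline{\int F_j}\subset\overline{\m{C}_\a}$. For the reverse inclusion: $\m{C}_\a\subset\partial P=\bigcup F_i$, and a point of $\m{C}_\a$ either lies in some $\int F_i$ — forcing $F_i\in\m{D}_\a$, hence the point is in $|\m{D}_\a|$ — or lies in some edge or vertex, which is contained in a facet whose interior, by local connectedness of $\partial P$ near that point and openness of $\m{C}_\a$, meets $\m{C}_\a$; that facet is then in $\m{D}_\a$. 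Taking closures, $\overline{\m{C}_\a}\subset\overline{|\m{D}_\a|}=|\m{D}_\a|$ since $|\m{D}_\a|$ is a finite union of compact sets, hence closed.

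The main obstacle is the bookkeeping in part (2)/(3): ensuring that no facet straddles both sides, i.e. that $\gamma$ being an \emph{edge}-cycle (not an arbitrary curve) prevents a facet interior from being split. This is exactly where simplicity and the combinatorial structure enter — $\gamma\subset\partial P$ is a union of whole edges, so it never cuts through the interior of a $2$-face — and the argument is genuinely clean once this is observed; the topological input (Jordan and Jordan--Schoenflies on $S^2$) is standard and, as the paper notes, "can be proved rather directly" in this PL setting by induction on the number of edges of $\gamma$.
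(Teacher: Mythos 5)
The paper does not actually prove Theorem \ref{Jordan-Th}: it is stated with only the remark that it ``can be proved rather directly,'' and is then used implicitly. So there is no in-paper argument to compare against. Your proposal is correct. Reducing to the round sphere via radial projection from an interior point, invoking the classical Jordan curve theorem for part (1), and invoking Jordan--Schoenflies for the disk statement in part (3) is the natural topological route, and your bookkeeping is sound: since $\gamma$ lies in the $1$-skeleton, each $\int F_j$ is connected and disjoint from $\gamma$, hence lies entirely in one $\m{C}_\a$, which both well-defines the partition $\F=\m{D}_1\sqcup\m{D}_2$ and (together with the local argument at interior points of edges, and at vertices, off $\gamma$, using openness of $\m{C}_\a$) gives $\m{C}_\a\subset|\m{D}_\a|$; closedness of the finite union $|\m{D}_\a|$ then yields $\overline{\m{C}_\a}\subset|\m{D}_\a|$, and the reverse inclusion is immediate from $F_j=\overline{\int F_j}$. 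The one cosmetic remark: the authors' ``can be proved rather directly'' plainly refers to this theorem itself, presumably via a short PL induction on the number of edges of $\gamma$ that avoids quoting the full Schoenflies theorem; you read it as endorsing the Jordan/Schoenflies input. Either way, citing PL Jordan--Schoenflies is a perfectly standard and valid shortcut, so the proof stands.
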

\section{Belts on fullerenes}\label{Belts}
\begin{defin}
Let $P$ be a simple polytope.

A \emph{thick path} is a sequence of facets $(F_{i_1},\dots,F_{i_k})$, such
that $F_{i_j}\cap F_{i_{j+1}}$ is an edge for $j=1,\dots,k-1$.

A \emph{$k$-loop}, $k\geqslant 2$, is a closed thick path
$\mathcal{L}=(F_{i_1},\dots,F_{i_k}, F_{i_{k+1}})$, $F_{i_{k+1}}=F_{i_1}$. We
denote it simply by $(F_{i_1},\dots, F_{i_k})$. By definition a $1$-loop $\L$
consists of one facet $F_{i_1}$. A $k$-loop is \emph{simple}, if it consist
of pairwise different facets.
\end{defin}
\begin{defin}
Let $\gamma$ be a simple edge-cycle. Walking round $\gamma$ in $\C_{\a}$ we
obtain an $l_\a$-loop $\L_\a\subset \D_\a$, $\a\in\{1,2\}$. We say that
$\L_\a$ \emph{borders} the cycle $\gamma$.

An $l_1$-loop $\L_1=(F_{i_1},\dots,F_{i_{l_1}})$ \emph{borders} an $l_2$-loop
$\L_2=(F_{j_1},\dots,F_{j_{l_2}})$, if they border the same edge-cycle
$\gamma$.  If $l_2=1$, then $\L_1$ \emph{surrounds} a facet~$F_{j_1}$.

Let $F_{i_p}\in\L_1$ has $a^1_p$ edges in common with $\gamma$, and
$F_{j_q}\in\L_2$ has $a^2_q$ edges in common with $\gamma$.
\end{defin}
\begin{lemma}\label{b-lemma}
If $\L_1$ borders $\L_2$, then one of the following holds:
\begin{enumerate}
\item $\L_\a$ is a $1$-loop, and $\L_\b$ is an $a^\b_1$-loop,
    $a^\b_1\geqslant 3$, for some $\a\in\{1,2\}$, and
    $\b=\{1,2\}\setminus\{\a\}$;
\item $l_1,l_2\geqslant 2$, and
    $l_\a=\sum_{r=1}^{l_\b}(a^\b_r-1)=\sum_{r=1}^{l_\b}a^\b_r-l_\b$
for any $\a\in\{1,2\}$,~$\b=\{1,2\}\setminus\{\a\}$.
\end{enumerate}
\end{lemma}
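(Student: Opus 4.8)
The plan is to analyze the combinatorial structure of the edge-cycle $\gamma$ together with the two loops $\L_1,\L_2$ bordering it, using the Jordan curve theorem (Theorem \ref{Jordan-Th}) to control how the facets of each loop meet $\gamma$. The key local fact I would establish first is that each facet $F_{i_p}\in\L_1$ intersects $\gamma$ in a single edge-path (a connected arc of $\gamma$), not in several disjoint arcs. This follows because $F_{i_p}$ is a $2$-face of a simple polytope, hence a disk whose boundary is a simple edge-cycle; if it met $\gamma$ in two disjoint arcs, then walking along $\partial F_{i_p}$ one would produce a simple edge-cycle crossing from $\C_1$ to $\C_2$ and back in a way incompatible with $\overline{\C_\a}$ being a disk — equivalently, $F_{i_p}$ would have interior points assigned to both $\D_1$ and $\D_2$ after a suitable small deformation, contradicting $\D_1\sqcup\D_2=\F$. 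So $a^1_p$ counts the edges of one connected arc of $\gamma$ lying in $\partial F_{i_p}$, and consecutive facets of $\L_1$ share exactly one vertex of $\gamma$ between their arcs.

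Next I would set up the counting. Suppose first $\L_\a$ is a $1$-loop, say $\a=1$ with $\L_1=(F_{i_1})$. Then $\gamma=\partial F_{i_1}$, so $\gamma$ is the boundary of a single facet, which is an $s$-gon with $s\geqslant 3$ (every facet of a simple $3$-polytope has at least $3$ edges). Walking round $\gamma$ on the $\C_2$ side, the facets of $\L_2$ tile the arc: each $F_{j_q}$ contributes its arc of $a^2_q$ edges, and since $\gamma$ has $s=a^1_1$ edges total while consecutive arcs overlap in a vertex but not an edge, we get $\sum_q a^2_q = $ (number of edges of $\gamma$) $+$ (number of shared vertices); but in the $1$-loop case one must check that $\L_2$ can be a $1$-loop only if $s\geqslant 3$ — if $\L_2=(F_{j_1})$ too, then $P$ has only two facets, impossible — so actually the statement as phrased says $\L_\b$ is an $a^\b_1$-loop with $a^\b_1\geqslant 3$, meaning every facet of $\L_2$ meets $\gamma$ in exactly one edge and $l_2 = s = a^1_1 \geqslant 3$. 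I would verify this by noting that a facet on the $\C_2$ side sharing two edges with $\partial F_{i_1}$ would force those two edges to be adjacent in $F_{i_1}$ and the common vertex to have degree $2$, contradicting simplicity (degree $3$). Hence each $F_{j_q}$ meets $\gamma$ in one edge, giving $l_2$ facets for $s$ edges, i.e. $l_2 = a^1_1 \geqslant 3$.

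For the generic case $l_1,l_2\geqslant 2$: I count the edges of $\gamma$ in two ways. Going around $\L_2=(F_{j_1},\dots,F_{j_{l_2}})$, the arcs $\partial F_{j_r}\cap\gamma$ partition the edge set of $\gamma$ (each edge of $\gamma$ lies on exactly one facet of $\D_2$, namely exactly one $F_{j_r}$), so the number of edges of $\gamma$ equals $\sum_{r=1}^{l_2} a^2_r$. On the other hand, going around $\L_1$, consecutive facets $F_{i_p},F_{i_{p+1}}$ share an edge $e_p$ of $P$; I claim $e_p$ has exactly one endpoint on $\gamma$. Indeed $e_p$ is not an edge of $\gamma$ (it lies in $\C$-interior, being shared by two facets on the same side), and its two endpoints on $\partial P$ near $\gamma$ cannot both lie on $\gamma$ unless $e_p$ is a chord — but then $\L_1$ would not border $\gamma$ minimally, or more carefully, the shared vertex structure forces exactly one endpoint on $\gamma$ because each vertex of $\gamma$ on the $\C_1$ side belongs to exactly two facets of $\D_1$. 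So the $l_1$ shared edges of $\L_1$ correspond bijectively to the $l_1$ vertices of $\gamma$; but $\gamma$ has as many vertices as edges, so $\gamma$ has $l_1$ edges counted this way is wrong — rather, the vertices of $\gamma$ are partitioned into those "interior" to some arc $\partial F_{i_p}\cap\gamma$ and those shared between consecutive arcs, and the shared ones number $l_1$. Since arc $\partial F_{i_p}\cap\gamma$ has $a^1_p$ edges hence $a^1_p+1$ vertices, and each arc shares one vertex with each neighbor, the total vertex count of $\gamma$ is $\sum_p (a^1_p+1) - l_1 = \sum_p a^1_p$. Thus $\#\{\text{edges of }\gamma\} = \#\{\text{vertices of }\gamma\} = \sum_p a^1_p - 0$? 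No: number of edges of $\gamma$ equals number of vertices of $\gamma$, and counting via $\L_1$ arcs gives $\sum_p a^1_p$ edges (arcs share vertices not edges), so $\#\{\text{edges}\} = \sum_p a^1_p$; meanwhile $\#\{\text{vertices}\} = \sum_p(a^1_p+1) - l_1 = \sum_p a^1_p + l_1 - l_1 = \sum_p a^1_p$. Consistent. Now the same count from the $\L_2$ side gives $\#\{\text{edges}\} = \sum_r a^2_r$. Finally I relate $l_2$ to $\L_1$-data: the $l_1$ "junction vertices" of $\gamma$ (endpoints of shared edges of $\L_1$) are precisely the vertices of $\gamma$ that are NOT interior to an $\L_2$-arc — no, I need the right bookkeeping, which is that at a junction vertex $v$ of $\L_1$ (where $F_{i_p},F_{i_{p+1}}$ meet), on the $\C_2$ side the facets of $\L_2$ incident to $v$ along $\gamma$ also make a junction there, because $v$ has degree $3$ and its third edge goes into $\C_2$. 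So junction vertices of $\L_1$ and of $\L_2$ coincide, and there are $l_1 = l_2$ of them — wait, that would say $l_1 = l_2$ always, which is false (Endo-Kroto!). The correct statement: junction vertices of $\L_1$ are exactly vertices of $\gamma$ whose third edge (the one not on $\gamma$) lies in $\C_1$; junction vertices of $\L_2$ are those whose third edge lies in $\C_2$; every vertex of $\gamma$ has its third edge in exactly one of $\C_1,\C_2$; so $\#\{\text{vertices of }\gamma\} = l_1 + l_2$. Combining with $\#\{\text{vertices}\} = \#\{\text{edges}\} = \sum_r a^2_r$, we get $l_1 + l_2 = \sum_r a^2_r$, i.e. $l_1 = \sum_{r=1}^{l_2} a^2_r - l_2 = \sum_{r=1}^{l_2}(a^2_r - 1)$, and symmetrically $l_2 = \sum_{r=1}^{l_1}(a^1_r - 1)$.

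The main obstacle will be the careful verification of the local claims — that each facet meets $\gamma$ in a single connected arc, and that each vertex of $\gamma$ has exactly one "outgoing" edge lying in $\C_1$ or $\C_2$ with the side of that edge determining which loop has a junction there — all of which rest on simplicity (degree-$3$ vertices) and on Theorem \ref{Jordan-Th}(3) that $\overline{\C_\a}$ is a disk. Once those are pinned down, the two edge-counts and the vertex-partition $\#\{\text{vert }\gamma\} = l_1+l_2$ close the argument immediately, and the $1$-loop degeneracy is handled separately as above by ruling out (via simplicity and $f_2\geqslant 4$) the sub-case where the bordering loop is also trivial, forcing $a^\b_1\geqslant 3$.
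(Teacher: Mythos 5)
Your argument is correct and takes essentially the same route as the paper's short proof: both rest on the observation that at each vertex of $\gamma$ the unique edge not lying on $\gamma$ points into exactly one of $\C_1,\C_2$, so that passing from one edge of $\gamma$ to the next switches facets in exactly one of $\L_1,\L_2$; counting these switches (you count them as vertices of $\gamma$, the paper as consecutive-edge pairs, which is the same thing) gives $l_\a=\sum_r(a^\b_r-1)$. Your handling of the degenerate $1$-loop case likewise matches the paper's, using simplicity (degree-$3$ vertices) to force each facet of the bordering loop to meet $\gamma$ in a single edge.
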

\begin{proof}
If $l_2=1$, then $\gamma$ is the boundary edge-cycle of $F_{j_1}$, successive
edges of $\gamma$ belong to different facets of $\L_1$, and $l_1=a^2_1$.
Similarly for $l_1=1$.

Let $l_1,l_2\geqslant 2$. Any edge of $\gamma$ is an intersection of a facet
of $\L_1$ with a facet of $\L_2$. The successive edges in $\gamma$ belong to
the same facet in $\L_\a$ if and only if they belong to successive facets in
$\L_\b$, hence $l_\a=\sum_{r=1}^{l_\b}(a^\b_r-1)=\sum_{r=1}^{l_\b}-l_\b$.
\end{proof}

\begin{defin}
A \emph{$k$-belt}, $k\geqslant 3$, is a $k$-loop $\B=(F_{i_1},\dots,F_{i_k})$
such that
\begin{itemize}
\item $F_{i_1}\cap F_{i_2}\cap F_{i_3}=\varnothing$, if $k=3$;
\item $F_{i_p}\cap F_{i_q}\ne\varnothing$ if and only if
    $(p,q)\in\{(1,2),(2,3),\dots,(k-1,k),(k,1)\}$, if $k\geqslant 4$. .
\end{itemize}
A simple $3$-polytope $P\ne\Delta^3$ without $3$-belts is called a
\emph{flag} $3$-polytope.
\end{defin}
\begin{rem}
For the simplicial polytope $P^*$ dual to the simple polytope $P$ the notion
of a $k$-belt corresponds to a non-chordal $k$-cycle that does not surround a
triangular facet.
\end{rem}

\begin{figure}[h]
\includegraphics[height=5cm]{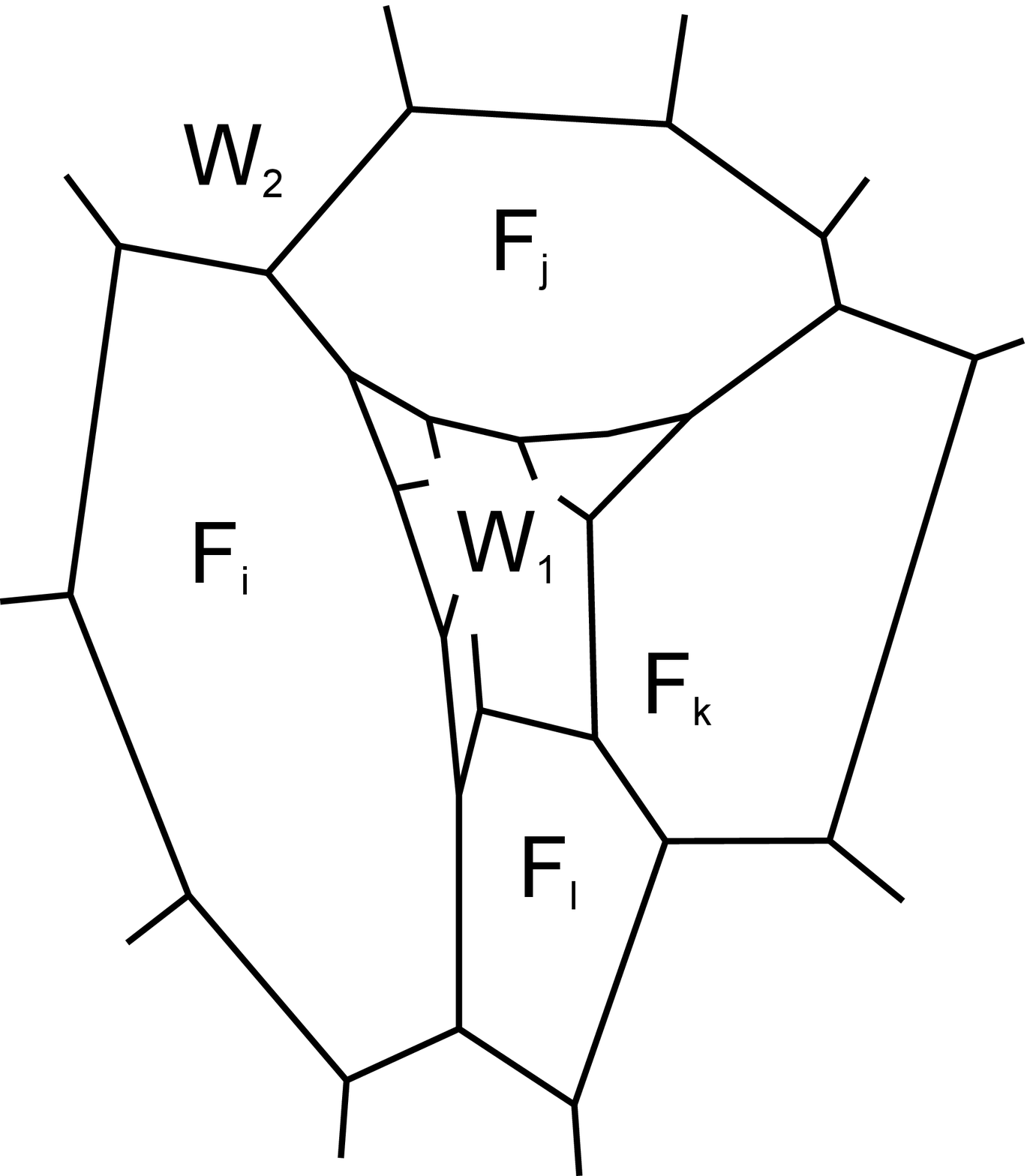}
\caption{$4$-belt}\label{4Belt-pic}
\end{figure}
\begin{lemma}\label{belt-lemma}
Let $\mathcal{B}=(F_{i_1},\dots,F_{i_k})$ be a $k$-belt. Then
\begin{enumerate}
\item  $|\B|$ is homeomorphic to a cylinder;
\item $\partial|\B|$ consists of two simple edge-cycles $\gamma_1$ and
    $\gamma_2$.
\item $\partial P\setminus |\B|$ consists of two connected components
    $\m{P}_1$ and $\m{P}_2$.
\item  Let $\W_\a=\{F_j\in\F \colon \int F_j\subset \m{P}_{\a}\}\subset
    \F$, $\a=1,2$. Then $\W_1\sqcup \W_2\sqcup \B=\F$.
\item $\overline{\P_\a}=|\W_\a|$ is homeomorphic to a disk, $\a=1,2$.
\item $\partial \P_\a=\partial \overline{\P_\a}=\gamma_\a$, $\a=1,2$.
\end{enumerate}
\end{lemma}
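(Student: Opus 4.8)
The plan is to reduce everything to the Jordan curve theorem (Theorem~\ref{Jordan-Th}) applied to one of the two boundary edge-cycles of $|\B|$, together with the combinatorics of the $k$-belt condition. First I would show that $|\B|$ is a cylinder. Each facet $F_{i_j}$ of the belt is a disk; consecutive facets $F_{i_j}, F_{i_{j+1}}$ are glued along the single edge $e_j = F_{i_j}\cap F_{i_{j+1}}$, and non-consecutive facets are disjoint (for $k\ge 4$; for $k=3$ the condition $F_{i_1}\cap F_{i_2}\cap F_{i_3}=\varnothing$ plus simplicity of $P$ forces the three pairwise intersections to be distinct edges, so the same local picture holds). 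Hence $|\B|$ is obtained from $k$ disks by gluing them cyclically along $k$ disjoint boundary edges, which is manifestly a cylinder (annulus). Its boundary is two disjoint simple closed curves in the edge graph of $P$: going around the belt, on each facet $F_{i_j}$ the boundary $\partial F_{i_j}$ is split by the two gluing edges $e_{j-1}, e_j$ into two edge-arcs, one contributing to $\gamma_1$ and one to $\gamma_2$; concatenating these arcs around the belt gives the two cycles. I must check these are \emph{simple}: a repeated vertex on $\gamma_\a$ would have to be a vertex of two non-consecutive belt facets, contradicting $F_{i_p}\cap F_{i_q}=\varnothing$ for $|p-q|\ge 2 \pmod k$ (the $k=3$ case again handled by the flag-type condition). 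This proves (1) and (2).

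Next, for (3)--(6), I would apply Theorem~\ref{Jordan-Th} to the simple edge-cycle $\gamma_1$. It gives $\partial P\setminus\gamma_1 = \m{C}_1\sqcup\m{C}_2$ with closures that are disks covered by the facet-sets $\m{D}_1,\m{D}_2$. Since $|\B|$ is connected and $|\B|\setminus\gamma_1$ is connected (the cylinder minus one boundary circle is again connected) and misses $\gamma_1$, it lies entirely in one component, say $\m{C}_1$; equivalently every belt facet lies in $\m{D}_1$. The other component $\m{C}_2$ is then disjoint from $|\B|$, so I set $\P_2 = \m{C}_2$, $\W_2=\m{D}_2$; by Theorem~\ref{Jordan-Th}(3), $\overline{\P_2}$ is a disk equal to $|\W_2|$ and its boundary is $\gamma_1=\gamma_2$ in my labeling (I will fix the indexing so that the curve bounding $\P_2$ is called $\gamma_2$). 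For the other side: $\overline{\m{C}_1}$ is a disk containing the cylinder $|\B|$ with $\gamma_1$ as its whole boundary; removing from $\overline{\m{C}_1}$ the half-open cylinder $|\B|\setminus\gamma_2$ leaves a closed region whose boundary is $\gamma_2$. I then define $\P_1$ to be its interior, $\W_1 = \m{D}_1\setminus\B$, and argue this region is a disk — for instance by running Theorem~\ref{Jordan-Th} a second time, now with the cycle $\gamma_2$, which separates $\partial P$ into two disks, one of which contains $|\B|$ and the other is $\overline{\P_1}$. That the three facet-families $\W_1,\W_2,\B$ are pairwise disjoint and exhaust $\F$ follows from $\m{D}_1\sqcup\m{D}_2=\F$ together with $\B\subset\m{D}_1$, $\W_1=\m{D}_1\setminus\B$, $\W_2=\m{D}_2$; this gives (4). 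Statements (5) and (6) are then exactly what the two applications of the Jordan theorem deliver, once we know $\overline{\P_\a}=|\W_\a|$, which again is Theorem~\ref{Jordan-Th}(3).

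The technical heart — and the step I expect to cause the most friction — is the $k=3$ case and, more generally, the verification that the two boundary curves are \emph{simple non-self-touching} edge-cycles and that the cylinder $|\B|$ actually sits inside one Jordan component rather than straddling $\gamma_1$. For $k=3$ one cannot invoke "non-consecutive facets are disjoint'' because all pairs are consecutive; instead I would argue directly: in a simple $3$-polytope any two facets meet in at most one edge, and three facets with empty common intersection but pairwise nonempty intersections meet pairwise in three distinct edges forming no common vertex, so $|\B|$ is again three disks glued along three disjoint edges, i.e.\ a cylinder, with both boundary cycles being triangles (after the gluings, each boundary circle passes through the three outer vertices of the three facets). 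The simplicity of these curves and the fact that $|\B|\setminus\gamma_1$ lands in one component both come down to the observation that a vertex lying on $\gamma_1$ and also in the "interior side'' would force an extra incidence among belt facets contradicting the belt axioms; making this rigorous requires a careful local analysis around each vertex of $\gamma_\a$, using that $P$ is simple so each vertex has exactly three incident facets and three incident edges. Everything else is bookkeeping with Theorem~\ref{Jordan-Th}.
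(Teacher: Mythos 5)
Your proof is correct and takes the same route the paper intends: the paper's own proof is simply the remark that the lemma ``is straightforward using Theorem~\ref{Jordan-Th},'' and your argument carries out exactly that reduction, supplying the combinatorial details (cylinder structure via gluing $k$ disks along $k$ pairwise disjoint edges, simplicity of the two boundary cycles, the separate treatment of $k=3$, and the two applications of the Jordan theorem to $\gamma_1$ and $\gamma_2$) that the paper leaves implicit. Your caution about the $k=3$ case and about the belt landing on one side of $\gamma_1$ is exactly the right level of care, and both checks go through as you describe.
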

The proof is straightforward using Theorem \ref{Jordan-Th}.

Let a facet $F_{i_j}\in\B$ has $\alpha_j$ edges in $\gamma_{\alpha}$ and
    $\beta_j$ edges in $\gamma_{\beta}$, $\b=\{1,2\}\setminus\{\a\}$.\\
If $F_{i_j}$ is an $s_{i_j}$-gon, then $\a_j+\b_j=s_{i_j}-2$.

\begin{lemma}\label{loop-lemma}
Let  $P$ be a simple polytope with $p_3=0$, $p_k=0$, $k\geqslant 8$,
$p_7\leqslant 1$, and let $\B_k$ be a $k$-belt consisting of $b_i$ $i$-gons,
$4\leqslant i\leqslant 7$. Then one of the following holds:
\begin{enumerate}
\item $\B_k$ surrounds two $k$-gonal facets $F_s: \{F_s\}=\W_1$, and
    $F_t: \{F_t\}=\W_2$,\\ and all facets of $\B_k$ are quadrangles;
\item $\B_k$ surrounds a $k$-gonal facet $F_s:\{F_s\}=\W_\a$, and borders
    an $l_\b$-loop $\L_\b\subset \W_\b$, $\beta=\{1,2\}\setminus\{\a\}$,
    $l_\b=p_5+2p_6+3p_7\geqslant 2$;
\item $\B_k$ borders an $l_1$-loop $\L_1\subset \W_1$ and an $l_2$-loop
    $\L_2\subset W_2$, where
    \begin{enumerate}
    \item  $l_\alpha=\sum_{j=1}^k(\a_j-1)\geqslant 2$, $\alpha=1,2$;
    \item $l_1+l_2=2k-2b_4-b_5+b_7\leqslant 2k+1$.
    \item $\min\{l_1,l_2\}\leqslant
        k-b_4-\lceil\frac{b_5-b_7}{2}\rceil\leqslant k$.
    \item If $b_7=0$, $l_1, l_2\geqslant k$, then $l_1=l_2=k$,
        $b_4=b_5=0$, $b_6=k$.
    \end{enumerate}
\end{enumerate}
If $l_\alpha=2$, then $\L_\a=\{F_i,F_j\}$, and $k=s_i+s_j-4\geqslant 4$.
\end{lemma}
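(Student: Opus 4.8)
\noindent\emph{Proof plan.}
The plan is to apply Lemma~\ref{b-lemma} to each of the two boundary edge-cycles of $\B_k$ and then read off the three cases from the four possible combinations. By Lemma~\ref{belt-lemma}, $\partial|\B_k|=\gamma_1\sqcup\gamma_2$ and $\overline{\P_\a}=|\W_\a|$ is a disk with boundary $\gamma_\a$. First I would observe that walking round $\gamma_\a$ inside $|\B_k|$ presents $\B_k$ itself as a $k$-loop bordering $\gamma_\a$, in which $F_{i_j}$ contributes its $\a_j\geq 1$ (consecutive) common edges with $\gamma_\a$, while walking round $\gamma_\a$ inside $\P_\a$ presents the loop $\L_\a\subset\W_\a$; these two loops border the same cycle, so Lemma~\ref{b-lemma} applies. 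Since $k\geq 3$, its first alternative can hold only with $\L_\a$ the $1$-loop, so for each $\a\in\{1,2\}$ one of the following holds: (A) $\L_\a=\{F_s\}$, in which case every edge of $\gamma_\a$ lies in $F_s$, hence $\gamma_\a=\partial F_s$, $\W_\a=\{F_s\}$, $s_s=k$ by Lemma~\ref{b-lemma}(1), and all $\a_j=1$; or (B) $l_\a\geq 2$ and $l_\a=\sum_{j=1}^k(\a_j-1)$ by Lemma~\ref{b-lemma}(2).

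Next I would combine the alternatives for $\a=1$ and $\a=2$, using $\a_j+\b_j=s_{i_j}-2$, $b_4+b_5+b_6+b_7=k$, and $b_7\leq p_7\leq 1$. Two sides of type (A) force $\a_j=\b_j=1$, hence $s_{i_j}=4$ for every $j$, and $\W_1=\{F_s\}$, $\W_2=\{F_t\}$ with $F_s,F_t$ being $k$-gons: this is conclusion (1). Side $\a$ of type (A) and side $\b$ of type (B) force $\a_j=1$, so $\b_j=s_{i_j}-3$ and $l_\b=\sum_{j=1}^k(s_{i_j}-4)=b_5+2b_6+3b_7\geq 2$, which is the value stated in (2): this is conclusion (2). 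Two sides of type (B) give (3a) immediately, and summing the two loop equalities yields
\[
l_1+l_2=\sum_{j=1}^k(s_{i_j}-2)-2k=2k-2b_4-b_5+b_7\leq 2k+1
\]
by $b_7\leq 1$, which is (3b); hence $\min\{l_1,l_2\}\leq\bigl\lfloor\tfrac{l_1+l_2}{2}\bigr\rfloor=k-b_4-\bigl\lceil\tfrac{b_5-b_7}{2}\bigr\rceil\leq k$, the last inequality because $b_4+\bigl\lceil\tfrac{b_5-b_7}{2}\bigr\rceil\geq 0$ whenever $b_7\leq 1$; this is (3c). Finally, if $b_7=0$ and $l_1,l_2\geq k$, then $l_1+l_2\geq 2k$ forces $2b_4+b_5\leq 0$, i.e. $b_4=b_5=0$, $b_6=k$, and $l_1=l_2=k$: this is (3d).

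For the last assertion, suppose a type-(B) side has $l_\a=2$, so $\L_\a=(F_i,F_j)$ with $i\neq j$. Convexity of $P$ makes the face $F_i\cap F_j$ a single edge $e$, and $e\notin\gamma_\a$ since every edge of $\gamma_\a$ lies in a facet of $\B_k$. The two vertices of $\gamma_\a$ where its $F_i$-arc meets its $F_j$-arc lie in $F_i\cap F_j$, hence are the endpoints of $e$; these two vertices split $\partial F_i$ into $e$ and an arc of $s_i-1$ edges, and the arc of $\partial F_i$ lying on $\gamma_\a$ is not $e$, so $F_i$ has $s_i-1$ edges on $\gamma_\a$, and likewise $F_j$ has $s_j-1$. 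Thus $F_i\cup F_j$ is a disk bounded by $\gamma_\a$, so $\W_\a=\{F_i,F_j\}$, and Lemma~\ref{b-lemma}(2) gives $k=(s_i-2)+(s_j-2)=s_i+s_j-4\geq 4$, since $p_3=0$ forces $s_i,s_j\geq 4$. I expect the whole argument to be essentially bookkeeping once Lemma~\ref{b-lemma} is in hand; the step requiring the most care is this last refinement, where one must use convexity to control $F_i\cap F_j$ and correctly track the transition vertices of $\gamma_\a$.
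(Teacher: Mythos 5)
Your proof is correct and takes essentially the same route as the paper's: apply Lemma~\ref{b-lemma} to the two boundary cycles of $\B_k$, read off cases (1)--(3) by arithmetic on $\a_j+\b_j=s_{i_j}-2$ and $b_7\leqslant 1$, and handle $l_\a=2$ by locating the two transition vertices on $F_i\cap F_j$ (your last paragraph usefully spells out the step $a^\a_r=s-1$ that the paper's one-line ``follows from Lemma~\ref{b-lemma}'' leaves implicit). One small remark: the quantity $p_5+2p_6+3p_7$ in case~(2) of the lemma statement should read $b_5+2b_6+3b_7$ (the counts within the belt, not the global ones), which is exactly what you --- and the paper's own proof --- actually derive.
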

\begin{figure}[h]
\begin{tabular}{c}
\includegraphics[scale=0.3]{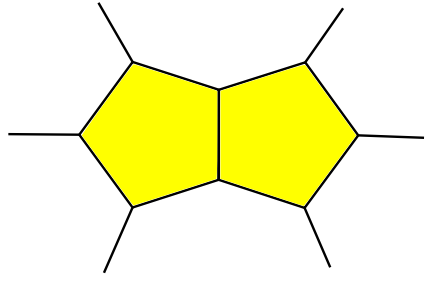}
\end{tabular}
\caption{$2$-loop with $p=q=5$}\label{2-loop}
\end{figure}
\begin{proof}
Walking round $\gamma_\a$ in $\P_\a$ we obtain an $l_\a$-loop $\L_\a\subset
\W_{\a}$.

If $\B_k$ surrounds two $k$-gons $F_s:\{F_s\}=\W_1$, and $F_t:\{F_t\}=\W_2$,
then all facets in $\B_k$ are quadrangles.

If $\B_k$ surrounds a $k$-gon $F_s:\{F_s\}=\W_\a$ and borders an
$l_\beta$-loop $\L_\b\subset W_\b$, $\l_\b\geqslant 2$, then from Lemma
\ref{b-lemma} we have
$\l_\beta=\sum\limits_{i=1}^k(s_{j_i}-3-1)=\sum\limits_{j=4}^7jb_j-4\sum\limits_{j=4}^7b_j=b_5+2b_6+3b_7$.

If $\B_k$ borders an $\l_1$-loop $\L_1$ and an $l_2$-loop $\L_2$,
$l_1,l_2\geqslant 2$, then (a) follows from Lemma~\ref{b-lemma}.
$$
l_1+l_2=\sum\limits_{i=1}^k(\a_i+\b_i-2)=\sum\limits_{i=1}^k(s_{j_i}-4)=\sum\limits_{j=4}^7jb_j-4\sum\limits_{j=4}^7b_j=b_5+2b_6+3b_7=2k-2b_4-b_5+b_7.
$$
We have $\min\{\l_1,\l_2\}\leqslant
k-b_4-\lceil\frac{b_5-b_7}{2}\rceil\leqslant k$

If $b_7=0$ and $l_1,l_2\geqslant k$, then from (3b) we have $l_1=l_2=k$,
$b_4=b_5=0$, $b_6=k$.

The last statement follows from Lemma \ref{b-lemma}.
\end{proof}

\begin{lemma}\label{3-loop-lemma}
Let $P$ be a simple polytope with $p_3=0$. If a $3$-loop $\L_3=(F_i,F_j,F_k)$
is not a $3$-belt, then $F_i\cap F_j\cap F_k$ is a vertex
(Fig.~\ref{3-loop}а), and $\L_3$ borders a $k$-loop,
$k=s_i+s_j+s_k-9\geqslant 3$.
\end{lemma}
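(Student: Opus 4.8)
The plan is to analyze a $3$-loop $\L_3=(F_i,F_j,F_k)$ that fails to be a $3$-belt and extract exactly what that failure means. Since $\L_3$ is a loop, we already know $F_i\cap F_j$, $F_j\cap F_k$, and $F_k\cap F_i$ are each edges. The only way for $\L_3$ not to be a $3$-belt is that the defining condition $F_i\cap F_j\cap F_k=\varnothing$ is violated, so this triple intersection is nonempty. First I would show that because $P$ is simple, a common point of three distinct facets is necessarily a single vertex $\ib{v}$: every vertex lies in exactly three facets, so $F_i\cap F_j\cap F_k=\{\ib{v}\}$, and moreover the three edges $F_i\cap F_j$, $F_j\cap F_k$, $F_k\cap F_i$ are exactly the three edges of $P$ through $\ib{v}$, all meeting at $\ib{v}$. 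This gives the picture in Figure~\ref{3-loop}.

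Next I would produce the bordering cycle. Remove from $\partial F_i\cup\partial F_j\cup\partial F_k$ the three edges at $\ib{v}$ and the vertex $\ib{v}$ itself; the remaining boundary portion of $|\L_3|$ is a simple edge-cycle $\gamma$. Concretely, starting at the vertex of $F_i\cap F_j$ other than $\ib{v}$, one walks along the boundary of $F_i$ away from $\ib{v}$ until reaching $F_i\cap F_k$, then along $\partial F_k$ away from $\ib{v}$ until $F_k\cap F_j$, then along $\partial F_j$ back to the start; since the only pairwise intersections and the only triple intersection have been accounted for by $\ib{v}$ and the three edges, this walk meets no vertex twice, so $\gamma$ is simple. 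Here one must use $p_3=0$: it guarantees $s_i,s_j,s_k\geqslant4$, so each facet contributes at least one edge to $\gamma$ and the walk is nondegenerate. I would then apply Theorem~\ref{Jordan-Th} to $\gamma$: $\partial P\setminus\gamma$ splits into two components, one of which is the open set $\int F_i\cup\int F_j\cup\int F_k\cup(\text{the interiors of the three edges at }\ib{v})\cup\{\ib{v}\}$, whose closure is $|\L_3|$, while the other component $\C$ is bordered by an $l$-loop $\L=\L_\b$ in the sense of the definition preceding Lemma~\ref{b-lemma}.

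Finally I would compute $l=s_i+s_j+s_k-9$. Count the edges of $\gamma$: facet $F_i$ has $s_i$ edges, of which two are the edges $F_i\cap F_j$ and $F_i\cap F_k$ at $\ib{v}$, so $F_i$ contributes $s_i-2$ edges to $\gamma$; similarly for $F_j$ and $F_k$. Hence $\gamma$ has $(s_i-2)+(s_j-2)+(s_k-2)=s_i+s_j+s_k-6$ edges. Now I apply Lemma~\ref{b-lemma} to the pair $(\L_3,\L)$: they border the same cycle $\gamma$, and since $l_{\L_3}=3\geqslant2$ we are in case (2) of that lemma (case (1) is excluded because $\L_3$ is not a $1$-loop and $F_i$, $F_j$, $F_k$ are distinct, so $\L_3$ cannot be an $a$-loop with a single underlying facet — more precisely $\L_3$ has three facets while $l=1$ would force it to have $a_1^\b\geqslant3$ repetitions of one facet; a cleaner route is to note $a^{\L_3}_r=s_{j_r}-3$ for the three facets and invoke case (2) directly). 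Thus $l=\sum_{r=1}^{3}(a^{\L_3}_r-1)$, and each facet $F$ of $\L_3$ meets $\gamma$ in $s_F-3$ edges (its total edge count $s_F$ minus the two edges at $\ib{v}$ minus, wait—reconcile: the two edges at $\ib{v}$ are shared with the other two facets of $\L_3$, not with $\gamma$; but also each facet shares with each neighbor in $\L_3$ exactly one edge, namely an edge at $\ib{v}$, so its edges on $\gamma$ number $s_F-2$; however the endpoints force the count in the $a$-formula to be $s_F-2$). Carrying the bookkeeping through, $l=(s_i-3)+(s_j-3)+(s_k-3)=s_i+s_j+s_k-9$, and since $s_i,s_j,s_k\geqslant4$ we get $l\geqslant3$, completing the proof.

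The main obstacle I anticipate is the careful edge-bookkeeping in the last step: one must be scrupulous about which edges at $\ib{v}$ are shared between consecutive facets of $\L_3$ versus which edges lie on $\gamma$, and make sure the "minus $1$ per facet" in Lemma~\ref{b-lemma}'s formula is applied consistently with how the $a$-values are defined at the turning vertices of the walk. The topological claims (that $\gamma$ is simple, that one side of it is exactly $|\L_3|$) are routine once $p_3=0$ is used to rule out degeneracies, but getting the arithmetic $s_i+s_j+s_k-9$ rather than an off-by-one variant requires drawing the picture in Figure~\ref{3-loop} honestly.
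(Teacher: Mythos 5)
Your proposal is correct and follows essentially the same route as the paper: the failed belt condition forces $F_i\cap F_j\cap F_k$ to be a vertex by simplicity, the remaining boundary of $|\L_3|$ is a simple edge-cycle, and Lemma~\ref{b-lemma} with $a_r=s_r-2$ edges of each facet on $\gamma$ gives $k=\sum(s_r-3)=s_i+s_j+s_k-9\geqslant 3$. The mid-proof hesitation over whether $a_r$ is $s_r-2$ or $s_r-3$ resolves to the correct value ($s_r-2$, since exactly the two edges at the common vertex are withheld from $\gamma$), so the final arithmetic matches the paper's.
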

\begin{figure}[h]
\begin{tabular}{cc}
\includegraphics[scale=0.2]{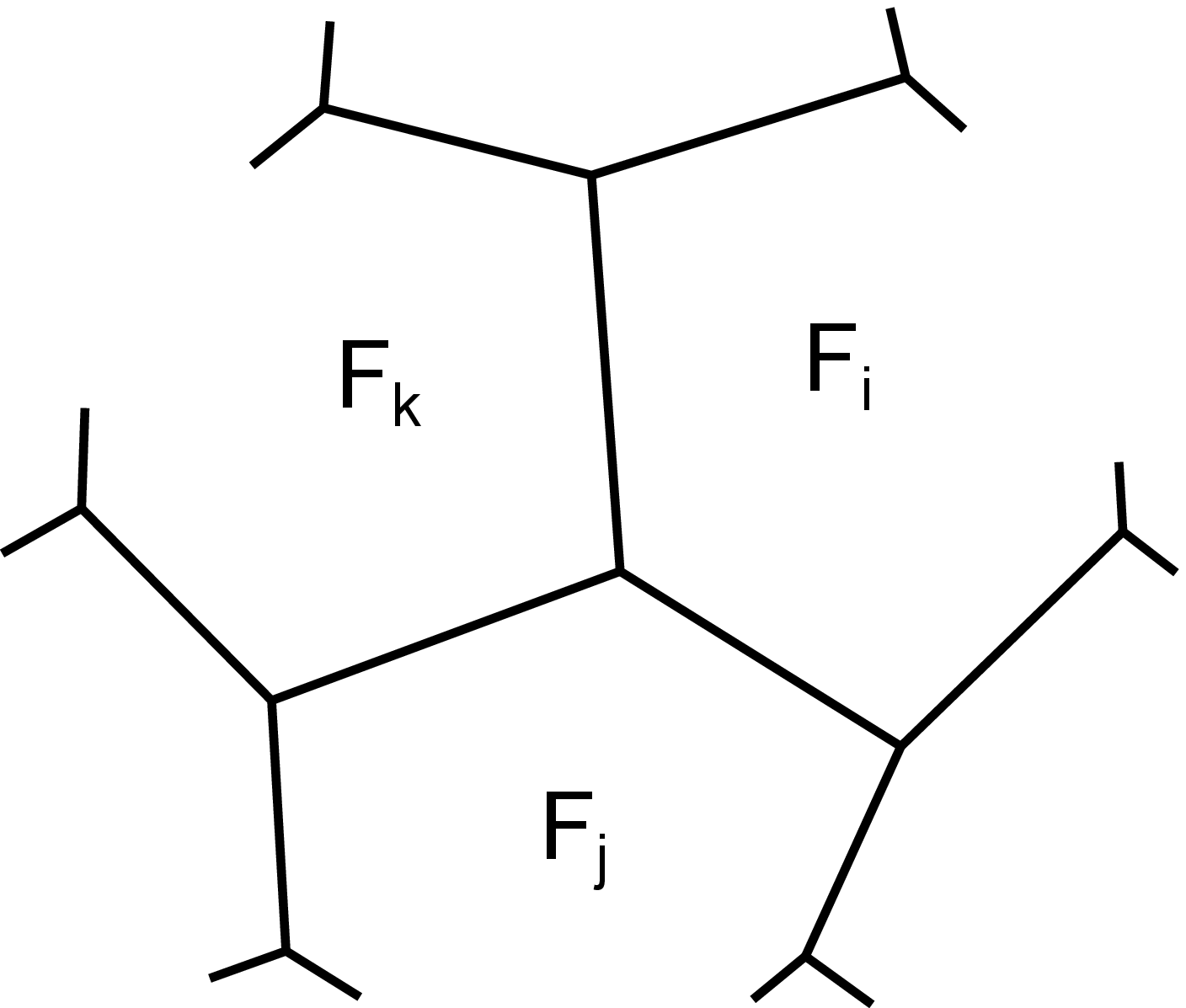}&\includegraphics[scale=0.2]{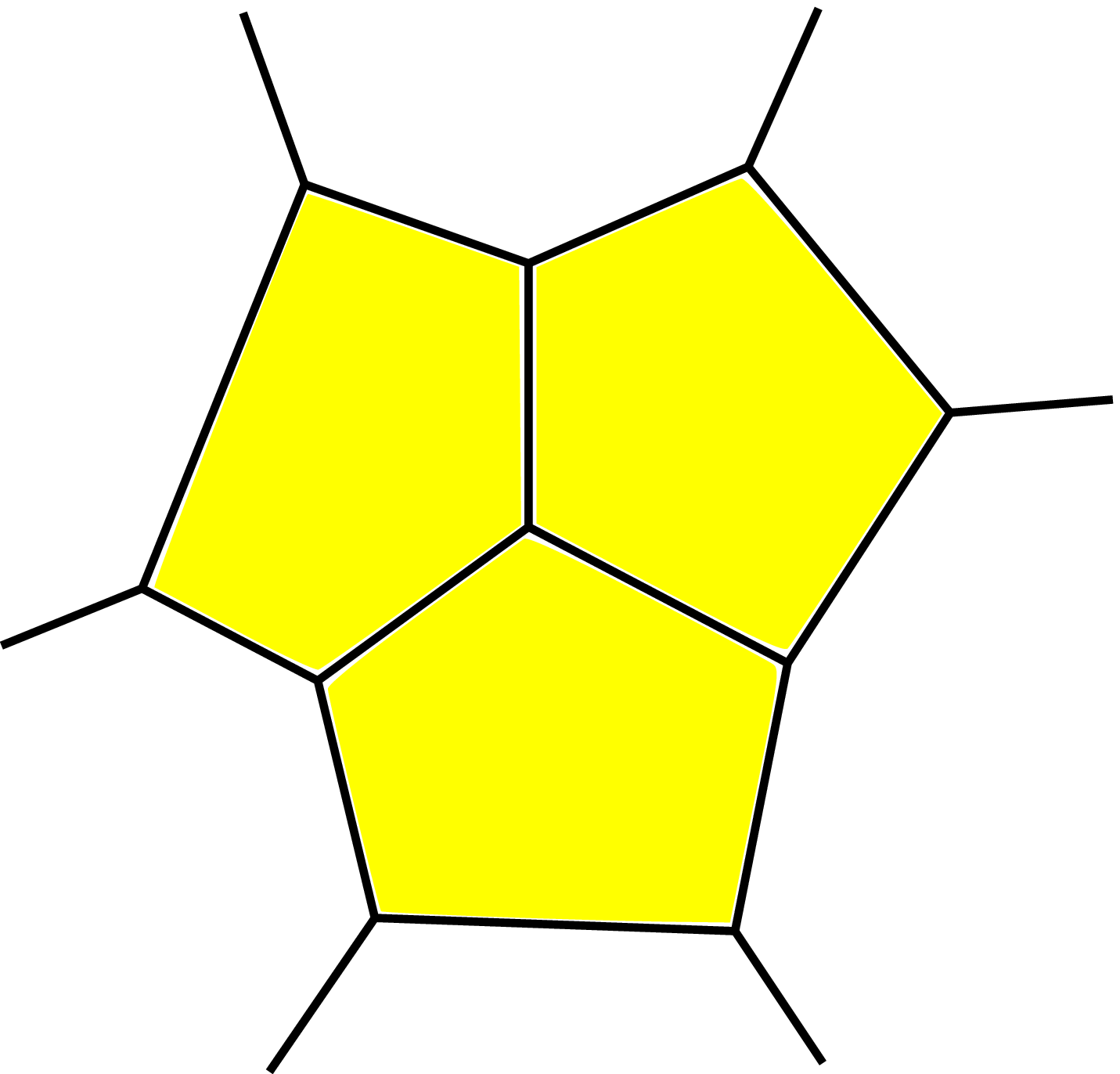}\\
\\
а)&б)
\end{tabular}
\caption{}\label{3-loop}
\end{figure}
\begin{proof}
Since $\L_3$ is not a $3$-belt, $F_i\cap F_j\cap F_k$ is a vertex. Any two
vertices in $\vert(F_r)$ are different. Let two vertices
$\ib{v}\in\vert(F_i)$ and $\ib{w}\in\vert(F_j)$ coincide. Then
$\ib{v},\ib{w}\in F_i\cap F_j$. From this we obtain that the vertices
$\vert(F_i)\cup\vert(F_j)\cup\vert(F_k)\setminus\{F_i\cap F_j\cap F_k\}$ form
a simple edge-cycle $\gamma$ with $\{F_i,F_j,F_k\}=\D_\a$. Walking round
$\gamma$ in $\C_\b$ we obtain a $k$-loop. Lemma \ref{b-lemma} implies
$k=(s_i-2)+(s_j-2)+(s_k-2)-3=s_i+s_j+s_k-9$. Since $s_i,s_j,s_k\geqslant 4$,
we have~$k\geqslant 3$.
\end{proof}
\begin{theorem}\label{3-belt-theorem}
Let $P$ be simple $3$-polytope with $p_3=0$, $p_4\leqslant 2$, $p_7\leqslant
1$, and $p_k=0$, $k\geqslant 8$. Then it has no $3$-belts. In particular, it
is a flag polytope.
\end{theorem}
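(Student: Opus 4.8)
The plan is to argue by contradiction. Suppose $P$ contains a $3$-belt $\B=(F_i,F_j,F_k)$. Apply Lemma~\ref{loop-lemma} to $\B$ with $k=3$. Since $p_3=0$, cases~(1) and~(2) of that lemma cannot occur (they require a triangular facet), so $\B$ borders an $l_1$-loop $\L_1\subset\W_1$ and an $l_2$-loop $\L_2\subset\W_2$, and by Lemma~\ref{b-lemma} the two boundary cycles of $|\B|$ satisfy $|\gamma_\alpha|=l_\alpha+3$. The last sentence of Lemma~\ref{loop-lemma} forbids $l_\alpha=2$ (which would force $k\geqslant4$), so $l_1,l_2\geqslant3$; combining this with part~(3c) and $p_7\leqslant1$ gives $3\leqslant\min\{l_1,l_2\}\leqslant3-b_4-\lceil(b_5-b_7)/2\rceil$, whence $b_4=0$, $b_5\leqslant b_7\leqslant1$, and $\min\{l_1,l_2\}=3$. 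Running through the three remaining possibilities $(b_5,b_6,b_7)\in\{(0,3,0),(1,1,1),(0,2,1)\}$ for $\B$, one checks that there is always a side of $\B$ --- a disk $\overline{\P}$ cut off by $\B$, with boundary cycle $\gamma$ of length $|\gamma|=6$ --- such that $\overline{\P}$ contains no heptagonal facet: when $\B$ itself contains the heptagon both disks are heptagon-free and at least one has a $6$-cycle boundary, and when $\B$ has no heptagon then $l_1=l_2=3$, so both boundaries are $6$-cycles and one disk avoids the at most one heptagon of $P$.

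Next I peel $\overline{\P}$ in layers. The facets of $\overline{\P}$ incident to $\gamma$ form a loop $\L$; as $\gamma$ is bordered from the other side by a $3$-loop ($\B$ at the first step, the previously peeled layer afterwards), Lemma~\ref{b-lemma} yields $|\L|=|\gamma|-3=3$, so $\L$ is a $3$-loop. If $\L$ is not a $3$-belt, then by Lemma~\ref{3-loop-lemma} it is the star of a vertex, and the loop it borders is obtained, via Theorem~\ref{Jordan-Th}, by walking round $\gamma$ on the side away from that vertex --- i.e.\ it is exactly the outer $3$-loop; the formula in Lemma~\ref{3-loop-lemma} then forces the side-numbers of the three facets of $\L$ to sum to $9+3=12$, so (as $p_3=0$) each of them is a quadrilateral, contradicting $p_4\leqslant2$. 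If instead $\L$ is a $3$-belt, apply Lemma~\ref{loop-lemma} to it: $\L$ is heptagon-free and borders a $3$-loop on one side, so one of its bordering loops has size $3$, and the arithmetic of the first paragraph now forces $b_4(\L)=b_5(\L)=0$. Thus $\L$ consists of three hexagons, and since the two boundary cycles of the cylinder $|\L|$ together carry $\sum_{F\in\L}(s(F)-2)=12$ edges, the cycle $\gamma'$ other than $\gamma$ has length $12-|\gamma|=6$. Then $\overline{\P}\setminus|\L|$ is a strictly smaller disk, bounded by the $6$-cycle $\gamma'$, heptagon-free, and bordered from outside by the $3$-loop $\L$; I repeat the step with this disk in place of $\overline{\P}$.

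Because $P$ is finite and every peeled layer consumes three new facets, the process must terminate, and it can terminate only when the current layer $\L$ fails to be a $3$-belt --- which, as just shown, produces three quadrilaterals and contradicts $p_4\leqslant2$. Hence $P$ has no $3$-belt; and since $p_3(\Delta^3)=4\neq0$ implies $P\neq\Delta^3$, the polytope $P$ is flag by definition.

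The inductive peeling should be routine; I expect the main obstacle to be the first paragraph, namely extracting from parts~(3a)--(3d) and the last sentence of Lemma~\ref{loop-lemma}, together with $p_7\leqslant1$, that $\B$ always has a heptagon-free side with a hexagonal boundary cycle --- in particular checking that belts $\B$ containing a pentagon or the heptagon do not slip past the argument. A secondary point needing care is the identification, through Theorem~\ref{Jordan-Th}, of the loop produced by Lemma~\ref{3-loop-lemma} with the outer $3$-loop, which is what pins down $k=3$ and hence forces the three quadrilaterals.
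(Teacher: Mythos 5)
Your proposal is correct and follows essentially the same route as the paper: after forcing $l_1,l_2\geqslant 3$ and locating a heptagon-free side of the $3$-belt via Lemmas \ref{b-lemma} and \ref{loop-lemma}, you show each successive inner $3$-loop must be a $3$-belt of hexagons (the non-belt case being killed by Lemma \ref{3-loop-lemma} together with $p_3=0$, $p_4\leqslant 2$) and then descend to a contradiction. The only presentational difference is that you phrase the descent as a terminating peeling of a finite disk while the paper phrases it as producing infinitely many distinct $3$-belts; these are the same argument.
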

\begin{proof}
Let $P$ has a $3$-belt $\B_3$. By Lemma \ref{loop-lemma} it borders an
$l_1$-loop $\L_1$ and $\l_2$-loop $\L_2$, where $\l_1,\l_2\geqslant 2$,
$\l_1+\l_2\leqslant 7$. Also from Lemma \ref{loop-lemma} we have
$l_1,l_2\geqslant 3$, and $\min\{l_1,l_2\}=3$. If $\B_3$ contains a heptagon,
then $\W_1,\W_2$ contain no heptagons. If $\B_3$ contains no heptagons, then
$l_1=l_2=3$, and one of the sets $\W_1$ and $\W_2$, say $\W_\a$, contains no
heptagons. In both cases we obtain a set $\W_\a$ without heptagons and a
$3$-loop $\L_\a\subset \W_\a$. Then $\L_\a$ is a $3$-belt, else by Lemma
\ref{3-loop-lemma}  $\B_3$ should have at least $4+4+5-9=4$ facets.
Considering the other boundary component of $\L_\a$ we obtain again a
$3$-belt there. Thus we obtain an infinite series of different $3$-belts
inside $|\W_\a|$. A contradiction.
\end{proof}
\begin{cor}[\cite{BE15}]
Any fullerene is a flag polytope.
\end{cor}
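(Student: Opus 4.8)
The plan is to deduce the statement as an immediate special case of Theorem~\ref{3-belt-theorem}. Let $P$ be a fullerene. By definition $P$ is a simple $3$-polytope all of whose facets are pentagons or hexagons, so $p_k(P)=0$ for every $k\notin\{5,6\}$; in particular $p_3=0$, $p_4=0\leqslant 2$, $p_7=0\leqslant 1$, and $p_k=0$ for all $k\geqslant 8$. These are exactly the hypotheses of Theorem~\ref{3-belt-theorem}, so $P$ has no $3$-belts, and since $P\ne\Delta^3$ (indeed $\Delta^3$ has $p_3=4\ne0$), $P$ is a flag polytope. In the text the proof is therefore just this one-line verification of hypotheses.

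If one wanted to argue without invoking Theorem~\ref{3-belt-theorem} at full strength, the same infinite-descent argument specializes and in fact simplifies. A hypothetical $3$-belt $\B_3$ cannot surround a facet, since that facet would be a triangle and $p_3=0$; hence by Lemma~\ref{loop-lemma} it borders an $l_1$-loop $\L_1\subset\W_1$ and an $l_2$-loop $\L_2\subset\W_2$. A $2$-loop cannot border a $3$-belt, because $l_\a=2$ would force $k=s_i+s_j-4\geqslant4$, so $l_1,l_2\geqslant3$; as $b_7=0$, Lemma~\ref{loop-lemma}(3d) then gives $l_1=l_2=3$ and $b_6=3$, i.e. $\B_3$ consists of three hexagons bordering a $3$-loop on each side. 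By Lemma~\ref{3-loop-lemma} such a $3$-loop is a $3$-belt unless it surrounds a vertex, in which case $\B_3$ would have $s_i+s_j+s_k-9\geqslant5+5+5-9=6>3$ facets, which is impossible. So each side again carries a $3$-belt; passing to its far boundary component and iterating produces an infinite sequence of pairwise distinct $3$-belts nested inside a disk, a contradiction.

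The only genuine content is the finiteness (infinite-descent) step, and it is already packaged inside Theorem~\ref{3-belt-theorem} together with Lemmas~\ref{loop-lemma} and~\ref{3-loop-lemma}; so I would simply cite that theorem and record the direct argument above only as a remark.
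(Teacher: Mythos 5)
Your proposal is correct and matches the paper exactly: the corollary is meant as an immediate consequence of Theorem~\ref{3-belt-theorem}, obtained by checking $p_3=0$, $p_4=0\leqslant 2$, $p_7=0\leqslant 1$, and $p_k=0$ for $k\geqslant 8$, which is precisely your one-line verification. Your supplementary remark merely specializes the infinite-descent argument already contained in the proof of that theorem, so nothing further is needed.
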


In what follows we will implicitly use the fact that if a $3$-polytope has no
$3$-belts, then any set of $3$ pairwise intersecting facets has a common
vertex.

\begin{lemma}\label{facet-belt}
Any $k$-gonal facet, $k\geqslant 3$, of a flag $3$-polytope $P$  is
surrounded by a $k$-belt.
\end{lemma}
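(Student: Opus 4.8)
The plan is to fix a $k$-gonal facet $F$ and look at the $k$ facets that share an edge with $F$. First I would argue that these neighbours are pairwise distinct and, walking around $\partial F$, form a $k$-loop $\L=(G_1,\dots,G_k)$ with $G_j\cap G_{j+1}$ a vertex of $F$ for each $j$ (indices mod $k$): each vertex of $F$ lies on exactly three facets, namely $F$ and two consecutive neighbours, so consecutive $G_j$ meet, and distinctness of the $G_j$ follows because a repetition $G_j=G_l$ with $l\ne j\pm1$ would make $F$, $G_j$, $G_l$ three pairwise intersecting facets; by flagness (the polytope has no $3$-belts, so such a triple has a common vertex, as noted just before the lemma) they would share a vertex, forcing $l=j\pm1$, a contradiction. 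Thus $\L$ surrounds $F$ in the sense of the definition, with $\{F\}=\W_\a$ for the component $\m P_\a=\int F$.

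Next I would verify the two defining conditions of a $k$-belt for $\L$. For $k\geqslant4$: if $G_p\cap G_q\ne\varnothing$ for some non-consecutive pair $(p,q)$, then again $F$, $G_p$, $G_q$ are pairwise intersecting, hence (flagness) have a common vertex $\ib v$; but $\ib v\in F$ lies on only two neighbours of $F$, which are consecutive, contradicting that $p,q$ are non-consecutive. So intersections occur exactly for consecutive indices, which is the $k\geqslant4$ condition. For $k=3$: I must show $G_1\cap G_2\cap G_3=\varnothing$. If this triple intersection were a vertex $\ib v$, then $\ib v\in F$ (since $G_1\cap G_2$ is a vertex of $F$), so $\ib v$ lies on four facets $F,G_1,G_2,G_3$, contradicting simplicity of $P$. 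Hence $\L$ is a $3$-belt. In all cases $\L$ is the desired $k$-belt surrounding $F$.

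The main subtlety — and the step I would be most careful about — is justifying that the $k$ edge-neighbours of $F$, taken in cyclic order along $\partial F$, really do close up into a loop of exactly $k$ distinct facets with consecutive ones meeting in the vertices of $F$; this is where simplicity (each vertex of $F$ on exactly $3$ facets) and the no-$3$-belt hypothesis both get used, and it is worth stating explicitly that the cycle $\gamma$ bounding $|\L|$ on the side away from $F$ together with $\partial F$ gives, via Lemma \ref{belt-lemma} (or directly Theorem \ref{Jordan-Th}), the required cylinder structure. Everything else is a direct application of the observation recorded immediately before the lemma statement.
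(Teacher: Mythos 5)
Your argument for $k\geqslant 4$ is correct and matches the paper's: pairwise intersecting facets of a flag polytope share a vertex (the remark preceding the lemma), that vertex lies on $F$, and simplicity of $P$ then forces the two neighbours to be consecutive around $\partial F$. Your extra care in checking that the $k$ edge-neighbours are pairwise distinct is a reasonable addition that the paper leaves implicit.

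The $k=3$ case has a genuine gap. You write that if $G_1\cap G_2\cap G_3=\{\ib v\}$ then $\ib v\in F$ ``since $G_1\cap G_2$ is a vertex of $F$''. But in a simple $3$-polytope $G_1\cap G_2$ is an \emph{edge}, not a vertex: it has two endpoints, one of which is $G_1\cap G_2\cap F$, and the other of which need not lie on $F$ at all. The vertex $\ib v=G_1\cap G_2\cap G_3$ can perfectly well be that second endpoint, in which case $\ib v\notin F$ and no contradiction with simplicity arises. Indeed, this is exactly what happens on the tetrahedron: $F$ is one facet, $G_1,G_2,G_3$ are the other three, and $G_1\cap G_2\cap G_3$ is the apex opposite $F$. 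The correct way to close the case, as the paper does, is to observe that the four vertices $G_1\cap G_2\cap F$, $G_2\cap G_3\cap F$, $G_3\cap G_1\cap F$, and $G_1\cap G_2\cap G_3$ exhaust the vertex set and force $P\simeq\Delta^3$, which is excluded because the definition of a flag polytope explicitly requires $P\ne\Delta^3$. Simplicity alone does not give the contradiction; it is the hypothesis $P\ne\Delta^3$ built into flagness that does.

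A minor related imprecision earlier in your write-up: ``$G_j\cap G_{j+1}$ a vertex of $F$'' should read ``$G_j\cap G_{j+1}\cap F$ a vertex of $F$''; the intersection $G_j\cap G_{j+1}$ itself is an edge of $P$. This slip is what leads you astray in the $k=3$ argument.
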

\begin{proof}
A $k$-gonal facet $F$ is surrounded by a $k$-loop $\L_k=(F_{i_1},\dots,
F_{i_k})$. If $F_{i_p}\cap F_{i_q}\ne\varnothing$, then $F_{i_p}\cap
F_{i_q}\cap F$ is a vertex, therefore the facets are successive while walking
round the boundary of $F$. If $k>3$, this implies that $\L_k$ is a $k$-belt.
For $k=3$ let $F_{i_1}\cap F_{i_2}\cap F_{i_3}\ne\varnothing$. Then it is a
vertex. But $F_{i_1}\cap F_{i_2}\cap F$, $F_{i_2}\cap F_{i_3}\cap F$, and
$F_{i_3}\cap F_{i_1}\cap F$ are also vertices, hence $P\simeq \Delta^3$. Thus
$\L_k$ is indeed a $k$-belt.
\end{proof}
\begin{cor}\label{flag-3}
For a flag polytope we have $p_3=0$.
\end{cor}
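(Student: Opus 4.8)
The plan is to obtain this as an immediate consequence of Lemma~\ref{facet-belt} together with the very definition of a flag polytope. Suppose, for contradiction, that a flag $3$-polytope $P$ has a triangular facet $F$, i.e. $p_3>0$. Since $P$ is flag, Lemma~\ref{facet-belt} applies to $F$ with $k=3$ and tells us that $F$ is surrounded by a $3$-belt $\B_3=(F_{i_1},F_{i_2},F_{i_3})$. But by definition a flag $3$-polytope contains no $3$-belts, so this is a contradiction, and hence $p_3=0$.

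First I would check that the hypotheses of Lemma~\ref{facet-belt} are genuinely satisfied: a flag polytope is by definition a simple $3$-polytope different from $\Delta^3$, so the lemma is directly applicable. In fact the argument is already essentially contained in the proof of Lemma~\ref{facet-belt}: the surrounding $3$-loop of any triangular facet fails to be a $3$-belt only when $F_{i_1}\cap F_{i_2}\cap F_{i_3}\ne\varnothing$, and in that case one forces $P\simeq\Delta^3$, which is excluded for flag polytopes. Thus for a flag $P$ the surrounding loop is necessarily a $3$-belt, contradicting flagness. I do not expect any real obstacle here — the statement is a one-line corollary, and the only point requiring a moment's care is to make sure the case $P\simeq\Delta^3$ is ruled out, which it is by definition.
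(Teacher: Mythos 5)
Your argument is correct and is exactly the paper's intended route: the corollary is stated immediately after Lemma~\ref{facet-belt}, and its proof is precisely the observation that a triangular facet of a flag polytope would be surrounded by a $3$-belt, contradicting the definition of flagness (with $P\ne\Delta^3$ guaranteed by that same definition). No gaps.
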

\begin{lemma}\label{4-loop-lemma}
Let $P$ ba a flag polytope. If a simple $4$-loop $\L_4=(F_i,F_j,F_k,F_l)$ is
not a $4$-belt, then it consist of $4$ facets surrounding an edge
(Fig.~\ref{4-loop}а), and $\L_4$ borders a $k$-loop, $
k=s_1+s_2+s_3+s_4-14\geqslant 2$.
\end{lemma}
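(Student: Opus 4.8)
The strategy mirrors the proof of Lemma \ref{3-loop-lemma}, with the combinatorics of a quadrilateral loop replacing that of a triangular one. First I would observe that since $P$ is flag, $\L_4$ being not a $4$-belt must fail the defining condition for $k\geqslant 4$: there is a pair of non-successive facets in $\L_4$ that intersect. Up to cyclic relabeling the only non-successive pair is $\{F_i,F_k\}$ (and $\{F_j,F_l\}$). Suppose $F_i\cap F_k\ne\varnothing$. Then $F_i,F_j,F_k$ are pairwise intersecting, and since $P$ has no $3$-belts (flag), Lemma \ref{facet-belt} and the remark following Theorem \ref{3-belt-theorem} give that $F_i\cap F_j\cap F_k$ is a vertex; likewise $F_i\cap F_k\cap F_l$ is a vertex. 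If these two vertices were distinct, then $F_i\cap F_k$ would be an edge with both of these as endpoints, so $F_i\cap F_j\cap F_k\cap F_l$ would be nonempty; but in a simple $3$-polytope no four facets share a vertex. Hence $F_i\cap F_j\cap F_k$ and $F_i\cap F_k\cap F_l$ coincide — this common vertex lies on all four facets, contradicting simplicity again, unless $F_i\cap F_k$ is an edge $e$ and the four facets are exactly the four facets around $e$ (two at each endpoint). Sorting this out carefully is the point where I expect the argument to need the most care: I must rule out degenerate configurations and pin down that $\L_4$ is precisely the cyclic arrangement of the four facets meeting the edge $e=F_i\cap F_k$, in the order $F_i,F_j,F_k,F_l$, so that $F_j\cap F_l=\varnothing$ (otherwise all four would again share a vertex).

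Once the structural picture is established — $\L_4$ consists of the four facets around an edge $e$ with endpoints $\ib{u}$ and $\ib{w}$, say $\ib{u}\in F_i\cap F_j\cap F_k$ and $\ib{w}\in F_k\cap F_l\cap F_i$ (after matching labels) — I would identify the edge-cycle bordered by $\L_4$ exactly as in Lemma \ref{3-loop-lemma}. The vertices of $F_i\cup F_j\cup F_k\cup F_l$ other than $\ib{u}$ and $\ib{w}$, together with the edges joining them along the boundaries of these four facets, form a simple edge-cycle $\gamma$; here one uses that distinct facets among the four meet only in the prescribed edges (their intersections are $F_i\cap F_j$, $F_j\cap F_k$, $F_k\cap F_l$, $F_l\cap F_i$, and the edge $e$), so no spurious identifications of vertices occur. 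Then $\{F_i,F_j,F_k,F_l\}=\D_\a$ for one side of $\gamma$ in the sense of Theorem \ref{Jordan-Th}, and walking round $\gamma$ on the other side $\C_\b$ produces a $k$-loop.

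For the count, apply Lemma \ref{b-lemma}(2): the number $k$ of facets in the bordered loop equals $\sum_{r}a^{\a}_r - l_\a$, where $l_\a=4$ and $a^{\a}_r$ is the number of edges $F$ contributes to $\gamma$ for each $F\in\L_4$. A facet $F_m$ of $\L_4$ that is an $s_m$-gon has $s_m$ boundary edges; of these, the edge $e$ is interior to $|\L_4|$, and two further edges (the intersections with its two neighbours in the loop) are also interior, leaving $s_m-3$ edges on $\gamma$ — except that $e$ is shared, so summing $s_m-3$ over the four facets double-counts nothing about $e$ but the two facets $F_j,F_l$ not containing $e$ each lose only two edges, contributing $s_j-2$ and $s_l-2$. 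Carrying this bookkeeping out gives $\sum_r a^\a_r=(s_1-3)+(s_2-2)+(s_3-3)+(s_4-2)=\sum_{m=1}^4 s_m-10$ for the two facets incident to $e$ versus the two that are not; hence $k=\big(\sum_{m=1}^4 s_m-10\big)-4=\sum_{m=1}^4 s_m-14$. (I have written $s_1,\dots,s_4$ for $s_i,s_j,s_k,s_l$ to match the statement.) Finally, since each $s_m\geqslant 4$ by Corollary \ref{flag-3} (a flag polytope has $p_3=0$), we get $k\geqslant 16-14=2$, and one checks that $k\geqslant 3$ unless there is further degeneracy, matching the bound claimed. The one genuinely delicate step, as flagged above, is the rigidity argument showing that the failure of the belt condition forces exactly the four-facets-around-an-edge configuration and nothing weaker; everything after that is the same Jordan-curve plus edge-counting routine used for $3$-loops.
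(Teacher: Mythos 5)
Your overall strategy — reduce to $F_i\cap F_k\ne\varnothing$, deduce the four-facets-around-an-edge configuration, form the boundary edge-cycle, and count with Lemma~\ref{b-lemma} — is exactly the paper's, and your final edge-count $(s_i-3)+(s_j-2)+(s_k-3)+(s_l-2)-4=\sum_m s_m-14\geqslant 2$ is correct.

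However, the step you yourself flag as delicate contains a genuine logical error, and your ``unless\dots'' clause does not repair it. You claim that if the vertices $F_i\cap F_j\cap F_k$ and $F_i\cap F_k\cap F_l$ were distinct, then $F_i\cap F_k$ would be an edge joining them and hence $F_i\cap F_j\cap F_k\cap F_l\ne\varnothing$. That implication is false: the four facets around an edge of a simple $3$-polytope never share a point — $F_j$ touches only one endpoint of the edge and $F_l$ only the other. You then conclude the two vertices must coincide, but that is precisely the impossible case: a coincidence would put four facets at one vertex, contradicting simplicity. The correct argument runs the other way. Since $F_i,F_j,F_k$ pairwise intersect and $P$ is flag, $F_i\cap F_j\cap F_k$ is a vertex; likewise $F_i\cap F_k\cap F_l$. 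These two vertices are automatically distinct by simplicity, both lie in the face $F_i\cap F_k$, hence $F_i\cap F_k$ is the edge having them as its endpoints, and $\L_4$ surrounds that edge. That is the whole rigidity step; there is no case split to navigate.

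Two smaller points. The reason $F_j\cap F_l=\varnothing$ is not ``otherwise all four would share a vertex''; the paper's reason is that $F_j\cap F_l\ne\varnothing$ would make $(F_i,F_j,F_l)$ a $3$-belt (the triple intersection $F_i\cap F_j\cap F_l$ cannot be a vertex, as that would force $F_i$ to be a triangle, impossible since $p_3=0$ for a flag polytope), contradicting flagness. And the aside ``one checks that $k\geqslant 3$ unless there is further degeneracy'' should be dropped: the lemma claims only $k\geqslant 2$, which you have already established, and indeed $k=2$ does occur (four pentagons around an edge, as on Fig.~\ref{4-loop}).
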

\begin{figure}[h]
\begin{tabular}{cc}
\includegraphics[scale=0.2]{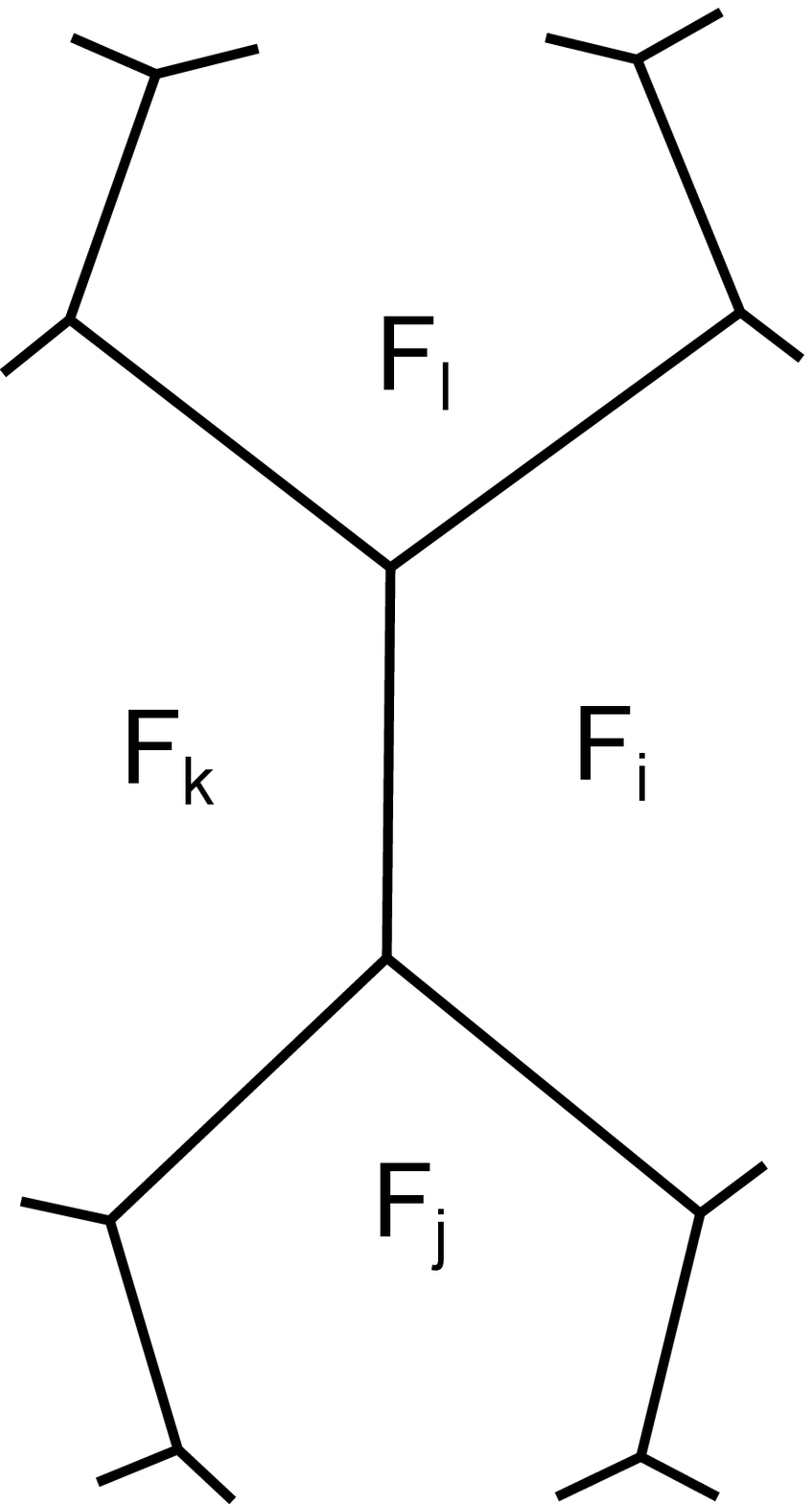}&\includegraphics[scale=0.2]{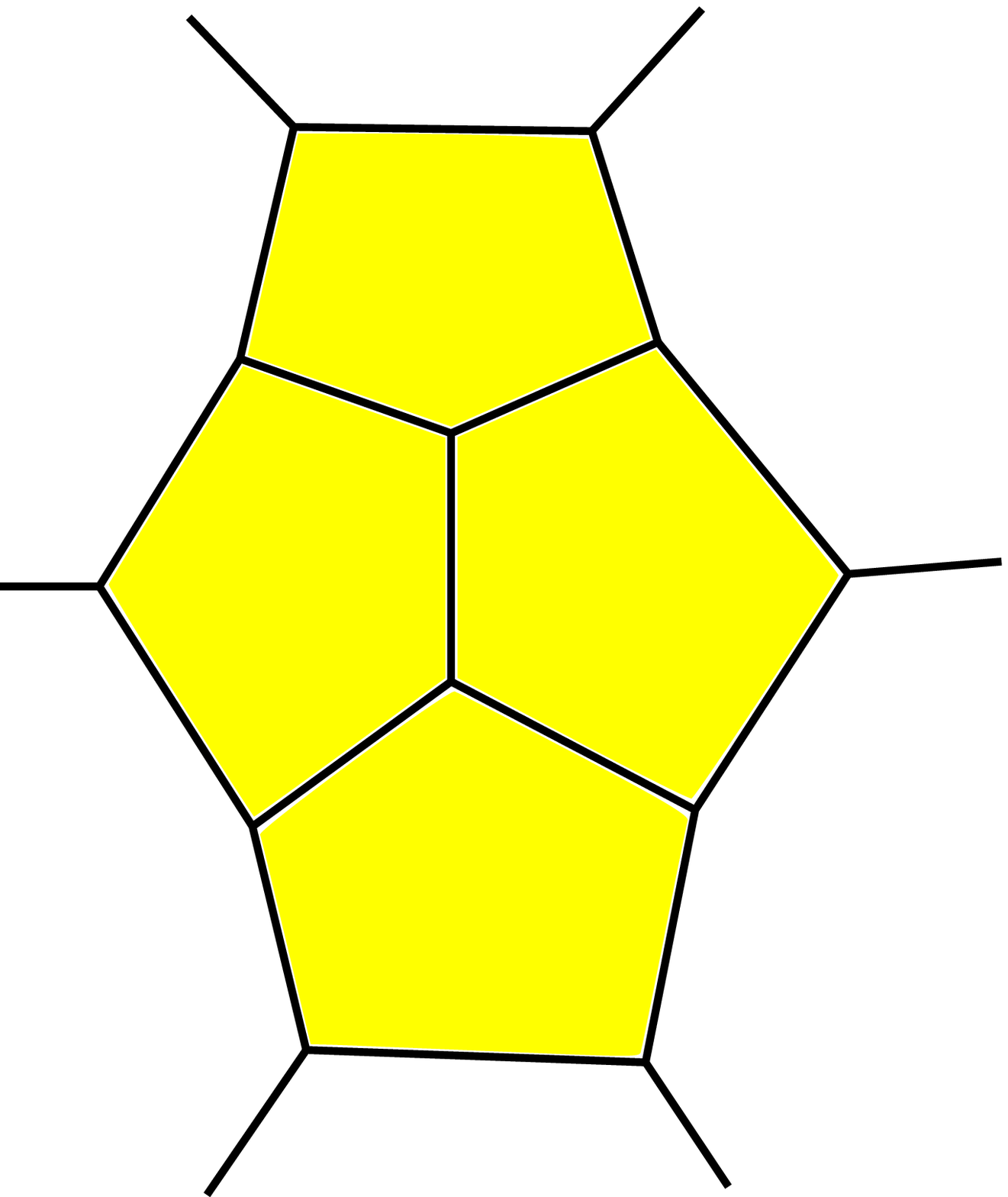}\\
а)&б)
\end{tabular}
\caption{}\label{4-loop}
\end{figure}
\begin{proof}
If $\L_4$ is not a $4$-belt, then either $F_i\cap F_k\ne\varnothing$, or
$F_j\cap F_l\ne\varnothing$. Without loss of generality let $F_i\cap
F_k\ne\varnothing$. Then $F_i\cap F_j\cap F_k$ and $F_i\cap F_k\cap F_l$ are
vertices, therefore $\L_4$ surrounds an edge $F_i\cap F_k$. We have $F_j\cap
F_l=\varnothing$, else $(F_i,F_j,F_l)$ is a $3$-belt. Then if two vertices in
$\vert(F_i)\cup\vert(F_j)\cup\vert(F_k)\cup\vert(F_l)\setminus\{F_i\cap
F_j\cap F_k,F_i\cap F_k\cap F_l\}$ coincide, they lie in $F_i\cap F_j$,
$F_j\cap F_k$,  $F_k\cap F_l$, or $F_l\cap F_i$. Therefore these vertices
form a simple edge-cycle $\partial|\L_4|$ and $\L_4$ borders a $k$-loop. By
Lemma \ref{b-lemma} we have
$$
k=(s_i-3)+(s_j-2)+(s_k-3)+(s_l-2)-4=s_i+s_j+s_k+s_l-14\geqslant 4+4+4+4-14=2.
$$
\end{proof}
\begin{lemma}\label{4-dl-lemma}
Let $P$ be a flag polytope with $p_4\leqslant 2$. If a $4$-belt $\B_k$
borders a $4$-loop $\L_4$, then all facets in $\L_4$ are different.
\end{lemma}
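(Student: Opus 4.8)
Assume for contradiction that $\L_4=(F_i,F_j,F_k,F_l)$ is not simple. Consecutive facets of a loop are distinct (their intersection is an edge, not a facet), so after a cyclic relabelling we may take $F_i=F_k=:F$, and then automatically $F\ne F_j$ and $F\ne F_l$. Let $\gamma_1$ be the boundary edge-cycle of $|\B|$ bordered by $\L_4$, and let $|\W_1|$ be the corresponding disk (Lemma~\ref{belt-lemma}), so that $F,F_j,F_l\in\W_1$ and $\gamma_1=\partial|\W_1|$. Walking round $\gamma_1$ we meet, in order, four maximal runs of edges lying on $F$, $F_j$, $F$, $F_l$ respectively; let $c_1,c_2,c_3,c_4\geqslant 1$ be their lengths and $u_1,u_2,u_3,u_4$ the vertices of $\gamma_1$ separating consecutive runs. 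Lemma~\ref{b-lemma}, applied with $\B$ and $\L_4$ (both $4$-loops) bordering the same cycle $\gamma_1$, gives $4=(c_1+c_2+c_3+c_4)-4$, i.e.\ $c_1+c_2+c_3+c_4=8$.

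First I would show $F_j\ne F_l$: each $u_m$ lies on both facets bounding the runs it separates, so if $F_j=F_l$, all four distinct vertices $u_1,\dots,u_4$ would lie on the single edge $F\cap F_j$ — impossible. Next, every vertex of a simple $3$-polytope lies on exactly three facets, so at $u_1$ the two incident $\gamma_1$-edges lie on $F$ and on $F_j$, each also on a belt facet; hence $u_1$ lies on $F$, $F_j$ and one belt facet, and its third edge is $e_2:=F\cap F_j$. The same holds at $u_2$. Since $e_2$ joins two facets of $\W_1$ while every edge of $\gamma_1$ joins a $\W_1$-facet to a $\B$-facet, $e_2$ is a chord of the disk $|\W_1|$ with endpoints $u_1,u_2$; symmetrically $e_4:=F\cap F_l$ is a chord with endpoints $u_3,u_4$, and since $u_1,\dots,u_4$ are distinct the chords $e_2,e_4$ are disjoint. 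This already yields $F_j\cap F_l=\varnothing$: otherwise $F,F_j,F_l$ pairwise intersect and, as $P$ is flag, share a vertex, which would then lie on both $e_2$ and $e_4$.

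The crux of the argument is the claim $\W_1=\{F,F_j,F_l\}$. The disjoint chords $e_2,e_4$ cut $|\W_1|$ into three closed sub-disks: $D_j$ bounded by $e_2$ and the $F_j$-run, $D_l$ bounded by $e_4$ and the $F_l$-run, and a middle disk $D_F$ bounded by the two $F$-runs together with $e_2$ and $e_4$. Now $F_j\subseteq|\W_1|$ and its boundary cycle contains every edge of the boundary cycle of $D_j$; a simple cycle cannot properly contain another simple cycle, so $\partial F_j=\partial D_j$, and two disks in $|\W_1|$ with this common boundary must coincide (they cannot be complementary in $\partial P\cong S^2$ because $|\W_1|\ne\partial P$), whence $F_j=D_j$. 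Likewise $F_l=D_l$ and $F=D_F$, so $|\W_1|=F\cup F_j\cup F_l$ and $\W_1$ contains no further facets. Counting boundary edges, $F$ is a $(c_1+c_3+2)$-gon, $F_j$ a $(c_2+1)$-gon, and $F_l$ a $(c_4+1)$-gon.

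To finish: $P$ is flag, so $p_3=0$ by Corollary~\ref{flag-3} and every facet has at least four edges; applied to $F_j$ and $F_l$ this forces $c_2\geqslant 3$ and $c_4\geqslant 3$, hence $c_1+c_3=8-(c_2+c_4)\leqslant 2$, so $c_1=c_3=1$ and $c_2=c_4=3$. Then $F$, $F_j$, $F_l$ are three distinct quadrilaterals, contradicting $p_4\leqslant 2$; therefore $\L_4$ is simple. I expect the only genuine obstacle to be the topological bookkeeping of the third paragraph — making precise that the two chords decompose $|\W_1|$ exactly into the facets $F$, $F_j$, $F_l$ — and this relies on nothing beyond the Jordan-curve statements already recorded in Theorem~\ref{Jordan-Th} and Lemma~\ref{belt-lemma} together with the elementary fact about nested cycles; everything else is short arithmetic.
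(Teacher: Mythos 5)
Your proof is correct and follows essentially the same route as the paper's: set $F_i=F_k$, show $F_j\ne F_l$ and $F_j\cap F_l=\varnothing$, observe that the two chords $F_i\cap F_j$ and $F_i\cap F_l$ exhaust the interior edges of the disk bounded by $\gamma$, and derive a numerical contradiction from the count in Lemma~\ref{b-lemma} together with $p_3=0$ and $p_4\leqslant 2$. You spell out the Jordan-curve bookkeeping that the paper compresses into its one-line ``walking round $\partial F_r$'' remark, and you phrase the final arithmetic as ``three distinct quadrilaterals'' rather than the paper's equivalent ``$\B_k$ would have $s_i+s_j+s_l-8\geqslant 5$ facets'', but the substance is the same.
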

\begin{proof}
Let $\L_4=(F_i,F_j,F_k,F_l)$ and $\B_k$ border the same simple edge-cycle
$\gamma$. The successive facets in $\L_4$ are different by definition. Let
$F_i=F_k$. Then $F_j\ne F_l$, else $\L_4=(F_i,F_j,F_i,F_j)$ and some vertex
in $F_i\cap F_j$ is passed twice while walking round~$\gamma$. Also $F_j\cap
F_l\ne\varnothing$, else $F_i\cap F_j\cap F_l\in\int |\L_4|$ is a vertex, and
the other vertex of $F_i\cap F_j$ is passed twice while walking round $
\gamma$. Thus, $\L_4$ consists of a facet $F_i$ and facets $F_j$ and $F_l$
that intersect $F_i$ by non-incident edges. If an edge $E\in F_r$ lies in
$\gamma$, $r=i,j,l$, then walking from $E$ round $\partial F_r$ in both
directions we obtain edges in $\gamma$ until we meet a vertex in
$\{F_i,F_j,F_l\}\setminus\{F_r\}$. Hence $\gamma=\partial \left(F_i\cup
F_j\cup F_l\right)$, and by Lemma \ref{b-lemma} $\B_k$ has
$(s_i-2-2)+(s_j-2)+(s_l-2)=s_i+s_j+s_l-8\geqslant 4+4+5-8=5$ facets. A
contradiction.
\end{proof}

\begin{theorem}\label{4-belt-theorem}
Let $P$ be a simple polytope with all facets pentagons and hexagons with at
most one exceptional facet $F$ being a quadrangle or a heptagon.
\begin{enumerate}
\item If $P$ has no quadrangles, then $P$ has no $4$-belts.
\item If $P$ has a quadrangle $F$, then there is exactly one $4$-belt. It
    surrounds $F$.
\end{enumerate}
\end{theorem}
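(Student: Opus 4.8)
The plan is to analyze any hypothetical $4$-belt $\B_4$ using the structural dichotomy of Lemma \ref{loop-lemma}, applied with $k=4$. First note that the hypotheses of the theorem place us in the setting of Lemma \ref{loop-lemma}: we have $p_3=0$ (by Corollary \ref{flag-3}, since $P$ is flag — which follows from Theorem \ref{3-belt-theorem} once we observe $p_4\leqslant 1\leqslant 2$ and $p_7\leqslant 1$ and $p_k=0$ for $k\geqslant 8$), together with $p_k=0$ for $k\geqslant 8$ and $p_7\leqslant 1$. So for a $4$-belt $\B_4$ consisting of $b_i$ $i$-gons ($4\leqslant i\leqslant 7$, $b_4+b_5+b_6+b_7=4$), exactly one of the three cases of Lemma \ref{loop-lemma} holds. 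I would run through them in order.

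In case (1), $\B_4$ surrounds two quadrangular facets and \emph{all four} facets of $\B_4$ are themselves quadrangles; together with the two surrounded quadrangles this forces $p_4\geqslant 6$, contradicting $p_4\leqslant 1$. In case (2), $\B_4$ surrounds a single $4$-gon $F_s$ and borders an $l_\b$-loop $\L_\b$ with $l_\b=b_5+2b_6+3b_7$; since $b_4\leqslant 1$ (it is the surrounded facet that is the quadrangle, so actually $\B_4$ has no quadrangle, i.e. the $b_4$ here counts quadrangles \emph{in the belt} and $P$ has at most one quadrangle total, already used up by $F_s$), we get $b_5+b_6+b_7=4$, so $l_\b=b_5+2b_6+3b_7\geqslant 4+0+0$ is at least... — here I would be careful, but in any event this case produces a quadrangle $F_s$ surrounded by a $4$-belt, which is exactly the conclusion of part (2); I should also check in this case that the $l_\b$-loop is actually a $4$-belt, or else argue directly from Lemma \ref{facet-belt} that a quadrangle is surrounded by a $4$-belt and combine with uniqueness below. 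In case (3), $\B_4$ borders an $l_1$-loop and an $l_2$-loop with $l_1+l_2=8-2b_4-b_5+b_7\leqslant 2\cdot 4+1=9$, $\min\{l_1,l_2\}\leqslant 4-b_4-\lceil\frac{b_5-b_7}{2}\rceil$, and both $l_\a\geqslant 2$. The key subcase is when $b_4=b_5=b_7=0$: then item (3d) forces $l_1=l_2=4$ and $b_6=4$, so $\B_4$ consists of four hexagons bordered by two $4$-loops. I then want to derive a contradiction: apply Lemma \ref{4-loop-lemma} to each bordering $4$-loop $\L_\a$. If some $\L_\a$ is not a $4$-belt it surrounds an edge and borders a $(s_1+s_2+s_3+s_4-14)$-loop; chasing the facet sizes (all pentagons/hexagons) pins this down and, combined with $p_5=12$ type counting or with producing a forbidden configuration, yields a contradiction. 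If $\L_\a$ \emph{is} a $4$-belt, I iterate: each of its two boundary components again bounds on the far side a $4$-loop, and since $P$ has only finitely many facets this infinite descent of distinct $4$-belts (as in the proof of Theorem \ref{3-belt-theorem}) is impossible unless the process closes up — but closing up a chain of $4$-belts of hexagons would force $p_5=0$, contradicting $p_5=12$.

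For part (2), uniqueness: given a quadrangle $F$, Lemma \ref{facet-belt} gives a $4$-belt surrounding it. If $\B'$ is any $4$-belt, cases (1) and (3) were just excluded (case (1) needs $p_4\geqslant 6$; case (3)'s surviving subcase needs no quadrangle at all and was killed above, while the other subcases of (3) have $b_4\geqslant 1$, meaning the belt itself contains a quadrangle — but then together with $F$ we'd have $p_4\geqslant 2$ unless that quadrangle \emph{is} $F$, and a $4$-belt containing $F$ as one of its four facets, bordering two $\geqslant 2$-loops, is ruled out by a short direct argument à la Lemma \ref{4-dl-lemma}), so $\B'$ falls under case (2) and surrounds a quadrangle, necessarily $F$. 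Two $4$-belts surrounding the same facet $F$ coincide, since each consists exactly of the four facets meeting $\partial F$. The main obstacle I anticipate is the hexagonal subcase of case (3): ruling out a $4$-belt of four hexagons bordered by two $4$-loops requires either the infinite-descent argument (and one must verify the descent genuinely produces \emph{new} belts and cannot wrap around) or a global parity/counting obstruction from $p_5=12$; getting that bookkeeping clean is the delicate point, everything else is a finite case check against $p_4\leqslant 1$ and $p_7\leqslant 1$.
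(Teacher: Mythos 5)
Your overall strategy (run the $4$-belt through the trichotomy of Lemma \ref{loop-lemma}, reduce to an all-hexagon belt bordered by $4$-loops, then iterate inward) is the same as the paper's, but there are two genuine gaps. First, you never establish that the loops bordering a $4$-belt satisfy $l_1,l_2\geqslant 4$, and this is exactly the step that kills the subcases of case (3) you skip. Your claim that ``the other subcases of (3) have $b_4\geqslant 1$'' is false: for example $b_5=4$ gives $l_1+l_2=4$ and $b_5=2,b_6=2$ gives $l_1+l_2=6$, both with $b_4=0$, and these are \emph{not} excluded by the arithmetic of (3b)--(3d) alone. The paper excludes them by showing a $2$-loop can only border a $k$-loop with $k\geqslant 4+5-4=5$ (last clause of Lemma \ref{loop-lemma}) and a non-belt $3$-loop only a $k$-loop with $k\geqslant 4+5+5-9=5$ (Lemma \ref{3-loop-lemma}), so no bordering loop of a $4$-belt has length $2$ or $3$; only then does $l_1+l_2\leqslant 8+b_7$ force $b_4=b_5=0$, $l_1=l_2=4$ when $b_7=0$ (and when $b_7=1$ one still gets a $4$-loop on some side, now lying in a heptagon-free region). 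Without the $l_\a\geqslant 4$ step your case analysis is incomplete.

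Second, your treatment of the surviving all-hexagon case is mis-aimed. To conclude that the bordering $4$-loop $\L_\a$ is a $4$-belt you need both Lemma \ref{4-dl-lemma} (to know $\L_\a$ is simple) and Lemma \ref{4-loop-lemma}: if $\L_\a$ were not a belt it would border, along the same edge-cycle, a loop of length $s_1+s_2+s_3+s_4-14\geqslant 4+5+5+5-14=5>4$, contradicting that this loop is $\B_4$ itself; no ``chasing facet sizes plus $p_5=12$ counting'' is needed. And the descent cannot ``close up'': each new $4$-belt lies strictly inside the disk $|\W_\a|$ of the previous one, so the process must terminate, and by Lemma \ref{loop-lemma} it can only terminate when the innermost $4$-belt surrounds a quadrangle. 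That is the punchline: with no quadrangles this is an immediate contradiction, and with one quadrangle (hence no heptagon) the descent runs on \emph{both} sides of $\B_4$ and would produce two quadrangles. Your uniqueness argument for part (2) is then fine once every $4$-belt is known to fall into case (2) of Lemma \ref{loop-lemma}, since the $4$-belt surrounding a given quadrangle is just its set of neighbouring facets.
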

\begin{proof}
By Theorem \ref{3-belt-theorem} the polytope $P$ is flag.

By Lemma \ref{facet-belt} a quadrangular facet is surrounded by a $4$-belt.

Let $\B_4$ be a $4$-belt that does not surround a quadrangular facet. By
Lemma \ref{loop-lemma} it borders an $l_1$-loop $\L_1$ and $\l_2$-loop
$\L_2$, where $\l_1,\l_2\geqslant 2$, and $\l_1+\l_2\leqslant 9$. We have
$l_1,l_2\geqslant 3$, since a $2$-loop borders a $k$-loop with $k\geqslant
4+5-4=5$. We have $l_1,l_2\geqslant 4$ by Theorem \ref{3-belt-theorem} and
Lemma \ref{3-loop-lemma}, since a $3$-loop that is not a $3$-belt borders a
$k$-loop with $k\geqslant 4+5+5-9=5$. Also $\min\{l_1,l_2\}=4$.  If $\B_4$
contains a heptagon, then $\W_1,\W_2$ contain no heptagons. If $\B_3$
contains no heptagons, then $l_1=l_2=4$, and one of the sets $\W_1$ and
$\W_2$, say $\W_\a$, contains no heptagons. In both cases we obtain a set
$\W_\a$ without heptagons and a $4$-loop $\L_\a\subset \W_\a$. Then $\L_\a$
is a $4$-belt, else by Lemmas \ref{4-loop-lemma} and \ref{4-dl-lemma} $\B_4$
should have at least $4+5+5+5-14=5$ facets. Applying the same argument to
$\L_\a$ instead of $\B_k$, we have that either $\L_\a$ surrounds on the
opposite side a quadrangle, or it borders a $4$-belt and consists of
hexagons. In the first case by Lemma \ref{loop-lemma} $\L_\a$ consists of
pentagons. Thus we can move inside $\W_\a$ until we finish with a quadrangle.
If $P$ has no quadrangles, then we obtain a contradiction. If $P$ has a
quadrangle $F$, then it has no heptagons, therefore moving inside $\W_\b$ we
should meet some other quadrangle. A contradiction.
\end{proof}
\begin{cor}\label{4-belt-ful}
Fullerenes have no $4$-belts.
\end{cor}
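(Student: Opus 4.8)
The plan is to obtain this as an immediate specialization of Theorem \ref{4-belt-theorem}. A fullerene $P$ is by definition a simple $3$-polytope all of whose facets are pentagons and hexagons; in particular $P$ has no quadrangular facet (indeed no exceptional facet at all), so $P$ satisfies the hypotheses of Theorem \ref{4-belt-theorem}. Since $P$ has no quadrangles, part (1) of that theorem applies and yields that $P$ has no $4$-belts, which is exactly the claim. So the write-up is essentially a one-line deduction.

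If one preferred a self-contained argument instead of citing Theorem \ref{4-belt-theorem}, I would rerun its proof in the fullerene case. By Corollary \ref{flag-3} (via Theorem \ref{3-belt-theorem}) a fullerene is flag. Suppose $P$ had a $4$-belt $\B_4$; since $P$ has no quadrangles, $\B_4$ does not surround a facet, so by Lemma \ref{loop-lemma} it borders an $l_1$-loop $\L_1\subset\W_1$ and an $l_2$-loop $\L_2\subset\W_2$ with $l_1,l_2\geqslant 2$ and $l_1+l_2=b_5+2b_6\leqslant 8$. A $2$-loop would border a $k$-loop with $k\geqslant 5+5-4=6>4$, and a $3$-loop that is not a $3$-belt would border a $k$-loop with $k\geqslant 5+5+5-9=6>4$ (using that all facets are pentagons or hexagons), so in fact $l_1,l_2\geqslant 4$; hence $l_1=l_2=4$ and, by part (3d) of Lemma \ref{loop-lemma}, all four facets of $\B_4$ are hexagons. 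Then $\L_\a$ is itself a $4$-loop that is not of the degenerate type of Lemma \ref{4-loop-lemma} (again the bound forces $\geqslant 6$ facets otherwise), and by Lemma \ref{4-dl-lemma} it is a $4$-belt of hexagons. Iterating this on the far side of $\L_\a$ produces an infinite sequence of pairwise distinct $4$-belts nested inside the disk $\overline{\P_\a}$, contradicting finiteness of $\F$.

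There is no real obstacle here: the statement is a direct corollary, and the only thing to double-check in the self-contained version is that the inequalities on the sizes of the bordered loops (coming from Lemmas \ref{loop-lemma}, \ref{3-loop-lemma}, \ref{4-loop-lemma}) genuinely exclude $l_\alpha=2,3$ when all facets have $5$ or $6$ edges — which they do, with room to spare. I would present the short deduction from Theorem \ref{4-belt-theorem} as the proof.
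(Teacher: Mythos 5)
Your one-line deduction from Theorem \ref{4-belt-theorem}(1) is exactly how the paper obtains this corollary (the paper states it without separate proof, as an immediate specialization). The self-contained rerun you sketch is also a faithful reproduction of the theorem's own argument in the fullerene case, so there is nothing to correct.
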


\begin{lemma}\label{5-dl-lemma}
Let $P$ be a fullerene, and let a $k$-loop $\L_k$ border a $5$-belt $\B_5$.
Then the facets of $\L_k$ are pairwise different.
\end{lemma}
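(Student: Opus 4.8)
The plan is to mimic the argument used for Lemma~\ref{4-dl-lemma}, but now with the richer structure available for fullerenes: $p_3=p_4=p_7=0$, $p_k=0$ for $k\geqslant 7$, and (Corollary~\ref{4-belt-ful}) no $4$-belts. Suppose the $5$-belt $\B_5$ and the $k$-loop $\L_k=(F_{i_1},\dots,F_{i_k})$ border the same simple edge-cycle $\gamma$, and suppose two facets of $\L_k$ coincide. Successive facets in $\L_k$ are distinct by definition, so the repeated facets are non-successive. As in the proof of Lemma~\ref{4-dl-lemma}, if $F_{i_p}=F_{i_q}$ with $p<q$ then every vertex of $F_{i_p}\cap F_{i_{p+1}}$ or of $F_{i_{q}}\cap F_{i_{q+1}}$ lying in $\gamma$ forces a repeated vertex on $\gamma$ unless the neighbouring facets intersect; tracking these forced intersections, I expect to conclude that (after possibly relabelling) $\L_k$ either consists of a facet $F$ together with a few facets meeting $F$ along pairwise non-incident edges, or there are two coincidences producing an even more degenerate configuration. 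In every such case the cycle $\gamma$ degenerates to the boundary of a union of a small number of facets, and Lemma~\ref{b-lemma} then computes the length of the loop bordering it from the other side — that loop is exactly $\B_5$.

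So the core of the argument is a length count: whenever $\L_k$ has a repeated facet, $\gamma=\partial(F\cup(\text{two or more other facets}))$ for some small collection, and Lemma~\ref{b-lemma} gives the number of facets of the loop bordering $\gamma$ from the $\B_5$-side as a sum of the form $\sum (s_r - (\text{number of }\gamma\text{-edges on }F_r)-1)$. Since all facets of a fullerene are pentagons or hexagons ($4\leqslant$ is now $5\leqslant s_r\leqslant 6$), plugging in the minimum values $s_r=5$ will give a number strictly larger than $5$ in each degenerate case — contradicting the fact that $\B_5$ has exactly $5$ facets. Concretely, the analogue of the estimate in Lemma~\ref{4-dl-lemma} becomes $(s_i-4)+(s_j-2)+(s_l-2)\geqslant 5+5+5-8=7>5$ in the "one facet plus two non-adjacent neighbours" case, and the case of a longer repeated block, or of two separate coincidences, only makes the count larger.

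The main obstacle, and where I would spend the most care, is the \emph{combinatorial bookkeeping of which neighbouring facets must intersect}: a $5$-loop (as opposed to the $4$-loop of Lemma~\ref{4-dl-lemma}) has more ways for two non-successive facets to coincide — e.g.\ $F_{i_1}=F_{i_3}$ versus $F_{i_1}=F_{i_4}$ — and for each pattern one must verify, using flagness and the absence of $3$- and $4$-belts, that the only way $\gamma$ stays a \emph{simple} edge-cycle is that certain further facets coincide or intersect, collapsing $\gamma$ to the boundary of a union of few facets. I would handle this by first ruling out a facet occurring three or more times in $\L_k$ (that would force $\gamma$ to pass a vertex of that facet at least three times, or else make $P$ too small), then analysing a single pair $F_{i_p}=F_{i_q}$ according to the "distance" $\min(q-p,\,k-(q-p))$ along the loop, and in each subcase extracting a union-of-facets description of $\gamma$. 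Once the description is in hand, the length estimate via Lemma~\ref{b-lemma} closes every case, exactly as in the $4$-belt lemma.
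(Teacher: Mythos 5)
There is a genuine gap: your argument is a plan whose decisive step is exactly the part you defer. A facet $F_s$ repeats in $\L_k$ precisely when $F_s\cap\gamma_\a$ is disconnected, and you propose to show that every coincidence pattern collapses $\gamma_\a$ to the boundary of a union of ``a small number'' of facets and then to count as in Lemma~\ref{4-dl-lemma}. Neither half is established. First, the collapse is special to the $4$-loop case, where a coincidence leaves only three distinct facets; here $k$ is not bounded by $5$ (Lemma~\ref{b-lemma} only gives $\sum_{r=1}^k(a_r-1)=5$ with all terms nonnegative, so $k$ can well exceed $5$), a repeated facet leaves two whole chains of further facets between its two occurrences, and $\gamma_\a=\partial|\W_\a|$ need not be the boundary of a union of two or three facets. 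Second, without that structure the count is not forced: if $F_s$ occurs twice contributing $a_p$ and $a_q$ edges to $\gamma_\a$, Lemma~\ref{b-lemma} is perfectly consistent with this, and your displayed estimate $(s_i-4)+(s_j-2)+(s_l-2)\geqslant 7>5$ treats only the single configuration ``one facet plus two non-adjacent neighbours'', which is far from the general case. The case analysis you name as the main obstacle is therefore the entire proof, and it is absent.

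For comparison, the paper avoids any enumeration by a short local argument. If $F_s\in\W_\a$ meets two non-successive facets of $\B_5=(F_i,F_j,F_k,F_l,F_r)$, say $F_i$ and $F_k$, then $(F_i,F_j,F_k,F_s)$ is a $4$-loop with $F_i\cap F_k=\varnothing$; since fullerenes have no $4$-belts (Corollary~\ref{4-belt-ful}), this forces $F_j\cap F_s\ne\varnothing$, and flagness makes $F_i\cap F_j\cap F_s$ and $F_j\cap F_k\cap F_s$ vertices. Consequently, whenever $F_s$ meets two facets of $\B_5$ by edges $E_1$ and $E_2$, all edges of one arc of $\partial F_s$ between $E_1$ and $E_2$ lie on $\gamma_\a$; so $F_s\cap\gamma_\a$ is a single arc and $F_s$ occurs only once while walking round $\gamma_\a$. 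If you want to salvage your route, this connectedness statement is precisely what you would have to prove before any counting could close the argument.
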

\begin{proof}
Let $\B_5=(F_i,F_j,F_k,F_l,F_r)$, and let the facet $F_s\in \W_{\alpha}$ be
incident to non-successive facets of $\B_5$, say $F_i$ and $F_k$. We obtain a
$4$-loop $(F_i,F_j,F_k,F_s)$. Since $F_i\cap F_k=\varnothing$, Theorem
\ref{4-belt-theorem} implies $F_j\cap F_s\ne\varnothing$. Then $F_i\cap
F_j\cap F_s$, $F_j\cap F_k\cap F_s$ are vertices, and $F_j\cap \gamma_\a$
consists of one edge $F_j\cap F_s$. Thus if the facet $F_s\in \W_{\alpha}$
intersects two facets from $\B_5$ by $E_1$ and $E_2$, then all the edges in
one of the arcs of $\partial F_s$ between $E_1$ and $E_2$ belong to
$\gamma_\a$, and $F_s$ can not appear twice in $\L_k$.
\end{proof}

\begin{lemma}\label{5-loop-lemma}
Let $P$ be a fullerene. If a simple $5$-loop $\L_5=(F_i,F_j,F_k,F_l,F_r)$ is
not a  $5$-belt, then up to a relabeling it has a form drawn on the
Fig.~~\ref{5-loop-1}. Moreover, it borders a $k$-loop,
$k=s_i+s_j+s_k+s_l+s_r-19\geqslant 6$.
\begin{figure}[h]
\includegraphics[scale=0.3]{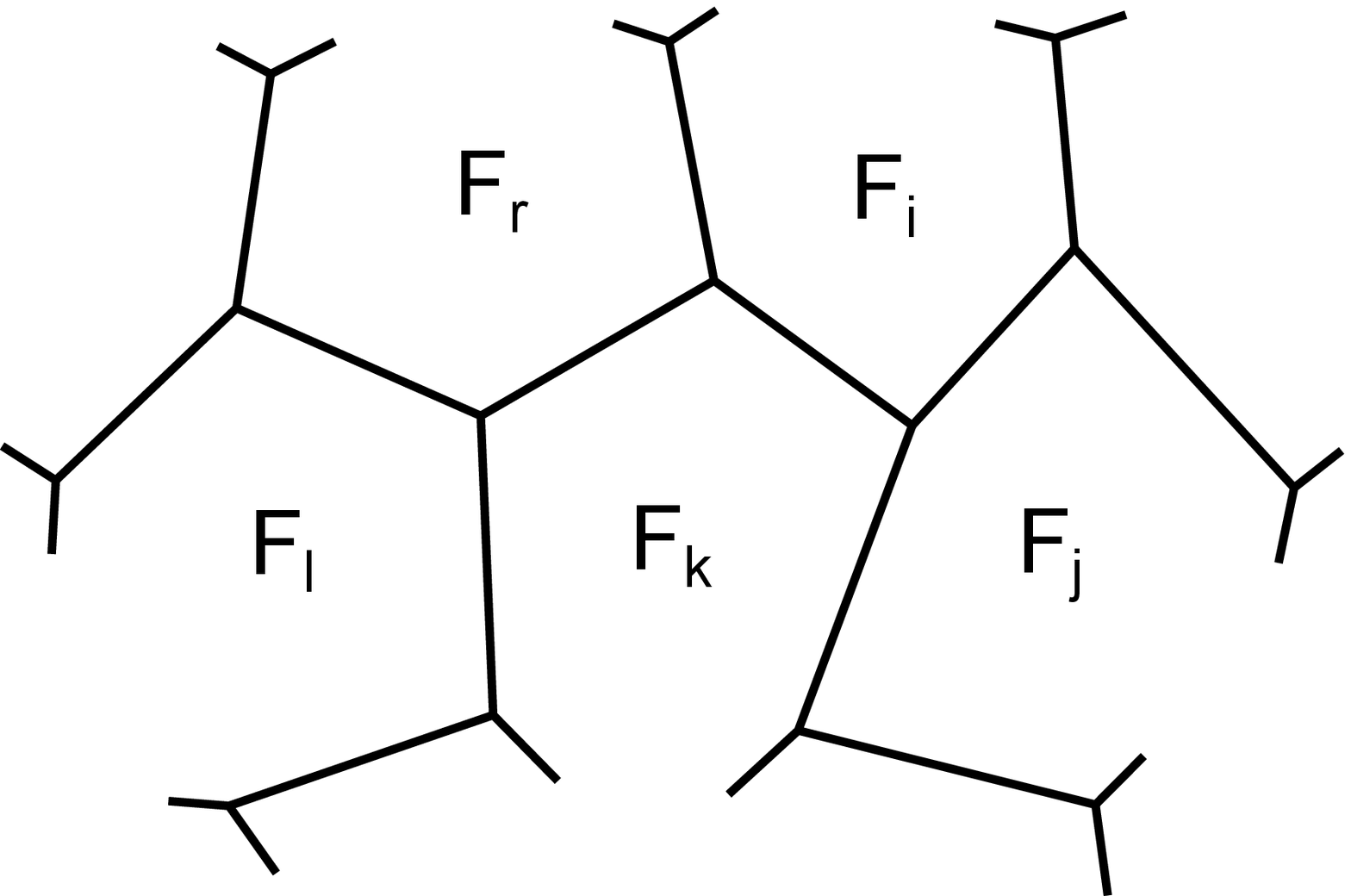}
\caption{$5$-loop}\label{5-loop-1}
\end{figure}
\end{lemma}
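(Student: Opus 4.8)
The plan is to mimic the structure of the proofs of Lemmas \ref{3-loop-lemma} and \ref{4-loop-lemma}, but now with the extra combinatorial richness that a non-belt $5$-loop admits. First I would argue that a non-chordal simple $5$-loop would automatically be a $5$-belt, so $\L_5$ must have a chord, i.e. some pair of non-successive facets $F_p\cap F_q\ne\varnothing$. Since a fullerene is flag (Corollary after Theorem \ref{3-belt-theorem}) and has no $4$-belts (Corollary \ref{4-belt-ful}), I would use Theorem \ref{4-belt-theorem} applied to the $4$-loops obtained by short-cutting along such a chord to control how many chords there can be and how they can be arranged: each chord $F_p\cap F_q$ forces, via the argument of Lemma \ref{5-dl-lemma}, the third facet in the corresponding $4$-loop to meet $F_q$ (resp.\ $F_p$) as well, producing common vertices. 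Combining this over all chords one sees that the chords cannot be ``nested'' in a way that creates a $4$-belt or a $3$-belt, which leaves (up to relabeling) exactly the configuration on Fig.~\ref{5-loop-1}: two facets meeting non-successive members of the loop along edges sharing vertices with the intervening facet, so that $\L_5$ ``folds'' around a short edge-path on the pinched side. This is the step I expect to be the main obstacle: carefully enumerating the possible chord patterns of a simple $5$-loop and ruling out all but one using only flagness and the absence of $3$- and $4$-belts.

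Once the combinatorial type of $\L_5$ is pinned down, the boundary computation is routine and parallels Lemmas \ref{3-loop-lemma} and \ref{4-loop-lemma}. On the pinched side the facets $F_i,\dots,F_r$ share some collection of vertices (and at most one short edge) determined by the chord structure of Fig.~\ref{5-loop-1}; as in the earlier lemmas, I would check that no two of the remaining boundary vertices coincide — if they did, they would have to lie on one of the edges $F_p\cap F_q$ with $(p,q)$ successive, and then that edge's second endpoint would be traversed twice, contradicting simplicity of the resulting edge-cycle, or it would force an unwanted small belt. Hence the outer vertices of $\vert(F_i)\cup\dots\cup\vert(F_r)$, with the shared vertices removed, form a simple edge-cycle $\gamma=\partial|\L_5|$, and $\L_5$ borders the $k$-loop obtained by walking round $\gamma$ on the other side.

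The count of $k$ then follows from Lemma \ref{b-lemma} exactly as before: each $s$-gon $F$ of $\L_5$ contributes $s-2$ to $\sum a^\b_r$, minus a correction of $1$ for every chord endpoint at $F$ (a vertex of $F$ lying in $\int|\L_5|$), and the total correction over the configuration of Fig.~\ref{5-loop-1} is, after subtracting the number of facets $5$, equal to $14$, giving $k=\sum_{r}s_r-19=s_i+s_j+s_k+s_l+s_r-19$. Finally, since every facet of a fullerene is a pentagon or a hexagon, $s_i,\dots,s_r\geqslant 5$, whence $k\geqslant 25-19=6$, which is the stated bound. I would also remark that equality $k=6$ forces all five facets to be pentagons, matching the picture, which will be convenient for the later case analysis.
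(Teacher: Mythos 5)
Your outline follows the paper's proof: chord exists, shortcut $4$-loop cannot be a $4$-belt, pin down the configuration, then count via Lemma \ref{b-lemma}. The one step you defer as ``the main obstacle'' --- enumerating the chord patterns --- is the entire content of the lemma, so as written there is a gap there; but it closes in two moves, exactly along the lines you sketch. Some non-successive pair intersects, say $F_i\cap F_k\ne\varnothing$, and flagness makes $F_i\cap F_j\cap F_k$ a vertex. Then $(F_i,F_k,F_l,F_r)$ is a simple $4$-loop, which cannot be a $4$-belt by Corollary \ref{4-belt-ful}, so $F_i\cap F_l\ne\varnothing$ or $F_k\cap F_r\ne\varnothing$; in either case one facet of $\L_5$ meets both of its non-neighbours, which is the configuration of Fig.~\ref{5-loop-1} up to relabeling. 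In that labeling there are no further chords: $F_i\cap F_l\ne\varnothing$ would make $(F_i,F_k,F_l)$ a $3$-belt, because the two endpoints of the edge $F_i\cap F_k$ are already the vertices $F_i\cap F_j\cap F_k$ and $F_i\cap F_k\cap F_r$ and $F_l\ne F_j,F_r$ by simplicity; and $F_j\cap F_l\ne\varnothing$ would make $F_j\cap F_k\cap F_l$ a vertex and hence $F_k$ a quadrangle, impossible in a fullerene. Your boundary-simplicity argument and the count $(s_i-4)+(s_j-3)+(s_k-5)+(s_l-3)+(s_r-4)=s_i+s_j+s_k+s_l+s_r-19\geqslant 6$ then match the paper's. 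One correction: the tool you want for the shortcut $4$-loop is Theorem \ref{4-belt-theorem} (or Corollary \ref{4-belt-ful}) directly, not ``the argument of Lemma \ref{5-dl-lemma}'', whose hypothesis (a facet outside a $5$-belt meeting two of its members) is not present here.
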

\begin{proof}
If $\L_5$ is not a $5$-belt, then some two non-successive facets intersect.
Without loss of generality assume $F_i\cap F_k\ne\varnothing$. Then $F_i\cap
F_j\cap F_k$ is a vertex. For $4$-loop $(F_i,F_k,F_l,F_r)$ from Theorem
\ref{4-belt-theorem} we obtain $F_i\cap F_l\ne\varnothing$ or $F_k\cap
F_r\ne\varnothing$. Both cases give the declared fragment. Let the labeling
be as on Fig \ref{5-loop-1}. We have $F_i\cap F_l=\varnothing$, else
$(F_i,F_k,F_l)$ is a $3$-belt, since $F_l\ne F_j$, $F_l\ne F_r$. By the
symmetry $F_r\cap F_j=\varnothing$. We have $F_j\cap F_l\ne\varnothing$, else
$F_j\cap F_l\cap F_k$ is a vertex and $F_k$ is a quadrangle. Then if two
vertices in $\vert(F_i)\cup\vert(F_j)\cup\vert(F_k)\cup\vert(F_l)\cup
\vert(F_r)\setminus\{F_i\cap F_j\cap F_k,F_i\cap F_k\cap F_r, F_k\cap F_l\cap
F_r\}$ coincide, they lie in $F_i\cap F_j$, $F_j\cap F_k$,  $F_k\cap F_l$,
$F_l\cap F_r$ or $F_r\cap F_i$. Therefore these vertices form a simple
edge-cycle $\partial|\L_5|$ and $\L_5$ borders a $k$-loop. By Lemma
\ref{b-lemma} we have
$k=(s_i-4)+(s_j-3)+(s_k-5)+(s_l-3)+(s_r-4)=s_i+s_j+s_k+s_l+s_r-19 \geqslant
25-19=6$.
\end{proof}
\begin{theorem}\label{5-belt-theorem} Let $P$ be a fullerene.
\begin{enumerate}
\item Facets that surround a pentagon, form a $5$-belt. There are $12$ of
    them.

\item If $P$ has a $5$-belt $\B$ not of this type, then
\begin{enumerate}
\item $\B$ consists of hexagons incident with neighbors by opposite
    edges;
\item $P$ consists of $k$ belts of this type bordering each other and
    two caps \ref{Cap-1}a).
\begin{figure}[h]
\begin{tabular}{ccc}
\includegraphics[scale=0.2]{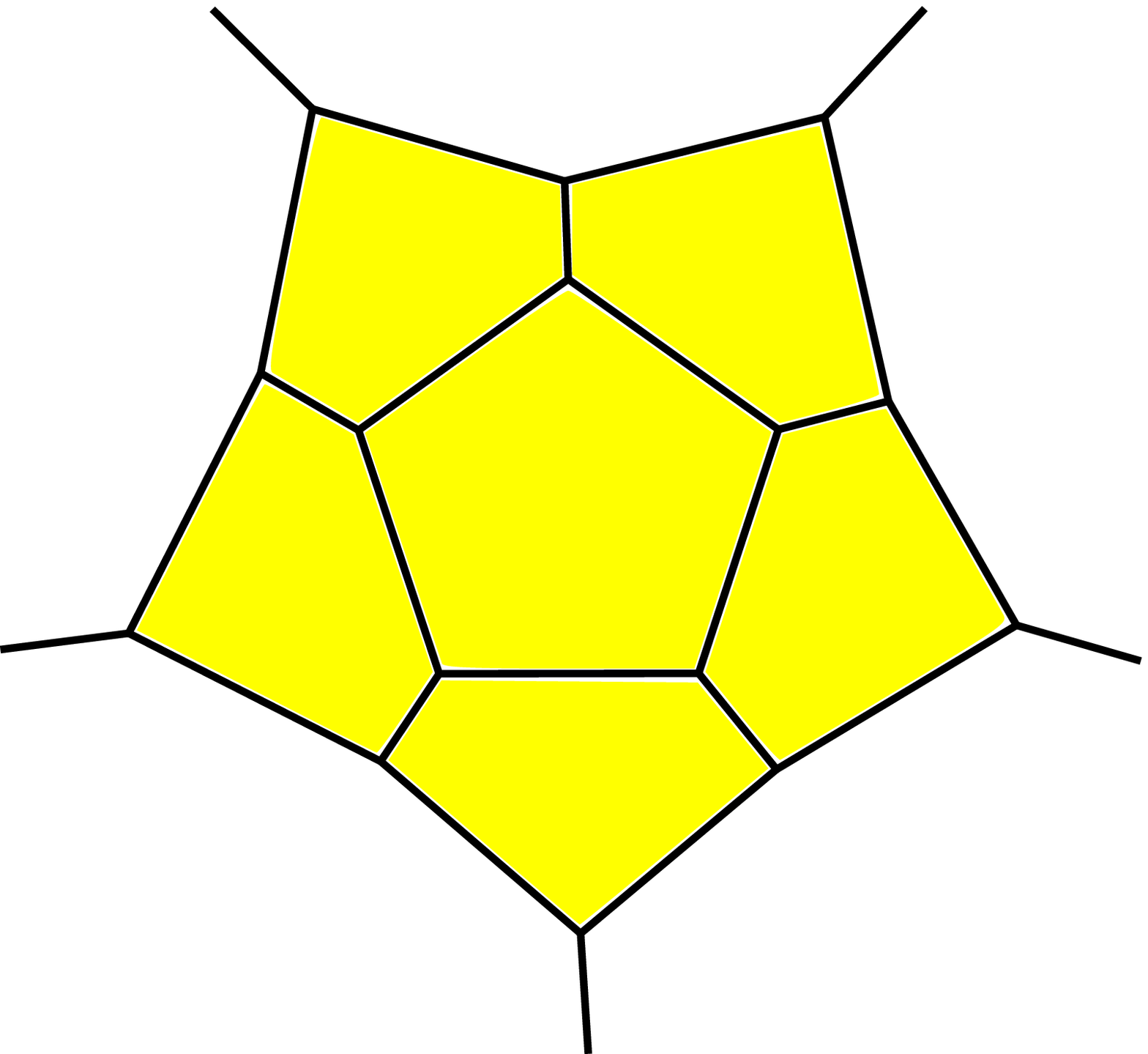}&\includegraphics[scale=0.17]{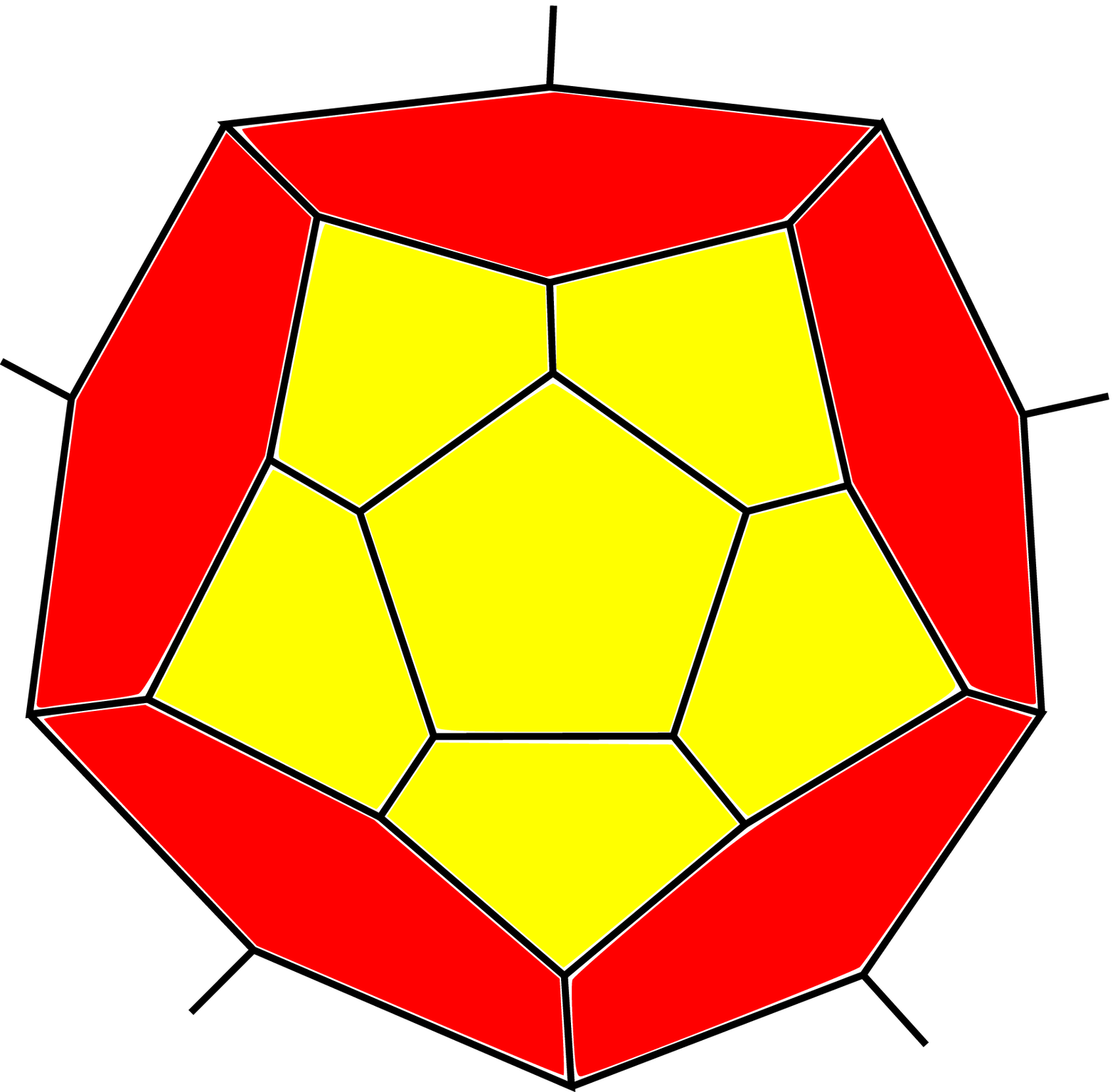}&\includegraphics[scale=0.15]{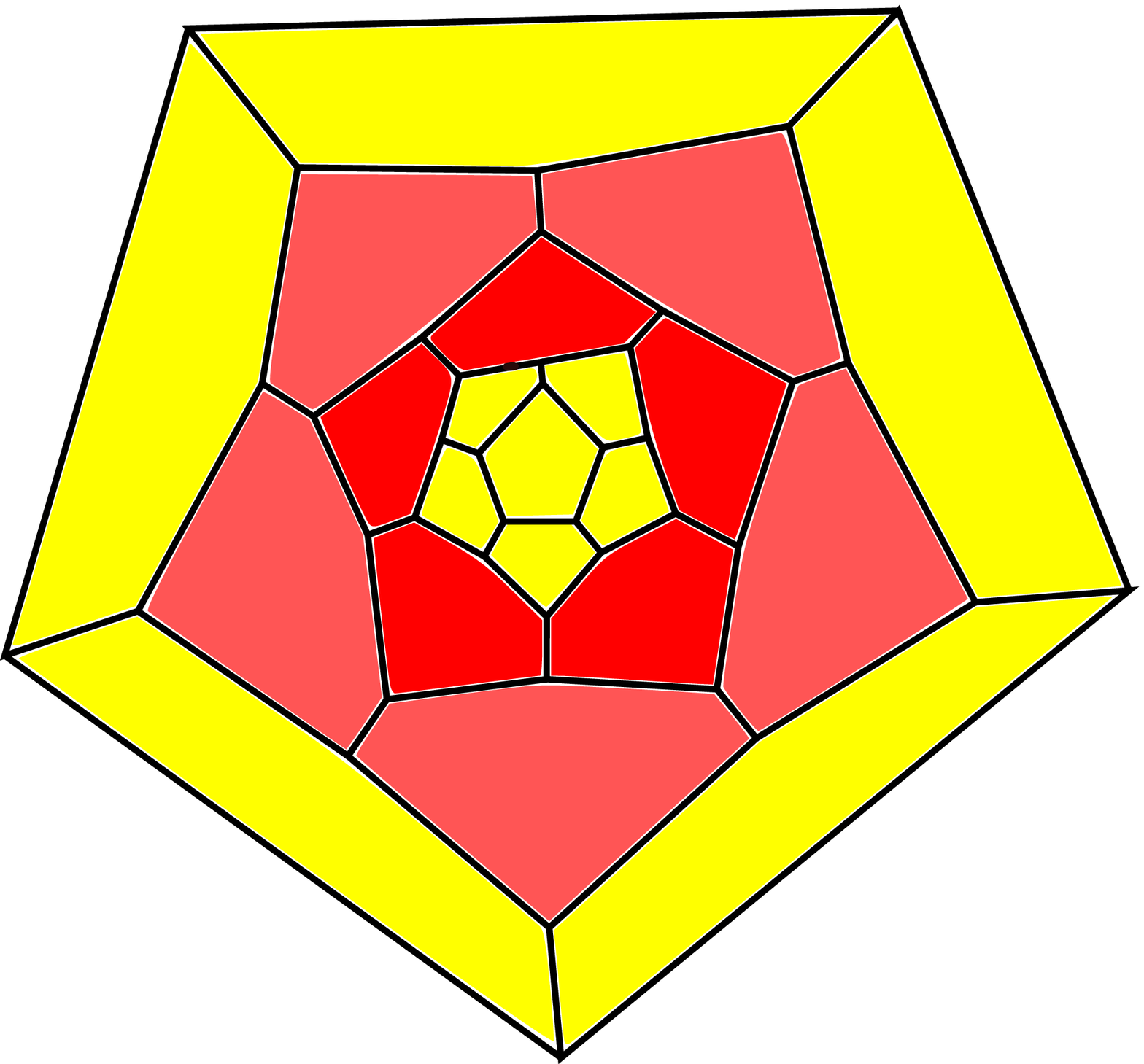}\\
a)&b)&c)
\end{tabular}
\caption{}\label{Cap-1}
\end{figure}
\item There are exactly $12+k$ five-belts.
\end{enumerate}
\item If a fullerene $P$ has fragment \ref{Cap-1}a), then $P$ has the
    form as in (2) for $k\geqslant 0$.
\end{enumerate}
\end{theorem}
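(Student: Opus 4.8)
The plan is to prove the three parts of Theorem \ref{5-belt-theorem} in order, relying heavily on the machinery of Lemmas \ref{loop-lemma}, \ref{5-dl-lemma} and \ref{5-loop-lemma}, together with Corollary \ref{4-belt-ful}.

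\textbf{Part (1).} By Lemma \ref{facet-belt} (applicable since a fullerene is flag by Theorem \ref{3-belt-theorem}), every pentagon is surrounded by a $5$-belt. Since a fullerene has $p_5=12$ pentagons, this gives $12$ such belts. These are pairwise distinct because a $5$-belt of this type determines its central pentagon uniquely: the central pentagon is the one facet of $\W_1$ (or $\W_2$) forced by Lemma \ref{loop-lemma}(1) when the belt surrounds a $5$-gon on one side; and two different pentagons cannot be surrounded by the same belt since then they would share a boundary, contradicting the fact (Corollary \ref{4-belt-ful}, flagness) that incident pentagons cannot both be enclosed. I would spell out that no two of the $12$ coincide by noting the surrounded facet is recovered from $\W_\alpha$.

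\textbf{Part (2).} Suppose $\B$ is a $5$-belt not surrounding a pentagon. Apply Lemma \ref{loop-lemma} with $k=5$, $p_7=0$: since $\B$ does not surround a pentagon on either side, we are in case (3), so $\B$ borders an $l_1$-loop $\L_1$ and an $l_2$-loop $\L_2$ with $l_1+l_2 = 2\cdot 5 - 2b_4 - b_5 + b_7 = 10 + 2b_6$ where $b_5+b_6=5$; thus $l_1+l_2 = 20 - 2b_5$ and $l_1,l_2\geqslant 5$ would force, by part (3d) of Lemma \ref{loop-lemma} with $b_7=0$, that $l_1=l_2=5$, $b_5=0$, $b_6=5$. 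I must first rule out $\min\{l_1,l_2\}<5$: a $2$-loop borders a $k$-loop with $k\geqslant 5+5-4=6>5$ (last line of Lemma \ref{loop-lemma}), a $3$-loop that is not a $3$-belt borders a $k$-loop with $k\geqslant 4+5+5-9=5$ but flagness kills $3$-belts that border $5$-belts in this configuration — more carefully, a $3$-loop inside $\W_\alpha$ that is a $3$-belt contradicts flagness, and if not a $3$-belt Lemma \ref{3-loop-lemma} forces the bordered loop to have $\geqslant 4+5+5-9=5$ facets, consistent only in the equality case which I then analyze; and a $4$-loop $\L_\alpha$ that is not a $4$-belt borders, by Lemmas \ref{4-loop-lemma} and \ref{5-dl-lemma}, a $k$-loop with $k\geqslant 4+5+5+5-14=5$, again forcing equality and special structure which I rule out since it would produce a quadrangle (none exist). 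By Lemma \ref{5-dl-lemma} the facets of each $\L_\alpha$ are distinct, so $\L_\alpha$ is a simple $5$-loop; if it is not a $5$-belt, Lemma \ref{5-loop-lemma} forces it to border a $k$-loop with $k\geqslant 6$, contradicting $l_\beta=5$. Hence both $\L_1$ and $\L_2$ are $5$-belts of the same type (they consist of hexagons, since $b_6=5$), and by Lemma \ref{loop-lemma}(3a) each hexagon of $\B$ meets each neighbor in exactly one edge of $\gamma_\alpha$ — i.e. it has one edge on each bounding cycle, hence is incident to its two neighbors in $\B$ by opposite edges, proving (2a). Iterating outward from $\B$, each successive $5$-belt is again of this type, and since $\partial P$ is compact the process must terminate: it can only stop when some $\L_\alpha$ surrounds a pentagon (case (1) of Lemma \ref{loop-lemma}), i.e. at a cap as in Fig.~\ref{Cap-1}a). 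This gives the decomposition of $P$ into $k$ belts of this type plus two caps, proving (2b). For (2c), the five-belts of $P$ are exactly the $12$ of part (1) plus the $k$ hexagonal belts in the decomposition — I must argue no other $5$-belt exists, which follows because any $5$-belt either surrounds a pentagon (giving one of the $12$) or, by the above, is one of the $k$ layers; distinctness of the $k$ layers from each other and from the $12$ is clear since the hexagonal layers contain no pentagon.

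\textbf{Part (3).} If $P$ contains the fragment of Fig.~\ref{Cap-1}a), that fragment is (or contains) a pentagon surrounded by five hexagons; the outer boundary of this cap is bordered on the other side by a $5$-loop $\L$, which by the counting in Lemma \ref{loop-lemma} (the cap being a degenerate ``$k=0$'' case of the decomposition) is a $5$-belt. If that $5$-belt surrounds a pentagon, $P$ is the dodecahedron-like case $k=0$; otherwise it is a $5$-belt not of the first type, and part (2) applies with $k\geqslant 1$. Either way $P$ has the form described in (2) with $k\geqslant 0$.

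\textbf{Main obstacle.} The delicate point is the termination/uniqueness bookkeeping in Part (2): ruling out the small-loop cases $\min\{l_1,l_2\}\in\{2,3,4\}$ requires carefully tracking the equality cases in Lemmas \ref{3-loop-lemma}, \ref{4-loop-lemma}, \ref{5-loop-lemma} and \ref{5-dl-lemma} and using that fullerenes have no quadrangles and no heptagons, and then arguing that the outward iteration of hexagonal $5$-belts cannot loop back on itself (so it genuinely exhausts $\partial P$ and ends in two caps) rather than spiral — this uses compactness of $\partial P$ and the disk structure from Lemma \ref{belt-lemma} applied repeatedly. The enumeration ``exactly $12+k$'' in (2c) is then a matter of checking that every $5$-belt has been accounted for, which I expect to be routine once the structural decomposition is established.
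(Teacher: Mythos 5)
Your proof follows essentially the same route as the paper: part (1) via Lemma \ref{facet-belt}, part (2) by ruling out $\min\{l_1,l_2\}\leqslant 4$ with Lemmas \ref{3-loop-lemma}, \ref{4-loop-lemma}, \ref{5-dl-lemma}, \ref{5-loop-lemma} and then iterating hexagonal $5$-belts outward until each side terminates in a cap, and part (3) by growing outward from the cap exactly as in the paper. There are only harmless local slips: $l_1+l_2=2k-2b_4-b_5+b_7=10-b_5=5+b_6$ (not $10+2b_6$ or $20-2b_5$), the conclusion you want comes correctly from item (3d) of Lemma \ref{loop-lemma} anyway; and each hexagon of $\B$ has \emph{two} (not one) edges on each boundary cycle ($\a_j=\b_j=2$), which is precisely what forces its two belt-edges to be opposite.
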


\begin{defin}
We will call the family of combinatorial polytopes arising in (2) by
\emph{Family I}. They are enumerated by the parameter $k\geqslant 0$. For
$k=0$ we have the dodecahedron. The $k$-th fullerene has $p_6=5k$.
\end{defin}
\begin{proof}
(1) Follows from Lemma \ref{facet-belt}.

(2) Let the $5$-belt $\B_5$ do not surround a pentagon. By Lemma
\ref{loop-lemma} it borders an $l_1$-loop $\L_1\subset \W_1$ and an
$l_2$-loop $\L_2\subset \W_2$, $l_1,l_2\geqslant 3$, $l_1+l_2\leqslant 10$.
From Theorem \ref{3-belt-theorem} and Lemma \ref{3-loop-lemma} we obtain
$l_1,l_2\geqslant 4$. From Lemmas  \ref{4-loop-lemma} and \ref{4-dl-lemma},
and Theorem \ref{4-belt-theorem} we obtain that $l_1,l_2\geqslant 5$ and all
facets in $\B_5$ are hexagons. From Lemmas \ref{5-dl-lemma} and
\ref{5-loop-lemma} we obtain that $\L_1$ and $\L_2$ are $5$-belts. Moving
inside $\W_1$ we obtain a series of hexagonal $5$-belts, and this series can
stop only if the last $5$-belt $\B_l$ surrounds a pentagon. Since $\B_l$
borders a $5$-belt,  Lemma \ref{loop-lemma} implies that $\B_l$ consists of
pentagons, which have $(2,2,2,2,2)$ edges on the common boundary with a
$5$-belt. We obtain the fragment \ref{Cap-1}a. Moving from this fragment
backward we obtain a series of hexagonal $5$-belts including $\B_5$ with
facets having $(2,2,2,2,2)$ edges on both boundaries. This series can finish
only with fragment \ref{Cap-1}a again. Thus any belt not surrounding a
pentagon belongs to this series and the number of $5$-belts is equal to
$12+k$.

(3) By Lemma \ref{facet-belt} the outer $5$-loop of the fragment \ref{Cap-1}a
is a $5$-belt. By Lemma \ref{5-dl-lemma} it borders a simple $5$-loop. By
Lemma \ref{5-loop-lemma} it is a $5$-belt. If this belt surrounds a pentagon,
then we obtain a combinatorial dodecahedron (case $k=0$). If not, then $P$
has the form we need by (2).
\end{proof}

\begin{theorem}\label{131-theorem}
If a fullerene $P$ has the fragment from Fig.~\ref{131313}a), then $P$ has
the following structure.
\begin{figure}[h]
\begin{center}
\begin{tabular}{cccc}
\includegraphics[height=2.75cm]{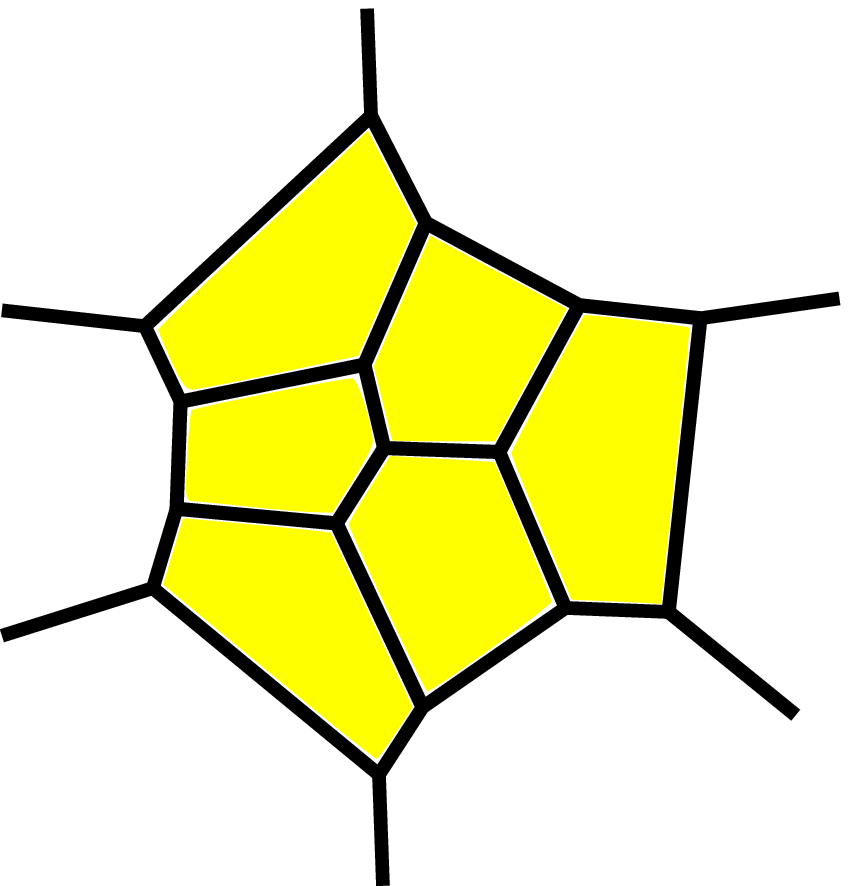}&\includegraphics[height=2.75cm]{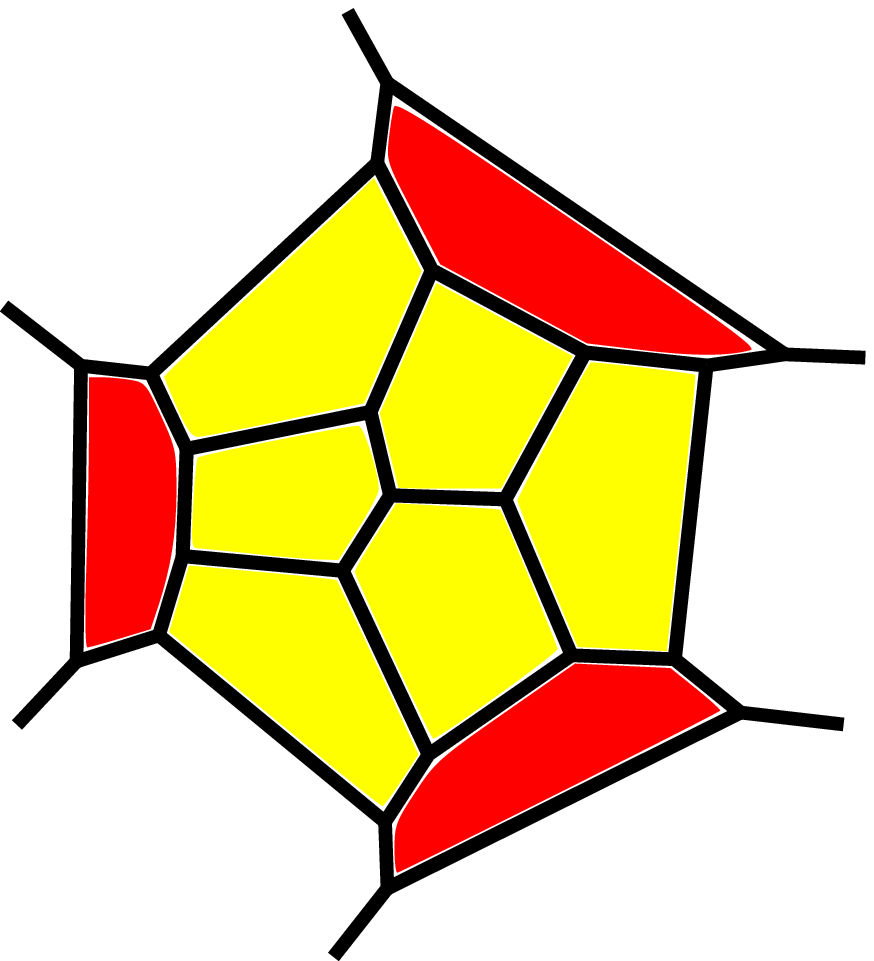}&
\includegraphics[height=2.75cm]{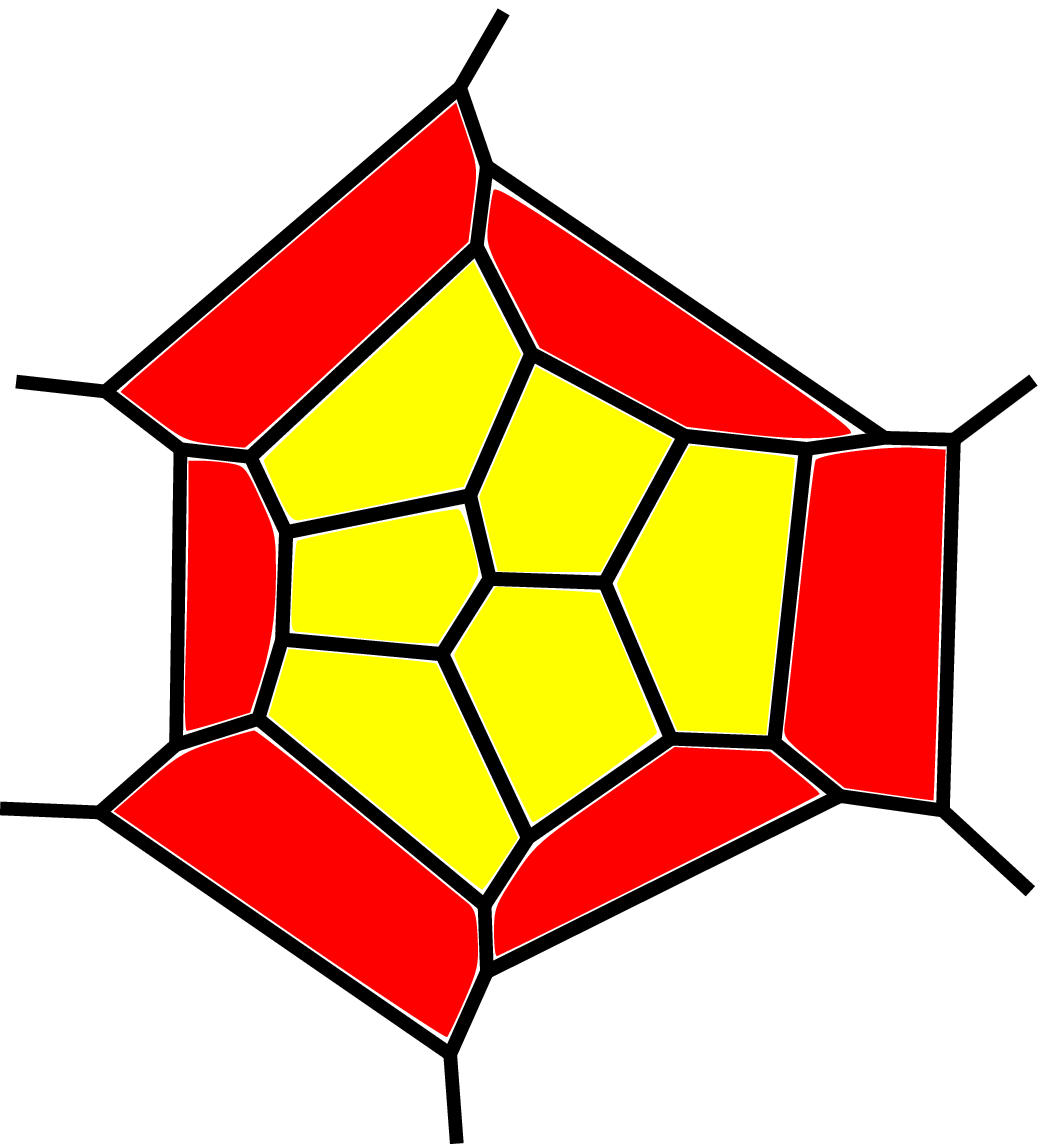}&\includegraphics[height=2.75cm]{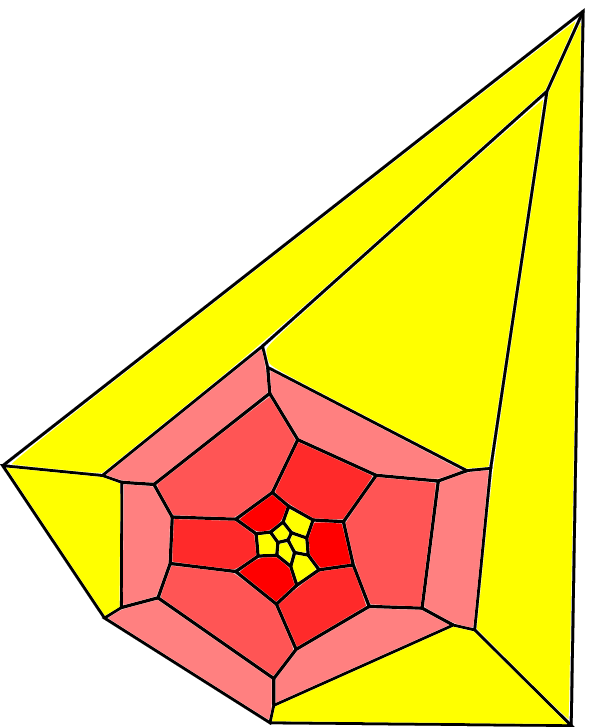}\\
a)&b)&c)&d)
\end{tabular}
\end{center}
\caption{}\label{131313}
\end{figure}
\begin{enumerate}
\item Start with a).
\item Make $k$ steps of the following type: add three hexagons incident
    to the faces with single edges on the boundary.
\item After each step the boundary facets have still $(1,3,1,3,1,3)$
    edges on the boundary.
\item In the end glue up the fragment a).
\end{enumerate}
\end{theorem}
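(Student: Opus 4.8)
The plan is to mimic closely the structure of the proof of Theorem \ref{5-belt-theorem}, replacing the rôle of $5$-belts (loops with all facets having $(2,2,\dots)$-type incidence with the boundary) by loops whose facets carry $(1,3)$-patterns on their two bounding cycles. First I would fix the fragment from Fig.~\ref{131313}a): its outer boundary is a simple edge-cycle $\gamma$, and the facets just inside $\gamma$ (the "boundary facets" of the fragment) form a loop $\L$ in which the facets alternately meet $\gamma$ in a single edge or in three consecutive edges. Using Lemma \ref{facet-belt} together with the absence of $3$- and $4$-belts on a fullerene (Theorem \ref{3-belt-theorem}, Corollary \ref{4-belt-ful}), I would first argue that $\L$ is in fact a genuine $k$-belt for the appropriate $k$; the count of boundary edges of the facets forces $k$ to be the right value, and Lemmas \ref{3-loop-lemma}, \ref{4-loop-lemma}, \ref{5-loop-lemma} rule out the degenerate ways a short loop could fail to be a belt. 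This belt $\B$ borders, by Theorem \ref{Jordan-Th} and Lemma \ref{belt-lemma}, a loop $\L'$ on the far side; applying Lemma \ref{loop-lemma} to $\B$, and using $p_7=0$, $p_k=0$ for $k\geqslant 7$, $p_3=p_4=0$, I would pin down $\L'$ — the arithmetic of Lemma \ref{loop-lemma}(3) (the identity $l_1+l_2=2k-2b_4-b_5+b_7$ together with $b_4=b_5=0$) should show that $\L'$ is again a belt with the same $(1,3,1,3,\dots)$ incidence pattern, forcing all facets involved to be hexagons.

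The heart of the argument is then an inductive "peeling" exactly as in Theorem \ref{5-belt-theorem}(2): starting from the fragment \ref{131313}a) we move one belt at a time toward the other side of $\partial P$. At each stage we have a belt $\B_i$ all of whose facets are hexagons meeting each of their two neighbouring cycles in the $(1,3,1,3,\dots)$ pattern; adding the three hexagons incident to the facets that have a single edge on the current boundary (step (2) of the statement) produces the next cycle, again with a $(1,3,1,3,1,3)$ pattern (step (3)). The process must terminate since $p_6$ is finite, and — since a fullerene has no quadrangles and no heptagons — the only way the chain of hexagonal $(1,3)$-belts can stop is by closing up into a copy of the fragment \ref{131313}a) again (step (4)). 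To get this I would run the same case analysis on the terminal loop as in the $5$-belt proof: a loop bordering such a belt cannot introduce a quadrangle or heptagon, and Lemma \ref{5-loop-lemma} (or its analogue here) shows the only consistent closure is the fragment a).

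I expect the main obstacle to be the bookkeeping that shows the $(1,3,1,3,1,3)$ pattern on one bounding cycle of a hexagonal belt is \emph{forced} to reproduce itself on the opposite cycle, rather than degenerating into some other distribution of boundary edges. This is where Lemma \ref{loop-lemma} must be used carefully: one has to check that the facets of $\L'$ again alternate between touching $\gamma'$ in one edge and in three edges, and that consecutive facets of $\L'$ meet along edges and not merely at vertices (so that $\L'$ really is a loop in the sense required, and after the no-$3$-belt/no-$4$-belt reductions an honest belt). A secondary subtlety is the very first and very last steps — verifying that the outer loop of fragment \ref{131313}a) is a belt (not a loop with repeated facets) and that the two "caps" glue correctly — which I would handle exactly as in part (3) of Theorem \ref{5-belt-theorem}, invoking Lemma \ref{5-dl-lemma} and Lemma \ref{5-loop-lemma} to exclude repetitions and degenerate closures. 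Once these points are settled, the statement (1)–(4) is just a restatement of the peeling procedure.
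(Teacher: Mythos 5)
There is a genuine gap at the very first step of your plan. You propose to show that the boundary loop $\L$ of the fragment \ref{131313}a) is a $6$\emph{-belt} and then drive the induction with Lemma~\ref{loop-lemma}. But this loop is not a belt: the fragment contains three pentagons sharing a common vertex, and any two of these three pentagons share an edge while being non-successive in the $6$-loop around the fragment's boundary, which violates the defining condition $F_{i_p}\cap F_{i_q}=\varnothing$ for non-consecutive $p,q$. Moreover, the lemmas you invoke to dismiss ``degenerate'' loops (\ref{3-loop-lemma}, \ref{4-loop-lemma}, \ref{5-loop-lemma}) only treat $3$-, $4$- and $5$-loops; the paper has no analogous statement for $6$-loops, and indeed none is true here. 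So the belt machinery and the arithmetic identity $l_1+l_2=2k-2b_4-b_5+b_7$ of Lemma~\ref{loop-lemma} are simply not available for this loop.

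Even setting aside the belt issue, you correctly identify the central difficulty --- forcing the $(1,3,1,3,1,3)$ pattern to reproduce on the next cycle --- but the proposed belt arithmetic only controls the \emph{sum} $\sum(\a_j-1)$, not the individual distribution of edges, so it cannot distinguish $(1,3,1,3,1,3)$ from, say, $(2,2,2,2,2,2)$. The paper handles this differently: its Lemma~\ref{131313-lemma} works directly with simple $6$\emph{-loops} (never claiming they are belts) and establishes, by an explicit case analysis built on the no-$3$-belt and no-$4$-belt results, Corollary~\ref{4-belt-ful}, and Theorem~\ref{5-belt-theorem} applied to an auxiliary $5$-loop, the dichotomy ``close up into the cap of Fig.~\ref{131313-lem-fig}b)'' versus ``the three new facets are hexagons and the next $6$-loop is again simple with pattern $(1,3,1,3,1,3)$.'' The theorem then follows by iterating this lemma. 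Your outline recognizes the right global shape (a peeling induction analogous to Theorem~\ref{5-belt-theorem}), but the analytic engine you propose --- belts plus Lemma~\ref{loop-lemma} --- does not work here, and the substantive content of the paper's argument, the case analysis of Lemma~\ref{131313-lemma}, is not replaced by anything in your proposal.
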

\begin{defin}
We will call the family of combinatorial fullerenes arising in\linebreak
Theorem~\ref{131-theorem} by \emph{Family II}. They are enumerated by the
parameter $k\geqslant 0$. The fullerene with $k=0$ is the dodecahedron.  The
$k$-th fullerene has~$p_6=3k$.
\end{defin}
\begin{rem}
In the literature fullerenes like those in Family I and Family II are called
\emph{capped nanotubes}.
\end{rem}
\begin{proof} We need the following lemma.
\begin{lemma}\label{131313-lemma}
Let $P$ be a fullerene. Let a $6$-loop $\L_1=(F_p,F_t,F_q,F_u,F_v,F_w)$
border a simple $6$-loop $\L_2=(F_i,F_j,F_k,F_l,F_r,F_s)$ with
$(1,3,1,3,1,3)$ edges on the boundary (Fig.~\ref{131313-lem-fig}a). Then
$\L_6$ is simple and either forms the fragment on
Fig.~\ref{131313-lem-fig}b), or $F_p,F_q,F_v$ are hexagons and the $6$-loop
$\L_3=(F_j,F_q,F_l,F_v,F_s,F_p)$ is simple and has $(1,3,1,3,1,3)$ edges on
the boundary component intersecting $F_t$.
\end{lemma}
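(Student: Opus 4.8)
The plan is to exploit the combinatorial rigidity forced by the $(1,3,1,3,1,3)$ boundary pattern on $\L_2$ together with the no-$3$-belt and no-$4$-belt properties of a fullerene (Corollary \ref{4-belt-ful} and Theorem \ref{3-belt-theorem}). First I would set up notation: label the facets of $\L_2$ so that $F_i,F_k,F_r$ are the three facets contributing a single edge to the boundary edge-cycle $\gamma$ bordered by $\L_1$, and $F_j,F_l,F_s$ are the three contributing three edges each; then $\L_1$ walks round $\gamma$, and the total number of edges of $\gamma$ is $3\cdot 1+3\cdot 3=12$, so by Lemma \ref{b-lemma}(2) $\L_1$ is a $6$-loop precisely with the edge-distribution that makes it a valid loop. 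The key first step is to pin down, edge by edge, which facet of $\L_1$ meets which facet of $\L_2$: a facet of $\L_2$ with three boundary edges must meet three consecutive facets of $\L_1$, while a facet with one boundary edge meets exactly one facet of $\L_1$; chasing this around $\gamma$ shows that each of the three "single-edge" facets $F_i,F_k,F_r$ shares its unique boundary edge with a facet of $\L_1$ that is simultaneously adjacent (across an edge of $\gamma$) to its two neighbors $F_j,F_s$ (resp.\ $F_j,F_l$; resp.\ $F_l,F_s$) in $\L_2$. Call these three facets $F_p$ (meeting $F_i$, and adjacent along $\gamma$ to $F_s$ and $F_j$), $F_q$ (meeting $F_k$), $F_v$ (meeting $F_r$); the remaining facets $F_t,F_u,F_w$ of $\L_1$ then each sit opposite one of the three-edge facets $F_j,F_l,F_s$.

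Next I would prove $\L_1$ is simple and analyze the two alternatives. Since $P$ is flag, any facet appearing twice in $\L_1$ would have to intersect two non-successive facets of $\L_2$; one then forms a short loop through that facet and two facets of $\L_2$ and derives a $3$- or $4$-belt, or a $3$/$4$-loop that by Lemmas \ref{3-loop-lemma}, \ref{4-loop-lemma} borders a loop too long to fit, exactly as in the proof of Lemma \ref{5-dl-lemma}. This rules out repetitions, so $\L_1$ is a simple $6$-loop. Now the dichotomy comes from the sizes of $F_p,F_q,F_v$: each of these is a pentagon or hexagon, and it has exactly two edges on $\gamma$ (namely the edges of $\gamma$ shared with the neighbors of the single-edge facet it is glued to) plus the one edge glued to $F_i$ (resp.\ $F_k,F_r$), accounting for three of its edges on the $\gamma$-side; the remaining $s-3$ edges (so $2$ or $3$ edges) lie on the far boundary. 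If all of $F_p,F_q,F_v$ are pentagons, each contributes a single edge to the outer boundary of $\L_1$ between two three-edge facets, and the facets $F_t,F_u,F_w$ being adjacent to the three-edge facets $F_j,F_l,F_s$ close up into the fragment of Fig.~\ref{131313-lem-fig}b) — here I would check the outer $6$-loop $(F_p,F_t,F_q,F_u,F_v,F_w)$ has the requisite structure by a direct edge count. Otherwise at least one of $F_p,F_q,F_v$ is a hexagon; I claim then all three are, because the outer boundary of $\L_1$ must again be a closed edge-cycle bordered by a $6$-loop with a consistent $(1,3,1,3,1,3)$ or $(2,2,2,2,2,2)$ pattern, and mixing a pentagon with a hexagon among $\{F_p,F_q,F_v\}$ produces an inconsistent pattern that forces an illegal short belt. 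When all three are hexagons, $F_p$ has two boundary edges on the far side, and the $6$-loop $\L_3=(F_j,F_q,F_l,F_v,F_s,F_p)$ — alternating the three-edge facets of $\L_2$ with the hexagons $F_q,F_v,F_p$ — is seen to border $F_t$ (and the rest of $P$) with the single/triple edge pattern recomputed to be $(1,3,1,3,1,3)$ again by Lemma \ref{b-lemma}.

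The main obstacle I anticipate is the bookkeeping in the hexagon case: proving that $\L_3$ is \emph{simple} and verifying that its boundary component facing $F_t$ really carries the $(1,3,1,3,1,3)$ pattern requires tracking how many edges each of $F_j,F_l,F_s$ and each of $F_p,F_q,F_v$ contributes to that component. The simplicity of $\L_3$ follows the same template as Lemma \ref{5-dl-lemma} (a repeated facet yields a forbidden $3$- or $4$-belt in the flag polytope $P$), but one must be careful that $F_p,F_q,F_v$, now hexagons lying partly in $\L_1$ and partly further out, do not coincide with $F_t,F_u,F_w$ — this is where the hypothesis $p_k=0$ for $k\neq 5,6$ and the flagness are used decisively. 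Once the incidence pattern of $\L_3$ is established, the edge count via Lemma \ref{b-lemma} is routine. I would present the pentagon/hexagon split as the organizing case distinction and relegate the repeated-facet arguments to a remark that they proceed "as in the proof of Lemma \ref{5-dl-lemma}".
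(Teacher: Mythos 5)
The high-level skeleton of your proposal (identify the edge distribution via Lemma \ref{b-lemma}, establish that $\L_1$ is simple, then split according to whether $F_p,F_q,F_v$ are pentagons or hexagons) tracks the paper's proof, but two of the steps you wave at are exactly where the real work lies, and one of your edge counts is off in a way that undermines the subsequent reasoning.

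First, the edge count. A facet $F_p$ of $\L_1$ with $s$ edges has $3$ edges on $\gamma$ (shared with $F_s,F_i,F_j$), plus $2$ ``side'' edges $F_p\cap F_w$ and $F_p\cap F_t$ that are interior to $|\L_1|$ and lie on neither boundary component, leaving $s-5$ (not $s-3$) edges on the far boundary. So a pentagonal $F_p$ contributes \emph{zero} edges to the outer boundary of $\L_1$, not one; equivalently, $F_p\cap F_t$ and $F_p\cap F_w$ are adjacent, i.e.\ $F_t\cap F_w\ne\varnothing$. Your later reasoning about patterns on the outer boundary is built on the wrong count.

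Second, and more seriously, the dichotomy ``one of $F_p,F_q,F_v$ a hexagon $\Rightarrow$ all three are hexagons'' is asserted but not proved. Your justification (``the outer boundary must be bordered by a $6$-loop with a consistent $(1,3,1,3,1,3)$ or $(2,2,2,2,2,2)$ pattern'') is not a constraint available a priori: there is no reason the outer boundary of $\L_1$ must carry one of these two patterns, and indeed the paper never argues along those lines. The paper's actual mechanism in the pentagon case is substantive: from $F_t\cap F_w\ne\varnothing$ one gets the simple $5$-loop $(F_t,F_q,F_u,F_v,F_w)$, which by the edge-distribution $(3,1)$ on $(F_q,F_u)$ cannot be a $5$-belt (Theorem \ref{5-belt-theorem} forces $(1,1,1,1,1)$ or $(2,2,2,2,2)$ edges on each boundary for a $5$-belt), whence additional non-empty intersections among $F_t,F_u,F_v,F_w$ are forced, and Corollary \ref{4-belt-ful} applied to the resulting $4$-loop pins down $F_u\cap F_w\ne\varnothing$ and the full configuration of Fig.~\ref{131313-lem-fig}b). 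This is the content that produces the ``all pentagons'' alternative and, contrapositively, the hexagon case; your proposal replaces it with a claim about patterns that has no visible proof. (On the smaller point, your simplicity argument for $\L_1$ by analogy with Lemma \ref{5-dl-lemma} is a different route from the paper's, which is more elementary: $F_p\neq F_q$ since they meet $F_j$ along different edges, $F_p\cap F_q=\varnothing$ else $F_j$ would have only $2$ edges on $\gamma$, $F_t\neq F_u$ else $F_q$ is a quadrangle, etc.; either approach could be made to work, but yours is not checked.)
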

\begin{figure}[h]
\begin{center}
\begin{tabular}{ccc}
\includegraphics[height=2.75cm]{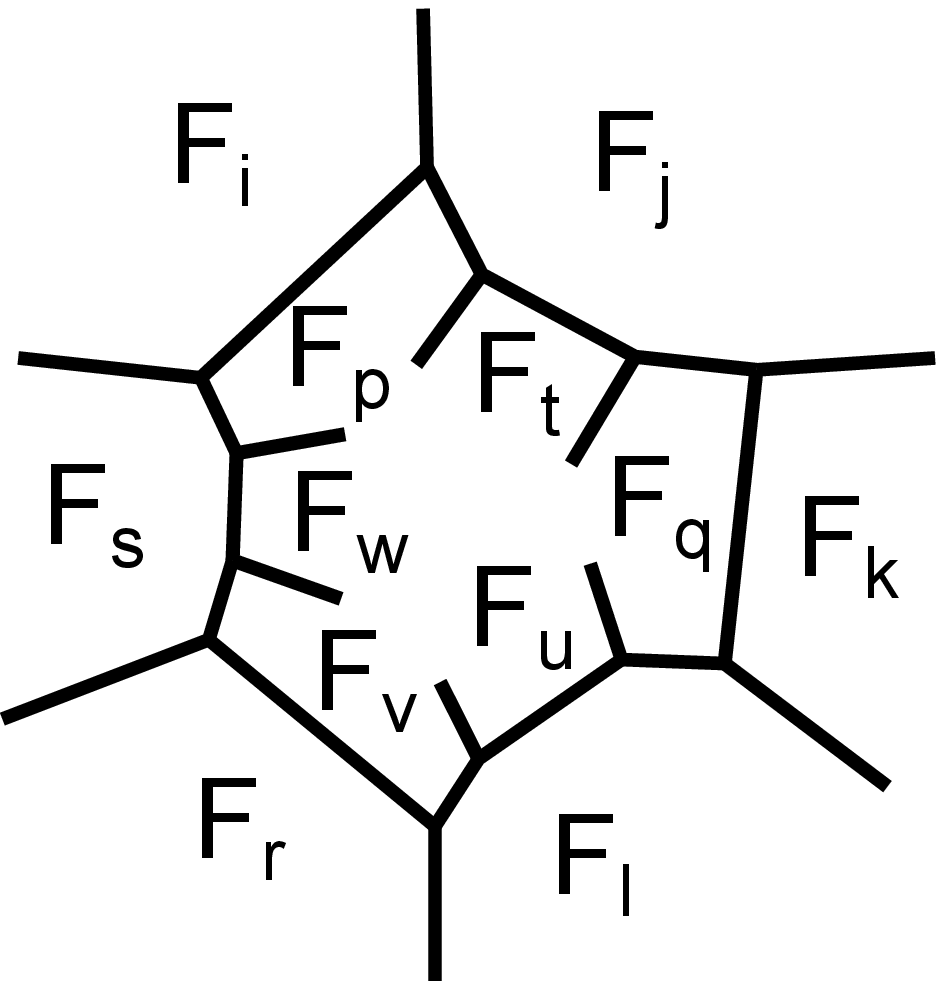}&\includegraphics[height=2.75cm]{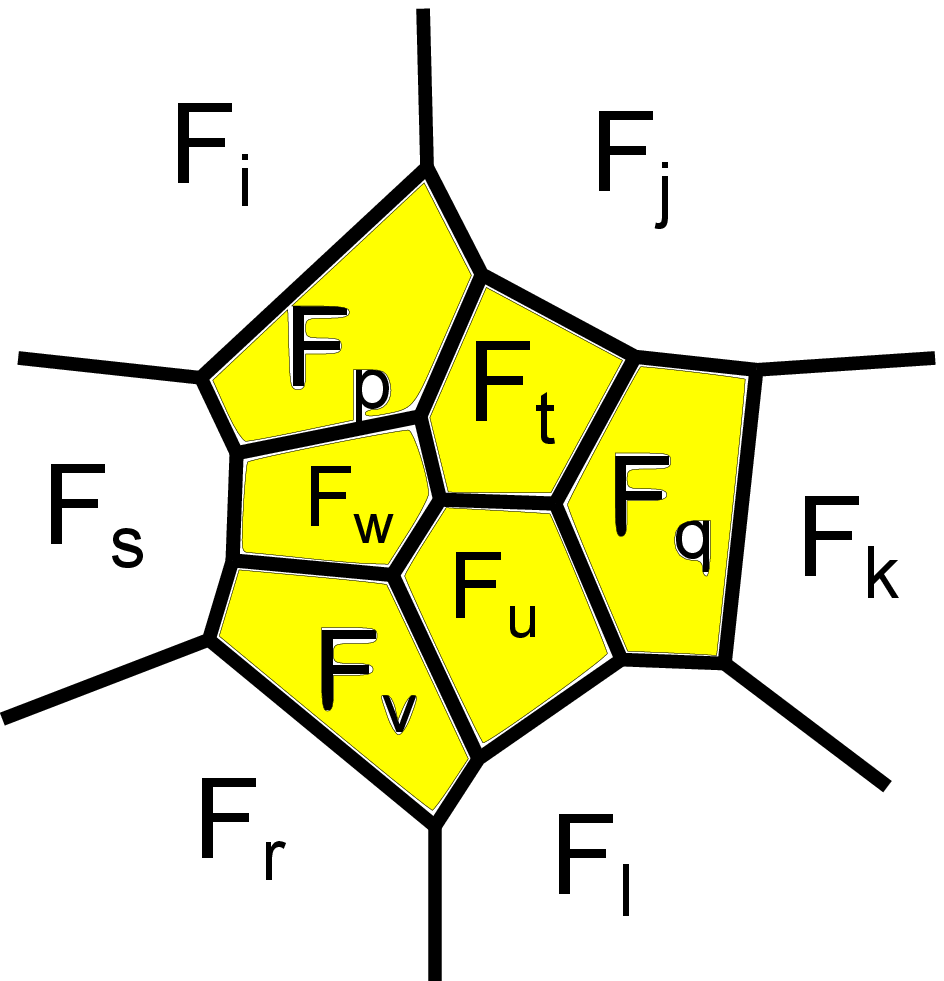}&
\includegraphics[height=2.75cm]{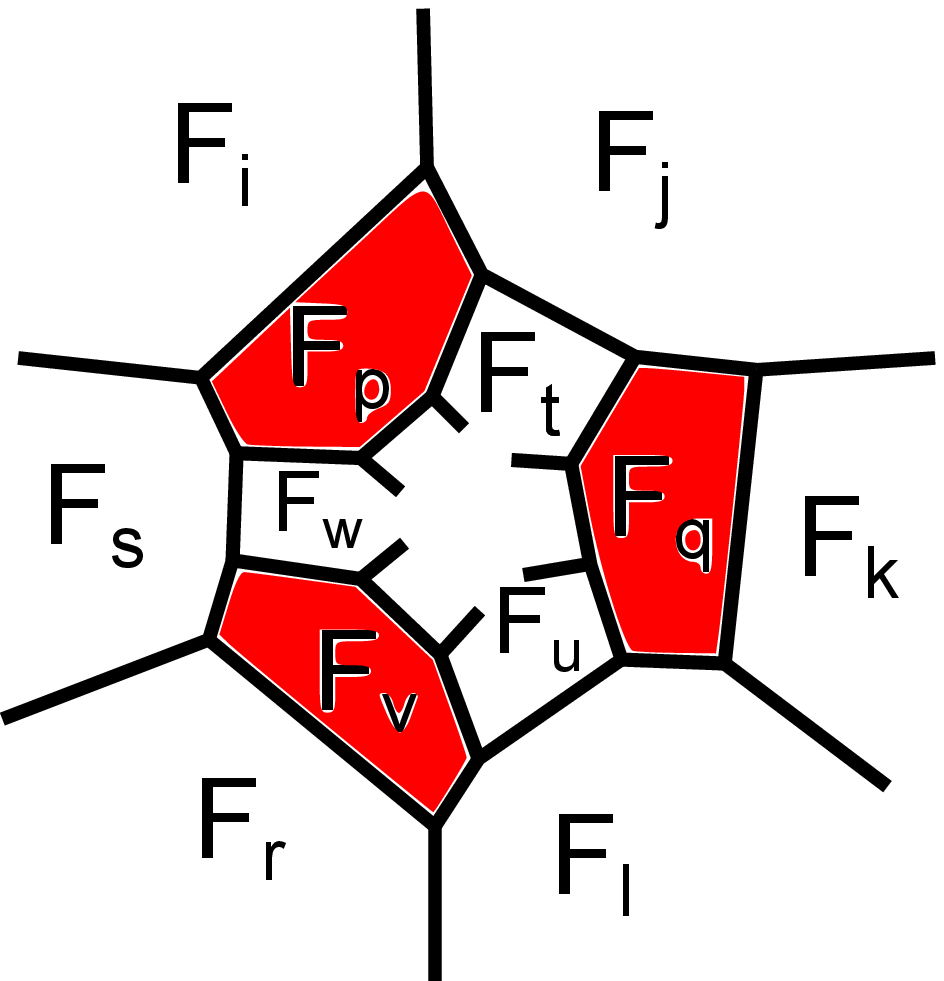}\\
a)&b)&c)
\end{tabular}
\end{center}
\caption{}\label{131313-lem-fig}
\end{figure}
\begin{proof}
Let $\L_1$ and $\L_2$ border the same simple edge-cycle $\gamma$.

We have $F_p\ne F_q$, since they intersect $F_j$ by different edges. Also
$F_p\cap F_q=\varnothing$, else $F_p\cap F_q\cap F_j$ is a vertex and $F_j$
has only $2$ edges in $\gamma$, but not $3$. Similarly $F_p\cap
F_v=\varnothing$. Then $F_p\ne F_u$, else $F_p\cap F_q\ne\varnothing$. We
have $F_t\ne F_u$, else $F_q$ is a quadrangle. By the~symmetry we obtain that
the $6$-loop $\L_6$ is simple and $F_p\cap F_q=F_q\cap F_v=F_v\cap
F_q=\varnothing$.

Let $F_t\cap F_w\ne\varnothing$. Then $F_p\cap F_t\cap F_w$ is a vertex and
$F_p$ is a pentagon. We have a simple $5$-loop $\L_5=(F_t,F_q,F_u,F_v,F_w)$.
It is not a $5$-belt, else by theorem \ref{5-belt-theorem} it should either
surround a pentagon and have $(1,1,1,1,1)$ and $(\b_t,\b_q,\b_u,\b_v,\b_w)$,
$\b_{j}\geqslant 3$, edges on the boundary components, or consist of hexagons
and have $(2,2,2,2,2)$ edges on each boundary component. But $(F_q,F_u)$ have
$(3,1)$ edges on the boundary component.  We have $F_t\cap
F_v\ne\varnothing$, else $F_t\cap F_w\cap F_v$ is a vertex and $F_w$ is a
quadrangle. Similarly $F_q\cap F_w=\varnothing$. Also recall that $F_q\cap
F_w=\varnothing$. Then either $F_t\cap F_u\ne\varnothing$, or $F_u\cap
F_w\ne\varnothing$. In the first case $(F_t, F_u,F_v,F_w)$ is a simple
$4$-loop with $F_t\cap F_v=\varnothing$. Then Corollary~\ref{4-belt-ful}
implies $F_u\cap F_w\ne\varnothing$. Then $F_u\cap F_w\cap F_v$ and $F_u\cap
F_w\cap F_t$ are vertices and we obtain Fig.~\ref{131313-lem-fig}b). In the
second case we obtain the same picture by the symmetry.

If $F_t\cap F_w=\varnothing$, then $F_p$ is a hexagon.

By the symmetry considering the facets $F_q$ and $F_v$ we obtain that either
all of them are pentagons, and we obtain Fig.~~\ref{131313-lem-fig}b), or all
of them are hexagons, and we obtain Fig.~~\ref{131313-lem-fig}c). Since
$F_p\cap F_q=F_q\cap F_v=F_v\cap F_w$, the boundary component of $|\L_3|$
intersecting $F_t$ is a simple edge-cycle. Since both sets $\{F_j,F_l,F_s\}$
and $\{F_q,F_v,F_p\}$ consist of pairwise different facets and these sets
belong to different connected components with respect to $\gamma$, the
$6$-loop $\L_3$ is simple. It has $(1,3,1,3,1,3)$ edges on the boundary
component intersecting $F_t$.
\end{proof}
Let $P$ contain the fragment on Fig.~~\ref{131313-lem-fig}b). Consider the
$6$-loop $\L=(F_p,F_t,F_q,F_u,F_v,F_w)$. The facets $\L\setminus\{F_v\}$ are
pairwise different, since four of them surround the fifth. The same if for
$\L\setminus\{F_p\}$ and $\L\setminus\{F_q\}$. Any two facets of $\L$ belong
to one of these sets, therefore $\L$ is a simple loop. $F_p\cap
F_q=\varnothing$, else $F_p\cap F_q\cap F_t$ is a vertex and $F_t$ is a
quadrangle. Similarly $F_q\cap F_v=F_v\cap F_p=\varnothing$. Then
$\partial|L|$ is a simple edge-cycle, and $\L$ has $(1,3,1,3,1,3)$ edges on
it. Applying Lemma \ref{131313-lemma} to $\L$ we obtain the proof of the
theorem.
\end{proof}
\section{$(s,k)$-truncations and straightenings along edges}\label{Trunc}
A natural generalization of operations drawn on Fig.~\ref{3cut} is an
operation of cutting off a sequence of successive edges of some facet (see
Fig.~\ref{SKcut}). This construction appears in other notations in
\cite{V12}. It is studied in details in \cite{BE15}.
\begin{figure}[h]
\includegraphics[height=3cm]{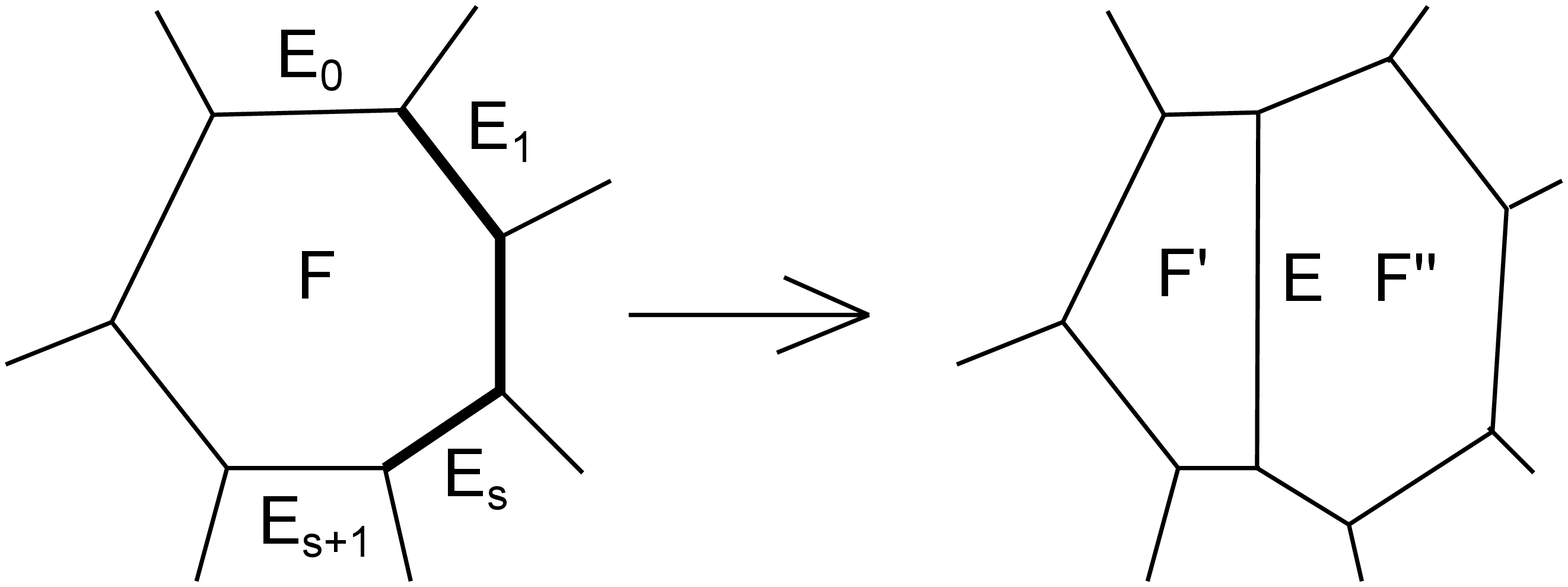}
\caption{$(3,7)$-truncation}\label{SKcut}
\end{figure}

\begin{constr}[$(s,k)$-truncation]
Given $s+2$ successive edges $(E_0,E_1,\dots,E_s,E_{s+1})$ of a $k$-gonal
facet $F$ of a combinatorial simple $3$-polytope $P$, $0\leqslant s\leqslant
k-2$ consider the edges $E_0,E_{s+1}$ of $F$ in some geometric realization of
$P$. Rotate the plane ${\rm aff} F$ around the axis passing through the
midpoints of $E_0$ and $E_{s+1}$ to obtain chosen $(E_1,\dots,E_s)$ lying in
one halfspace, and other vertices of $P$  -- in the other. Then the
intersection of the second halfspace with $P$ gives a resulting polytope $Q$.
This operation is correctly defined on combinatorial types of polytopes. The
graph $G(Q)$ is obtained from $G(P)$ by dividing the edges $E_0$ and
$E_{s+1}$ by midpoints and adding a new edge $E$ connecting the midpoints. We
call this operation an \emph{$(s,k)$-truncation} (see Fig.~\ref{SKcut})

If the facets intersecting $F$ by $E_0$ and $E_{s+1}$ have $t_0$ and
$t_{s+1}$ edges respectively, then we also call the corresponding operation
an \emph{$(s,k;t_0,t_{s+1})$-truncation}.

For $s=1$, let $E_1=F\cap F_{i_1}$ be the edge we cut off and $F_{i_1}$ be a
$t_1$-gon. Then combinatorially $(1,k;t_0,t_2)$-truncation is the same
operation as $(1,t_1;t_0,t_2)$-truncation of the same edge of the facet
$F_{i_1}$. Will call this operation simply a \emph{$(1;t_0,t_2)$-truncation}.

\begin{figure}[h]
\includegraphics[scale=0.3]{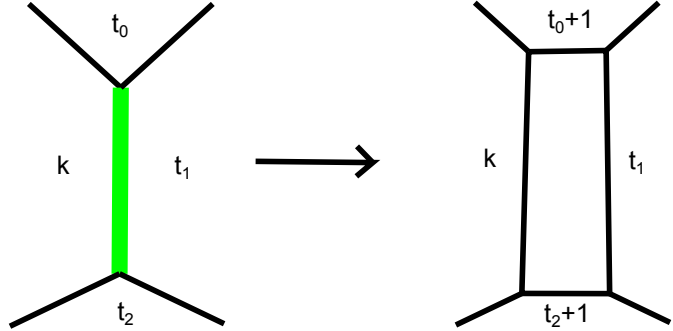}
\caption{$(1;s_1,s_2)$-truncation}\label{1-cut}
\end{figure}

\end{constr}
\begin{exam}
$(0,k)$-truncation is a vertex cut, $(1,k)$-truncations is an edge cut, and
$(2,k)$-truncation is a cutting off of two incident edges (see.
Fig.~\ref{3cut}).
\end{exam}
Let us describe the change of the face lattice under $(s,k)$-truncation.
\begin{utv}
Under $(s,k)$-truncation of a polytope $P$
\begin{enumerate}
\item the facets that do not contain $E_0$ or $E_{s+1}$, do not change
    the combinatorial type;
\item the facet $F$ is divided into two facets: $(s+3)$-gon $F'$ and
    $(k-s+1)$-gon $F''$, $F'\cap F''=E$;
\item   the number of edges of the facets intersecting $F$ by $E_0$ and
    $E_{s+1}$ increases by one.
\end{enumerate}
\end{utv}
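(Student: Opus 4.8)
The plan is to analyze the local change of the face lattice directly from the combinatorial description of the $(s,k)$-truncation given in the Construction, namely that $G(Q)$ is obtained from $G(P)$ by subdividing $E_0$ and $E_{s+1}$ with new vertices and joining these two new vertices by a new edge $E$. First I would fix notation: write $F$ for the $k$-gon being truncated, $(E_0,E_1,\dots,E_s,E_{s+1})$ for the $s+2$ successive edges, $F_0$ and $F_{s+1}$ for the facets meeting $F$ along $E_0$ and $E_{s+1}$ respectively, and $F_1,\dots,F_s$ for the facets meeting $F$ along $E_1,\dots,E_s$ (these may coincide with $F_0$ or $F_{s+1}$ only in degenerate small cases, but the combinatorial statement is uniform). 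Since $Q$ is obtained as the intersection of $P$ with a halfspace whose bounding plane meets $\partial P$ exactly in the new edge $E$ together with the two subdivided portions of $E_0,E_{s+1}$, every facet of $P$ other than $F$, $F_0$, $F_{s+1}$ has its intersection with that halfspace equal to itself, which gives (1) immediately.

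For (2), I would argue that the supporting plane of the new facet $E'$ (the "cut" facet) meets exactly the facets $F$, $F_0$, $F_1,\dots,F_s$, $F_{s+1}$; thus the hyperplane arrangement forces $F$ to be split by the chord $E$ connecting the new midpoint-vertices of $E_0$ and $E_{s+1}$. Counting vertices on each side: the chord $E$ cuts off from $F$ the part bounded by $E$, the halves of $E_0$ and $E_{s+1}$ remaining on that side, and the edges $E_1,\dots,E_s$ — that is $s+1$ original edges plus $E$, i.e. an $(s+3)$-gon $F''$ if we count the two new half-edges separately, or rather: $F''$ has the edges $E,\;(\text{half of }E_0),E_1,\dots,E_s,(\text{half of }E_{s+1})$, which is $s+3$ edges. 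Wait — more carefully, $F'$ is bounded by $E$ together with the remaining $k-(s+1)$ full edges of $F$ not among $E_1,\dots,E_s$ and the two half-edges, giving $1+(k-s-1-2)+2 = k-s$... I would need to recount against the claimed values $(s+3)$-gon and $(k-s+1)$-gon, using that $F$ has $k$ edges and $E_0,E_{s+1}$ are each split into two: $F$'s edge set $\{E_0,\dots,E_{s+1},E_{s+2},\dots,E_{k-1}\}$ (indices mod $k$, so $E_{s+1}$ and $E_0$ are adjacent through $E_{k-1},\dots$). The chord $E$ lies between the new vertex on $E_0$ and the new vertex on $E_{s+1}$. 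One side keeps $E_1,\dots,E_s$ (that is $s$ edges) plus the two new half-edges plus $E$: total $s+3$. The other side keeps $E_{s+2},\dots,E_{k-1}$ (that is $k-s-2$ edges) plus the two new half-edges plus $E$: total $k-s+1$. This matches (2), and $F'\cap F'' = E$ by construction.

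For (3), observe that $F_0$ previously met $F$ along the full edge $E_0$; after truncation $F_0$ meets $Q$'s boundary in its old boundary with one extra vertex inserted on $E_0$ (the midpoint) and one extra edge, namely the edge of $E'$ incident to that vertex — equivalently, $F_0$ now has the edge it shares with $E'$ in addition to the two halves of $E_0$ in place of $E_0$, a net increase of one edge. The same holds for $F_{s+1}$. I would make this precise by noting that the new vertex on $E_0$ has degree three in $G(Q)$ (it lies on the two halves of $E_0$ and on $E$), hence in the facet $F_0$ it is a genuine vertex, splitting the angle at $E_0$ and adding exactly one edge to $F_0$'s boundary.

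The step I expect to be the main obstacle is making the claim "the supporting plane of $E'$ meets exactly $F, F_0, F_1,\dots,F_{s+1}$ and no other facet" fully rigorous, since a priori the rotated plane could clip other facets; but this is exactly the content built into the Construction (the rotation is chosen so that only $E_1,\dots,E_s$ pass to the far side), so I would simply invoke the Construction's well-definedness and the explicit description of $G(Q)$ rather than re-derive the geometry. Given that description, all three assertions are direct bookkeeping of vertices and edges of the affected facets, so the proof is short.
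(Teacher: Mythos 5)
The paper states this proposition without proof, treating it as a direct reading-off from the combinatorial description of $G(Q)$ in the Construction (subdivide $E_0$ and $E_{s+1}$ by midpoints, add the chord $E$). Your bookkeeping for items (2) and (3), once you straighten out the edge count, is correct, and your final remark that everything should be read from the $G(Q)$ description is in the right spirit.

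However, your argument for item (1) rests on a geometric claim that is false for $s \geqslant 1$ and that you yourself contradict in your closing paragraph. You assert that the bounding plane of the cutting halfspace meets $\partial P$ exactly in $E$ together with the two halves of $E_0$ and $E_{s+1}$, and deduce that every facet other than $F$, $F_0$, $F_{s+1}$ has its intersection with the halfspace equal to itself. In fact the cutting plane intersects $\partial P$ in the boundary cycle of the new $(s+3)$-gonal facet, which is the chord $E$ together with one segment inside each of $F_0,F_1,\dots,F_s,F_{s+1}$ — and you correctly say so later when you write that the supporting plane meets exactly $F,F_0,F_1,\dots,F_{s+1}$. So the facets $F_1,\dots,F_s$ are precisely the ones that do have a corner sliced off, and they require an argument: each $F_j$ loses the edge $E_j$ together with its two endpoints $v_j=E_{j-1}\cap E_j$ and $v_{j+1}=E_j\cap E_{j+1}$, and gains one new edge on the cutting plane with its two endpoints, so the number of sides of $F_j$ is unchanged. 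Equivalently, in the purely combinatorial model where $F$ is literally divided by $E$, the facets $F_1,\dots,F_s$ are verbatim the same and simply have $F'$ in place of $F$ as a neighbor. As written, your proof of (1) covers only the facets disjoint from the cut-off wedge and silently omits exactly those $F_j$ for which the claimed invariance is nontrivial.
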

It is easy to see that combinatorially the $(k-s-2,k)$-truncation along edges
in $F$ complementary to $(E_0,E_1,\dots,E_s,E_{s+1})$ is the same operation
as $(s,k)$-truncation.

The next result is contained in \cite{V12}. See the proof in \cite{BE15}.
\begin{lemma}
Let $P$ be a flag $3$-polytope. Then the polytope obtained by an
$(s,k)$-truncation is flag if and only if  $0<s<k-2$.
\end{lemma}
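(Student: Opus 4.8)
The plan is to prove the two implications separately. The ``only if'' direction is immediate: if $s=0$ the new facet $F'$ is an $(s+3)$-gon, i.e.\ a triangle, while if $s=k-2$ the new facet $F''$ is a $(k-s+1)$-gon, i.e.\ a triangle; in either case $p_3(Q)\neq 0$, so $Q$ is not flag by Corollary~\ref{flag-3}. Hence everything is in the ``if'' direction.

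Assume $0<s<k-2$ and keep the notation of the Construction: $F$ is split into the $(s+3)$-gon $F'$, carrying the cut edges $E_1,\dots,E_s$, and the $(k-s+1)$-gon $F''$, carrying the other edges of $F$; $F'\cap F''=E$ is the new edge; the facets $F_0$ and $F_{s+1}$ meeting $F$ along $E_0$ and $E_{s+1}$ each gain one edge; all other facets are unchanged. Since $1\leqslant s\leqslant k-3$, both $F'$ and $F''$ have at least four edges, and $P$ flag gives $p_3(P)=0$, so $p_3(Q)=0$ and in particular $Q\neq\Delta^3$. The key preliminary observation is that the truncation neither creates nor destroys any incidence between facets other than $F'$ and $F''$: each edge of $P$ off $\partial F$ is untouched, and each edge of $\partial F$ joins $F$ to exactly one other facet; thus, for $G,G'\notin\{F',F''\}$, the face $G\cap G'$ is literally the same in $P$ and in $Q$.

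Next I would assume for contradiction that $Q$ has a $3$-belt $\mathcal B=(G_1,G_2,G_3)$ and go through the cases of $\mathcal B\cap\{F',F''\}$, using that the $(s,k)$- and $(k{-}s{-}2,k)$-truncations coincide with $F'$ and $F''$ interchanged, so that we may assume $F''\notin\mathcal B$ whenever $F'\in\mathcal B$. If $F',F''\notin\mathcal B$, then by the observation $\mathcal B$ is a $3$-loop of $P$ with the same pairwise intersections, and if $G_1\cap G_2\cap G_3$ were nonempty it would be a vertex of $P$ on three facets none equal to $F$, hence a vertex off $\partial F$, hence still present in $Q$, contradicting that $\mathcal B$ is a $3$-belt; so $\mathcal B$ is a $3$-belt of $P$, against flagness. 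If $F'\in\mathcal B$, write $\mathcal B=(F',G_2,G_3)$; the facets adjacent to $F'$ in $Q$ are $F_0$, $F_{s+1}$, the facets $F_{i_1},\dots,F_{i_s}$ of $P$ along $E_1,\dots,E_s$, and the excluded $F''$, so $G_2,G_3$ come from the first three items. If $\{G_2,G_3\}=\{F_{i_a},F_{i_b}\}$, then $F_{i_a}\cap F_{i_b}$ is an edge in $Q$, hence in $P$, so $(F,F_{i_a},F_{i_b})$ is a $3$-loop of $P$; by flagness it has a common vertex $v\in E_a\cap E_b$, so $E_a$ and $E_b$ are adjacent edges of $F$ and $v$ is a vertex of $F'$ lying on $F_{i_a}$ and $F_{i_b}$ in $Q$ --- so $\mathcal B$ is not a $3$-belt. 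If $G_2=F_0$ (the case $G_2=F_{s+1}$ being symmetric) and $G_3=F_{i_c}$, then either $F_0\cap F_{i_c}$ is not an edge and $\mathcal B$ is not even a $3$-loop, or $(F,F_0,F_{i_c})$ is a $3$-loop of $P$ with a common vertex $v\in E_0\cap E_c$, forcing $c=1$, and then $v=E_0\cap E_1$ is a vertex of $F'$ on $F_0$ and $F_{i_1}$ in $Q$; if instead $G_3=F_{s+1}$, then $E_0\cap E_{s+1}=\varnothing$, so $(F,F_0,F_{s+1})$ would be a $3$-belt of $P$ if $F_0\cap F_{s+1}$ were an edge, and otherwise $\mathcal B$ is not a $3$-loop --- all alternatives impossible. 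Finally, if $\{F',F''\}\subset\mathcal B$, the third facet is adjacent to both $F'$ and $F''$, hence equals $F_0$ or $F_{s+1}$, and then the midpoint of $E_0$ (resp.\ $E_{s+1}$), which is an endpoint of $E=F'\cap F''$, lies on all three facets of $\mathcal B$, so $\mathcal B$ is not a $3$-belt. In every case we reach a contradiction, so $Q$ has no $3$-belt; being different from $\Delta^3$, it is flag.

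The hard part will be the middle case $F'\in\mathcal B$: for each admissible pair $(G_2,G_3)$ one must pin down which vertex of $P$ reappears in $Q$ as a common vertex of $\mathcal B$, or else see that $\mathcal B$ is not a $3$-loop, or that it would force a forbidden $3$-belt in $P$. This uses repeatedly the remark following Theorem~\ref{3-belt-theorem} that in a polytope without $3$-belts any three pairwise intersecting facets have a common vertex; the remaining cases are routine.
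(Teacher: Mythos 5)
The paper itself does not prove this lemma; it only states that the result ``is contained in \cite{V12}'' and refers to \cite{BE15} for a proof, so there is no in-paper argument to compare your proposal against. Evaluated on its own terms, your proof is correct. The ``only if'' direction is immediate as you say, since one of $F'$, $F''$ is a triangle and flag polytopes have $p_3=0$. For the ``if'' direction, your preliminary observation --- that for $G,G'\notin\{F',F''\}$ (equivalently $\ne F$ in $P$) the face $G\cap G'$ is unchanged, because the only edges of $P$ altered by the truncation ($E_0$ and $E_{s+1}$) lie on $F$ --- is the right reduction, and the subsequent case analysis on $\mathcal B\cap\{F',F''\}$ is complete: when neither new facet is in $\mathcal B$ you pull the putative $3$-belt back to $P$; when exactly one is, you use the remark after Theorem~\ref{3-belt-theorem} to locate a common vertex of the corresponding $3$-loop $(F,\cdot,\cdot)$ in $P$ and check that it survives as a vertex of $F'$ in $Q$; when both are in $\mathcal B$, one of the midpoints furnishes a common vertex. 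Two small places deserve a line of justification when writing this up: in the case $\{F',F''\}\subset\mathcal B$, the claim that the third facet must be $F_0$ or $F_{s+1}$ should be argued (e.g., $F_{i_j}$ for $1\leqslant j\leqslant s$ shares no vertex with $F''$, so is disjoint from it, and symmetrically), and the passage ``forcing $c=1$'' really hides the alternative that $c\geqslant 2$ would make $(F,F_0,F_{i_c})$ a $3$-belt of $P$, again contradicting flagness. Neither is a gap in substance.
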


Now we define an operation that is combinatorially inverse to the
$(s,k)$-truncation. This operation corresponds to an edge contraction of
simplicial polytopes from \cite{V12,V15}.
\begin{figure}[h]
\includegraphics[height=4cm]{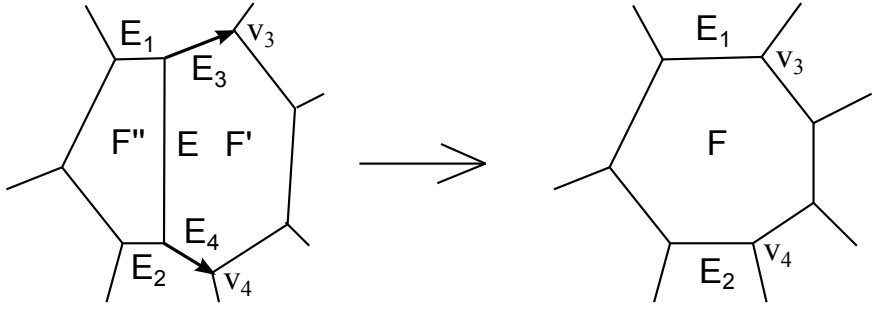}
\caption{Straightening along the edge}\label{Unfold}
\end{figure}
\begin{constr}[Straightening along the edge]
Let $P$ be a combinatorial simple polytope, and let $E=F'\cap F''$ be an
edge, where $F'$ is an $(s+3)$-gon and $F''$ is an $(k-s+1)$-gon, $k\geqslant
3$, $0\leqslant s\leqslant k-2$. If a geometrical realization of $P$ is
obtained from a geometrical realization of a combinatorial polytope $Q$ by an
$(s,k)$-truncation such that the facet $F\subset Q$ is divided into $F'$ and
$F''$, then we say that $Q$ is obtained from $P$ by \emph{straightening along
the edge $E$}. If the operation is defined, then the combinatorial type of
$Q$ is defined uniquely.  The graph is changed in the following way. Let in
$G(P)$ the edge $E$ is incident to $E_1,E_2\subset F''$ and $E_3,E_4\subset
F'$. Then $G(Q)$ is obtained from $G(P)$ by deletion of $E$ and contraction
of $E_3$ and $E_4$ (see Fig.~\ref{Unfold}).
\end{constr}
Let us describe the change of facet lattice under the straightening along the
edge.
\begin{utv} Under the straightening along the edge of $P$
\begin{enumerate}
\item the combinatorics of facets that do not intersect $E$ does not
    change;
\item  The facets  $F'$  $F''$, containing $E$, are united into the facet
    $F$.
\item the number of edges of facets intersecting $E$ by a vertex
    decreases by one.
\end{enumerate}
\end{utv}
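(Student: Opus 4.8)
The plan is to deduce the three items directly from the fact that the straightening along an edge is, by its very definition, combinatorially inverse to an $(s,k)$-truncation, and then to invoke the Proposition describing the change of the face lattice under an $(s,k)$-truncation. Concretely, by the definition of the straightening along the edge $E$, the polytope $P$ is obtained from $Q$ by an $(s,k)$-truncation along successive edges $(E_0,E_1,\dots,E_s,E_{s+1})$ of a facet $F\subset Q$, in such a way that $F$ is divided into the $(s+3)$-gon $F'$ and the $(k-s+1)$-gon $F''$, with $E=F'\cap F''$ the new edge joining the midpoints $v_1$ of $E_0$ and $v_2$ of $E_{s+1}$. It therefore suffices to read each conclusion of the truncation Proposition, applied to the pair $(Q,P)$, backwards.

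First I would fix the bijection between facets: the facets of $P$ are $F'$, $F''$, together with the facets of $Q$ distinct from $F$, which we keep named. Under this correspondence a facet $G\neq F$ of $Q$ contains neither $E_0$ nor $E_{s+1}$ if and only if it contains neither $v_1$ nor $v_2$, i.e. if and only if the corresponding facet of $P$ does not meet $E$ (whose endpoints are precisely $v_1$ and $v_2$). By item (1) of the truncation Proposition such a $G$ does not change its combinatorial type, which gives item (1). Item (2) is item (2) of the truncation Proposition read in reverse: $F'$ and $F''$ arise by subdividing $F$, hence under the straightening they are reunited into $F$. For item (3), I first observe that the facets of $P$ meeting $E$ are exactly $F'$, $F''$ (which contain $E$) and the two facets $G_0$, $G_{s+1}$ of $Q$ meeting $F$ along $E_0$ and along $E_{s+1}$: indeed, for a facet $G$ of the simple $3$-polytope $P$ the intersection $G\cap E$ is a face of $P$, so it is $\varnothing$, a vertex, or $E$; and at each endpoint of $E$ the three facets through that vertex are $F'$, $F''$, and one of $G_0$, $G_{s+1}$. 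By item (3) of the truncation Proposition the number of edges of $G_0$ and $G_{s+1}$ increases by one under $Q\to P$, so it decreases by one under the straightening $P\to Q$, giving item (3).

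The only point that needs care is this facet bookkeeping in items (1) and (3): identifying "facets of $P$ not meeting $E$" with "facets of $Q$, other than $F$, avoiding $E_0$ and $E_{s+1}$", and checking that $F'$, $F''$, $G_0$, $G_{s+1}$ are the only facets of $P$ that meet $E$. Both are straightforward once one uses that $E$ is an edge and that every vertex of a simple $3$-polytope lies on exactly three facets. As an alternative (or a cross-check) one can verify all three items directly from the explicit graph modification recorded in the Construction, namely deleting the edge $E$ and contracting $E_3$ and $E_4$, but the inverse-operation argument above is the shortest route and I expect no genuine obstacle beyond this bookkeeping.
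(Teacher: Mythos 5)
Your proposal is correct. The paper states this Proposition without proof, treating it as an immediate consequence of the definitions; your argument makes explicit precisely the intended route, namely reading the truncation Proposition for the pair $(Q,P)$ backwards and doing the facet bookkeeping, and I see no gap in it.
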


Let $P$ be a simple polytope. Let the edge $E=F_i\cap F_j$ connects the
vertices $F_i\cap F_j\cap F_p$ and $F_i\cap F_j\cap F_q$, and the facet $F_i$
is incident to $F_{i_1},\dots, F_{i_s}$, and $F_j$ -- to $F_{j_1},\dots,
F_{j_t}$ (Fig.~\ref{UF1}). The following result is proved in \cite{BE15} on
the base of the Steinitz Theorem for $3$-polytopes.
\begin{lemma}[\cite{BE15}]\label{ras}
Let $P\not\simeq\Delta^3$ be a simple polytope. The operation of
straightening along $E=F_i\cap F_j$ is defined if and only if
$\{F_{i_1},\dots, F_{i_s}\}\cap \{F_{j_1},\dots,F_{j_t}\}=\varnothing$.
\end{lemma}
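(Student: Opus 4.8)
The plan is to use the Steinitz theorem to reduce the statement to a purely graph-theoretic condition. Recall that straightening along $E=F_i\cap F_j$ is defined precisely when the graph $G(Q)$ obtained from $G(P)$ by deleting $E$ and contracting the two resulting edges $E_3,E_4$ (in the notation of the Construction) is the graph of a simple $3$-polytope, i.e.\ is simple, planar, and $3$-connected. Planarity is automatic, since contracting edges of a planar graph keeps it planar. The non-trivial part is: the contraction produces no loops or multiple edges (simplicity), and it stays $3$-connected; and I claim both of these are equivalent to $\{F_{i_1},\dots,F_{i_s}\}\cap\{F_{j_1},\dots,F_{j_t}\}=\varnothing$.

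First I would translate the intersection condition. A facet $F$ meeting both $F_i$ and $F_j$ but distinct from them either meets the edge $E$ at one of its endpoints (one of the two vertices $F_i\cap F_j\cap F_p$, $F_i\cap F_j\cap F_q$), in which case $F\in\{F_p,F_q\}$ and this is harmless, or it meets $F_i$ and $F_j$ along edges not through $E$. So the real obstruction is a facet $F\in\{F_{i_1},\dots,F_{i_s}\}\cap\{F_{j_1},\dots,F_{j_t}\}$ with $F\ne F_p,F_q$. In the dual picture (as in the Remark after the $k$-belt definition) such an $F$ together with $F_i,F_j$ forms a triangle in $G(P^*)$; equivalently $(F,F_i,F_j)$ is a set of three pairwise intersecting facets whose common intersection is empty, i.e.\ a $3$-belt neighbourhood of the edge $E$. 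I would phrase everything on the side of $P$ to avoid appealing to duality.

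Next, the ``only if'' direction. Suppose such an $F$ exists. When we contract $E_3$ and $E_4$ after deleting $E$, the facet $F$ (a cycle in the planar graph) passes through both endpoints of $E$; I would check that the contraction then identifies two vertices of that cycle that were not adjacent, creating either a multiple edge or a separating pair in $G(Q)$ — concretely, the two vertices of $F$ adjacent to the contracted ends become a $2$-cut, since removing them disconnects the part of $G(Q)$ ``inside'' $F$ from the part ``outside.'' Hence $G(Q)$ is not the graph of a $3$-polytope and straightening is undefined. For the ``if'' direction, assume the intersection is empty. I would verify directly that no loop or multiple edge is created (a multiple edge would force a common facet of $F_i$ and $F_j$ other than through $E$, i.e.\ exactly the forbidden $F$; a loop cannot appear because $P\not\simeq\Delta^3$ so $F'$ and $F''$ are genuinely distinct facets sharing only $E$), and then that $G(Q)$ remains $3$-connected: any $2$-element vertex cut of $G(Q)$ would pull back, via the inverse operation (subdividing the contracted vertex and re-inserting $E$), to a $2$-cut of $G(P)$ unless the cut vertices are exactly the two endpoints-images, and that case is ruled out again by emptiness of the intersection. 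Then $G(Q)$ is simple, planar, $3$-connected, so by Steinitz it is $G(Q)$ for a simple $3$-polytope, and the $(s,k)$-truncation of that polytope along the edge $E$ returns $P$, so straightening is defined.

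The main obstacle I anticipate is the careful bookkeeping in the $3$-connectivity argument: one has to rule out \emph{all} potential $2$-cuts of $G(Q)$, and the cleanest way is probably to argue that a $2$-cut of $G(Q)$ not involving the image of the contracted vertex lifts verbatim to a $2$-cut of $G(P)$ (contradiction), while one involving it corresponds, after reinserting the subdivided edge, to a $3$-cut $\{u,v,w\}$ of $G(P)$ in which $w$ is an endpoint of $E$; separating faces through such a triple forces the two ``sides'' to share a facet adjacent to both $F_i$ and $F_j$, i.e.\ the forbidden $F$. Making this ``forces a common facet'' step precise — essentially a small planarity/Jordan-curve argument on $\partial P$ of the kind used in Theorem \ref{Jordan-Th} and Lemma \ref{belt-lemma} — is where the real work lies; everything else is routine, and in any case the result is quoted from \cite{BE15}, so a reference-level proof suffices here.
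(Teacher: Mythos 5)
The paper does not actually prove this lemma: it is quoted from \cite{BE15} with only the remark that the proof there rests on the Steinitz theorem, which is precisely the deletion--contraction route you take, so your outline matches the intended argument and is sound. Two small corrections. In the ``only if'' direction the cleanest formulation is that a common neighbour $F\notin\{F_p,F_q\}$ would share \emph{two disjoint edges} with the merged facet in $G(Q)$; the resulting $2$-cut consists of one endpoint of each of those two shared edges, chosen on opposite boundary circles of the annulus $\overline{F}\cup\overline{F_{ij}}$ --- not ``the two vertices of $F$ adjacent to the contracted ends'' as you wrote. In the ``if'' direction the hypothesis $P\not\simeq\Delta^3$ is needed to exclude a \emph{multiple} edge, not a loop (loops never arise, since contracting an edge of a simple graph creates a loop only from a parallel edge): for $\Delta^3$ the intersection condition holds vacuously for every edge, yet suppressing the two degree-$2$ vertices doubles the edge $xy$ opposite to $E$; so your claim that ``a multiple edge forces the forbidden $F$'' is true only after this degenerate case is ruled out by $P\not\simeq\Delta^3$.
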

\begin{cor}\label{cor-3-s}
For $P\simeq\Delta^3$ no straightening operations are defined. Let
$P\not\simeq \Delta^3$ be a simple polytope. The operation of straightening
along $E=F_i\cap F_j$ is defined if and only there is a $3$-belt
$(F_i,F_j,F_k)$ for some $F_k$.
\end{cor}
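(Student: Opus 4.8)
The plan is to reduce the statement to Lemma~\ref{ras}, handling the tetrahedron on its own. For $P\simeq\Delta^3$ I would argue by counting facets: by the description of how a straightening changes the facet lattice, if the straightening along an edge $E=F'\cap F''$ were defined then $F'$ and $F''$ would be merged into a single facet while every other facet survives, so the simple $3$-polytope $Q$ such that $P$ is obtained from $Q$ by an $(s,k)$-truncation would have $f_2(Q)=f_2(\Delta^3)-1=3$ facets, which is impossible. (Alternatively, each facet of $\Delta^3$ is a triangle, so $F'$ being an $(s+3)$-gon forces $s=0$ and $F''$ being a $(k-s+1)$-gon forces $k=2$, contradicting the requirement $k\geqslant 3$.)

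Now let $P\not\simeq\Delta^3$. I would first make the notation of Lemma~\ref{ras} explicit: $E=F_i\cap F_j$ joins the vertices $F_i\cap F_j\cap F_p$ and $F_i\cap F_j\cap F_q$, while $F_{i_1},\dots,F_{i_s}$ (respectively $F_{j_1},\dots,F_{j_t}$) are the facets adjacent to $F_i$ (respectively $F_j$) other than $F_j,F_p,F_q$ (respectively $F_i,F_p,F_q$). Hence $\{F_{i_1},\dots,F_{i_s}\}\cap\{F_{j_1},\dots,F_{j_t}\}\neq\varnothing$ precisely when there is a facet $F_k\notin\{F_i,F_j,F_p,F_q\}$ adjacent to both $F_i$ and $F_j$.

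The core step is then to match the existence of such an $F_k$ with the existence of a $3$-belt $(F_i,F_j,F_k)$. If $F_k\notin\{F_i,F_j,F_p,F_q\}$ is adjacent to both $F_i$ and $F_j$, then $F_i\cap F_j$, $F_j\cap F_k$ and $F_k\cap F_i$ are edges, so $(F_i,F_j,F_k)$ is a $3$-loop, and $F_i\cap F_j\cap F_k\subseteq F_i\cap F_j=E$, so this intersection is empty or one of the two endpoints of $E$; since $P$ is simple, the only facets through the endpoint $F_i\cap F_j\cap F_p$ (respectively $F_i\cap F_j\cap F_q$) are $F_i,F_j,F_p$ (respectively $F_i,F_j,F_q$), so $F_k\neq F_p,F_q$ yields $F_i\cap F_j\cap F_k=\varnothing$, i.e.\ $(F_i,F_j,F_k)$ is a $3$-belt. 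Conversely, a $3$-belt $(F_i,F_j,F_k)$ has $F_k$ adjacent to both $F_i$ and $F_j$, and $F_i\cap F_j\cap F_k=\varnothing$ prevents $F_k$ from containing an endpoint of $E$, hence $F_k\neq F_p,F_q$. Substituting this equivalence into Lemma~\ref{ras} shows that, for $P\not\simeq\Delta^3$, the straightening along $E$ is defined if and only if there is no $3$-belt $(F_i,F_j,F_k)$; equivalently, a $3$-belt $(F_i,F_j,F_k)$ exists for some $F_k$ exactly when the straightening along $E$ is not defined.

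The argument is mostly bookkeeping. The one place requiring a little care is unwinding the notation of Lemma~\ref{ras}: one must see that, among the common neighbours of $F_i$ and $F_j$, it is exactly $F_p$ and $F_q$ whose presence does not force a $3$-belt, every other common neighbour $F_k$ producing a $3$-belt $(F_i,F_j,F_k)$; this uses that a vertex of a simple $3$-polytope lies on exactly three facets. It is also worth emphasising that $\Delta^3$ is a genuine exception --- it has no $3$-belts yet admits no straightening whatsoever --- which is why it is treated separately in the corollary.
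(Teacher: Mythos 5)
Your proof is correct and follows essentially the same route as the paper: both handle $\Delta^3$ by noting a straightening must decrease the facet count below $4$, and both invoke Lemma~\ref{ras} and translate the condition $\{F_{i_1},\dots,F_{i_s}\}\cap\{F_{j_1},\dots,F_{j_t}\}\neq\varnothing$ into the existence of a $3$-belt through $F_i$ and $F_j$. The paper asserts that last equivalence in a single sentence without justification; your unwinding (in particular that a common neighbour $F_k\neq F_p,F_q$ cannot contain an endpoint of $E$, since a vertex of a simple $3$-polytope lies on exactly three facets, so $F_i\cap F_j\cap F_k=\varnothing$) is precisely the missing detail, and is done correctly. Note also that the conclusion you reach --- straightening along $E$ is defined if and only if there is \emph{no} $3$-belt $(F_i,F_j,F_k)$ --- is the right one, consistent with the paper's own proof text and with Corollary~\ref{cor-flag}; the printed statement of the corollary is evidently missing the word ``no''.
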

\begin{proof}
The straightening operation decreases the number of facets, therefore it is
not defined for a simplex. By Lemma \ref{ras} the straightening along $E$ is
not defined if and only $F_{i_a}=F_{j_b}$ for some $a,b$. This is equivalent
to the fact that $(F_i,F_j,F_{i_a})$ is a $3$-belt.
\end{proof}
\begin{figure}[h]
\includegraphics[height=4cm]{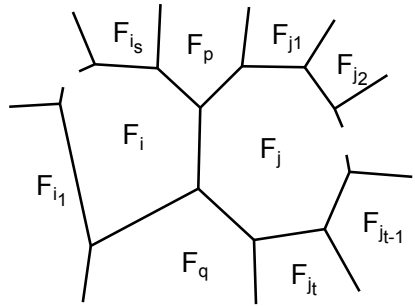}
\caption{A polytope in the neighborhood of $F_i\cap F_j$}\label{UF1}
\end{figure}
\begin{cor}\label{cor-flag}
A simple $3$-polytope is flag if and only of it can be straightened along any
edge.
\end{cor}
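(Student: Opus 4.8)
The plan is to derive Corollary~\ref{cor-flag} directly from Corollary~\ref{cor-3-s}, which already pins down exactly when a straightening is admissible: for $P\not\simeq\Delta^3$, the straightening along an edge $E=F_i\cap F_j$ is defined if and only if no $3$-belt passes through $F_i$ and $F_j$ (equivalently, by Lemma~\ref{ras}, the facets incident to $F_i$ are disjoint from the facets incident to $F_j$ apart from the two forced common neighbours). Granting this, both implications of the corollary are short logical deductions, once the definition of a flag polytope is unwound. The only point requiring a little care is that the simplex $\Delta^3$ is excluded from being flag by convention even though it carries no $3$-belt, so it has to be dealt with separately.

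For the forward implication I would start from a flag polytope $P$. By definition $P\not\simeq\Delta^3$ and $P$ contains no $3$-belt at all; in particular, for every edge $E=F_i\cap F_j$ there is no $3$-belt of the form $(F_i,F_j,F_k)$, so Corollary~\ref{cor-3-s} guarantees that the straightening along $E$ is defined. Since $E$ was arbitrary, $P$ can be straightened along every edge.

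For the converse, assume $P$ can be straightened along every edge. First $P\not\simeq\Delta^3$: the simplex has edges, but by the first assertion of Corollary~\ref{cor-3-s} no straightening of it is defined, so it fails the hypothesis. Next, suppose for contradiction that $P$ carries a $3$-belt $(G_1,G_2,G_3)$. By the definition of a $3$-loop, $E:=G_1\cap G_2$ is an edge of $P$, and $(G_1,G_2,G_3)$ is a $3$-belt through $G_1$ and $G_2$; Corollary~\ref{cor-3-s} then forces the straightening along $E$ to be undefined, contradicting the hypothesis. Hence $P$ has no $3$-belt, and together with $P\not\simeq\Delta^3$ this says $P$ is flag.

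There is no genuine obstacle here: the argument is a bookkeeping consequence of Corollary~\ref{cor-3-s}. The two things to keep straight are the direction of the equivalence in that corollary --- a straightening exists precisely in the \emph{absence} of a $3$-belt through the two facets --- and the explicit handling of $\Delta^3$, which is ruled out of flagness by fiat rather than by the $3$-belt criterion.
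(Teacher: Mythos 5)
Your proof is correct and is the immediate deduction the paper intends (the paper states the corollary without a written proof). One worthwhile observation: the published statement of Corollary~\ref{cor-3-s} actually says the straightening is defined ``if and only if there \emph{is} a $3$-belt $(F_i,F_j,F_k)$,'' which is the reverse of what its own proof and Lemma~\ref{ras} establish; you have silently read it with the missing negation inserted (``\dots if and only if there is \emph{no} such $3$-belt''), which is the only reading under which the downstream corollaries, including \ref{cor-flag}, make sense. With that reading, your two implications are exactly right, and your explicit treatment of $\Delta^3$ --- ruled out of flagness by definition rather than by a $3$-belt, and ruled out of straightenability by the first sentence of Corollary~\ref{cor-3-s} --- is the one subtlety that deserved the attention you gave it.
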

\begin{cor}\label{Flag-ij}
For a flag polytope $P$ two adjacent facets and all the facets incident to
them (Fig.~\ref{UF1}) are pairwise different.
\end{cor}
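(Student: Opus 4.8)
The plan is to assemble the statement from the belt surrounding each of the two facets together with the straightening criterion of Corollary~\ref{cor-flag} and Lemma~\ref{ras}. I keep the notation of Figure~\ref{UF1}: $E=F_i\cap F_j$ is an edge with endpoints $F_i\cap F_j\cap F_p$ and $F_i\cap F_j\cap F_q$, the facets sharing an edge with $F_i$ are exactly $F_j,F_p,F_{i_1},\dots,F_{i_s},F_q$ in cyclic order, and the facets sharing an edge with $F_j$ are exactly $F_i,F_p,F_{j_1},\dots,F_{j_t},F_q$. I would first record that $F_p$ and $F_q$ are genuinely incident to both $F_i$ and $F_j$: the vertex $F_i\cap F_j\cap F_p$ lies on exactly three facets of the simple polytope, namely $F_i,F_j,F_p$, which therefore pairwise share the edges through that vertex; likewise for $F_q$. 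Hence the whole set to be examined is $U=\B_i\cup\B_j$, where $\B_i$ (resp. $\B_j$) denotes the family of facets incident to $F_i$ (resp. $F_j$), with $F_j\in\B_i$ and $F_i\in\B_j$.

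Next I would invoke Lemma~\ref{facet-belt}: since $P$ is flag, the $s_i$-gon $F_i$ is surrounded by an $s_i$-belt, which is precisely the cyclically ordered family $\B_i=(F_p,F_{i_1},\dots,F_{i_s},F_q,F_j)$, and $s_i\geqslant 4$ because $p_3=0$ by Corollary~\ref{flag-3}. For a $k$-belt with $k\geqslant 4$ any two equal facets would occur at non-successive positions, contradicting the defining property ($F_a\cap F_b\neq\varnothing$ iff $(a,b)$ is a successive pair); hence the facets of $\B_i$ are pairwise distinct, i.e. $F_j,F_p,F_q,F_{i_1},\dots,F_{i_s}$ are pairwise different. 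Applying the same argument to $F_j$ gives that $F_i,F_p,F_q,F_{j_1},\dots,F_{j_t}$ are pairwise different. After this the only pairs from $U$ left to check are the trivial $F_i\neq F_j$ and the cross-pairs $F_{i_a}$ versus $F_{j_b}$.

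The last step — and the only point where the adjacency of $F_i$ and $F_j$ actually meets flagness — is to show $\{F_{i_1},\dots,F_{i_s}\}\cap\{F_{j_1},\dots,F_{j_t}\}=\varnothing$. Here I would apply Corollary~\ref{cor-flag} (a flag polytope can be straightened along every edge) to the edge $E=F_i\cap F_j$ and quote Lemma~\ref{ras}, whose hypothesis for the straightening to be defined is exactly this disjointness. (One may equally argue directly: a facet $F_r$ incident to both $F_i$ and $F_j$ would, together with them, form a triple of pairwise intersecting facets, which by the absence of $3$-belts has a common vertex; this vertex lies in $F_i\cap F_j=E$, hence is an endpoint of $E$, and since only three facets meet there, $F_r$ equals $F_p$ or $F_q$, so $F_r$ is not among the $F_{i_a}$ or $F_{j_b}$.) Combining the simplicity of $\B_i$, the simplicity of $\B_j$, and this disjointness gives that all facets in $U$ are pairwise distinct. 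I do not expect a real obstacle here: the mathematical content is already contained in Lemmas~\ref{facet-belt} and~\ref{ras}, and the only care needed is bookkeeping the overlap $\B_i\cap\B_j=\{F_p,F_q\}$ so that no pair of labels is left unexamined.
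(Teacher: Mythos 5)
Your proposal is correct and follows essentially the same route as the paper: pairwise distinctness within the star of each of $F_i$ and $F_j$ separately, plus the disjointness $\{F_{i_1},\dots,F_{i_s}\}\cap\{F_{j_1},\dots,F_{j_t}\}=\varnothing$ deduced from Corollary~\ref{cor-flag} and Lemma~\ref{ras}. The only cosmetic difference is that you justify the first part via Lemma~\ref{facet-belt} (the neighbours of a facet form a belt, whose members are pairwise distinct), whereas the paper argues directly that any two facets in such a star either intersect each other or meet the central facet in different edges.
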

\begin{proof}
All the facets except for $\{F_{i_1},\dots, F_{i_s}\}$ are pairwise
different, since any two of them either intersect, or intersect $F_j$ by
different edges. The same is for $\{F_{j_1},\dots, F_{j_t}\}$. Therefore if
two facets on the fragment coincide, then one of them if $F_{i_a}$, an the
other is $F_{j_b}$. But they are different by Corollary \ref{cor-flag} and
Lemma \ref{ras}.
\end{proof}
This result and the following Lemma and Theorem for simplicial $3$-polytopes
and an~edge contraction operation were proved in \cite{V12,V15}.
\begin{lemma}[\cite{BE15}]\label{Uflag}
The polytope $Q$ obtained by straightening the flag polytope $P$ along the
edge $F_i\cap F_j$ is flag if and only if there is a $4$-belt
$(F_i,F_j,F_k,F_l)$.
\end{lemma}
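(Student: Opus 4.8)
The plan is to characterize flagness of $Q$ via the criterion from Corollary~\ref{cor-flag}: $Q$ is flag if and only if it is not a simplex and can be straightened along every one of its edges. So I would fix an arbitrary edge $E'$ of $Q$ and ask when the straightening along $E'$ is defined, translating the obstruction (by Lemma~\ref{ras} and Corollary~\ref{cor-3-s}) into the non-existence of a $3$-belt of $Q$ through $E'$. The key observation is that straightening along $F_i\cap F_j$ only affects the combinatorics in the neighborhood of the contracted edge: the facets $F'$, $F''$ of $Q$ are the merged facet $F=F_i\cup F_j\cup(\text{vertex collapse})$ together with all old facets, and by the last displayed Proposition the only facets whose number of edges (hence whose local incidences) change are $F_i$, $F_j$, $F$, and the two facets $F_p$, $F_q$ meeting $E$ at a vertex. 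Away from this patch, $3$-belts of $Q$ correspond bijectively to $3$-belts of $P$.

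Concretely, I would argue as follows. First, suppose $P$ has a $4$-belt $(F_i,F_j,F_k,F_l)$; I claim $Q$ is flag. Since $P$ is flag it has no $3$-belts, and any potential $3$-belt of $Q$ would have to involve the new facet $F$ or one of the altered facets $F_p,F_q$. A triple of pairwise intersecting facets of $Q$ containing $F$ pulls back to a "near-$3$-belt" in $P$ involving $F_i$ and $F_j$; using the structure of $P$ near $F_i\cap F_j$ from Corollary~\ref{Flag-ij} (all facets incident to $F_i$ or $F_j$ are distinct) one checks that such a triple would force either a $3$-belt in $P$ (impossible) or would force $F_k$ and $F_l$ to coincide or to intersect the "wrong" facets, contradicting that $(F_i,F_j,F_k,F_l)$ is a genuine $4$-belt. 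The case of a triple using $F_p$ or $F_q$ but not $F$ is handled similarly by pushing forward to $P$. Conversely, if $P$ has no $4$-belt $(F_i,F_j,F_k,F_l)$, I would exhibit a $3$-belt in $Q$: the merged facet $F$ together with two suitable neighbors. Because $P$ is flag, the straightening along $F_i\cap F_j$ is defined, so $\{F_{i_a}\}\cap\{F_{j_b\}}=\varnothing$; but the absence of a $4$-belt through $F_i,F_j$ should, after the contraction, produce two facets of $Q$ each meeting $F$ and meeting each other, forming a $3$-belt of $Q$ — so by Corollary~\ref{cor-flag}, $Q$ is not flag.

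The main obstacle is the bookkeeping in the converse direction: one has to be careful about whether a $3$-belt condition in $Q$ is a genuine belt (empty triple intersection, no extra incidences) rather than three facets surrounding a vertex, and to translate "no $4$-belt $(F_i,F_j,F_k,F_l)$ in $P$" — which a priori only says certain quadruples fail to be belts — into a concrete pair of facets becoming adjacent after contraction. I expect this to hinge on examining the cyclic order of facets around $F_i$ and around $F_j$: after straightening, consecutive neighbors of $F_i$ and consecutive neighbors of $F_j$ get amalgamated around $F$, and a $4$-belt of $P$ through $F_i,F_j$ is exactly the configuration that prevents two non-adjacent such neighbors from becoming adjacent in $Q$. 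Once that local picture is set up, both implications follow from Corollary~\ref{cor-3-s} applied in $Q$. I would also invoke Corollary~\ref{Flag-ij} repeatedly to rule out accidental coincidences of facets in the patch, and Theorem~\ref{3-belt-theorem}-style reasoning is not needed here since flagness of $P$ is assumed outright.
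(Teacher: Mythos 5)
Your toolkit is the right one --- localize the effect of the straightening to the patch around $E=F_i\cap F_j$, test flagness of $Q$ via Corollaries~\ref{cor-flag} and \ref{cor-3-s}, and translate $3$-belts of $Q$ through the merged facet $F$ into $4$-loops of $P$ through $F_i,F_j$ --- but the two implications you set out to prove are both false: the equivalence goes the other way. If $(F_i,F_j,F_k,F_l)$ is a $4$-belt of $P$, then in $Q$ the merged facet $F$ satisfies $F\cap F_k\supseteq F_j\cap F_k\ne\varnothing$, $F\cap F_l\supseteq F_i\cap F_l\ne\varnothing$, $F_k\cap F_l\ne\varnothing$, and $F\cap F_k\cap F_l=\varnothing$ because $F_i\cap F_k=F_j\cap F_l=\varnothing$ in $P$ and no new incidences are created away from $E$; so $(F,F_k,F_l)$ is a $3$-belt of $Q$ and $Q$ is \emph{not} flag. (Concretely: the cube is flag and its four side facets form a $4$-belt; straightening along an edge of that belt yields the triangular prism, which is not flag.) Conversely, if no such $4$-belt exists, then $Q$ \emph{is} flag: a $3$-belt of $Q$ avoiding $F$ would already be a $3$-belt of $P$, and a $3$-loop $(F,G_1,G_2)$ of $Q$ pulls back either to a $3$-loop of $P$ with both $G_1,G_2$ meeting the same facet $F_i$ or $F_j$ --- whose triple intersection is then a vertex of $P$ surviving in $Q$, since the only vertices destroyed are the endpoints of $E$ --- or to a $4$-loop $(F_i,F_j,G_2,G_1)$ with both diagonals empty, i.e.\ exactly the excluded $4$-belt. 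So the step in your first direction where the triple containing $F$ is supposed to contradict the genuineness of the $4$-belt is where the argument breaks: the triple $(F,F_k,F_l)$ arising from the $4$-belt produces no contradiction; it \emph{is} the obstruction. Likewise, in your converse there is no $3$-belt of $Q$ to exhibit.

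In fairness, you have sketched a proof of the statement as literally printed, but that statement contains a typo: the intended assertion (the one proved in \cite{BE15}, and the one actually used in Proposition~\ref{utv-unfold}, which straightens along edges for which Theorem~\ref{4-belt-theorem} guarantees no $4$-belt through $F_i,F_j$ and concludes that the result is flag) is that $Q$ is flag if and only if there is \emph{no} $4$-belt $(F_i,F_j,F_k,F_l)$. The same dropped negation occurs in Corollary~\ref{cor-3-s}, whose own proof shows that the straightening is \emph{not} defined exactly when a $3$-belt $(F_i,F_j,F_k)$ exists. Since the paper gives no proof of Lemma~\ref{Uflag} itself, the repair is at the level of the statement: swap the hypotheses and conclusions of your two directions, and the local analysis you describe (using Corollary~\ref{Flag-ij} to keep the facets of the patch distinct) then goes through as outlined above.
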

\begin{theorem}[\cite{V12,V15}]
Any flag combinatorial simple $3$-polytope can be moved to a cube $I^3$ by a
sequence of straightenings along edges in such a way that all intermediate
polytopes are flag.
\end{theorem}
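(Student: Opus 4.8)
The plan is to argue by induction on the number of facets $f_2(P)$. For a flag polytope $P$, Corollary~\ref{cor-flag} guarantees that straightening along any edge $E=F_i\cap F_j$ is a well-defined operation, and Lemma~\ref{Uflag} identifies when the outcome is again flag: namely, exactly when $E$ is not an edge of any $4$-belt of $P$. (This is consistent with $I^3$ being a dead end: every edge of the cube lies in one of its three $4$-belts.) Since straightening along an edge decreases $f_2$ by exactly one, the theorem reduces to two assertions: (i) a flag simple $3$-polytope with the least possible number of facets is $I^3$; and (ii) every flag simple $3$-polytope $P\ne I^3$ has an edge lying in no $4$-belt. Granting these, the induction closes immediately: given flag $P\ne I^3$, pick by (ii) an edge $E$ in no $4$-belt, straighten along it (the result is a flag polytope with one fewer facet, by Corollary~\ref{cor-flag} and Lemma~\ref{Uflag}), and repeat; after finitely many steps the facet count reaches its minimum, and by (i) the polytope is $I^3$.

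Assertion (i) is elementary. If $P$ is flag then $p_3=0$ (Corollary~\ref{flag-3}), so $s_i\ge 4$ for every facet $F_i$. Euler's formula together with simplicity gives $f_0=2f_2-4$, $f_1=3f_2-6$, whence $\sum_i s_i=2f_1=6f_2-12\ge 4f_2$, i.e.\ $f_2\ge 6$; moreover there is no flag polytope with $f_2\le 5$ (for $f_2=5$ one would need $\sum_i s_i=18<4\cdot5$, and $f_2=4$ is the simplex $\Delta^3$, which is not flag). When $f_2=6$, equality forces $s_i=4$ for all $i$, so $P$ has six quadrilateral facets, eight vertices and twelve edges; the only such simple $3$-polytope is the combinatorial cube $I^3$ (e.g.\ by Steinitz' theorem). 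Hence $f_2(P)\ge 6$ for flag $P$, with equality exactly for $I^3$.

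Assertion (ii) is the heart of the matter, and the one real obstacle. I would prove it by contradiction: suppose $P\ne I^3$ is flag, has every edge contained in a $4$-belt, and is minimal among such polytopes; then $f_2(P)\ge 7$ and, by the Euler consequence $2p_4+p_5=12+\sum_{k\ge7}(k-6)p_k\ge 12$, the polytope has a facet $F$ with $s_F\in\{4,5\}$. By Lemma~\ref{facet-belt}, $F$ is surrounded by an $s_F$-belt $\B_F=(G_1,\dots,G_{s_F})$, splitting $\partial P$ into $|\B_F|$, the face $F=|\W_1|$, and a disk $|\W_2|$ on the far side. A short local analysis of the edges $F\cap G_j$ and of the edges of $\B_F$ — carried out in the spirit of Lemmas~\ref{4-loop-lemma}--\ref{5-loop-lemma} — shows that forcing every such edge to lie in a $4$-belt imposes very rigid adjacencies between $\B_F$, the disk $|\W_2|$, and their common neighbours; propagating these constraints, one is driven either to produce an infinite chain of distinct facets or to collapse $|\W_2|$ to a single facet, so that $P$ consists of $F$, the belt $\B_F$, and one more facet — i.e.\ $f_2(P)=6$ and $P=I^3$, a contradiction. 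Equivalently, one may dualise to a flag triangulation $K$ of $S^2$ with at least seven vertices, take a vertex $v$ of minimal degree $d\ge 4$, and locate inside the star of $v$ an edge whose contraction keeps $K$ flag; the cases $d\in\{4,5\}$ together with the flag and planarity constraints pin down the configuration, the subtle point being to exclude the octahedral rigidity — which is precisely where $f_2(P)\ge 7$ is used. This is essentially the route of \cite{V12,V15}.

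In short, I expect everything except (ii) to be routine, the straightening and belt machinery of Sections~\ref{Belts}--\ref{Trunc} being already in place; the difficulty is the local casework of (ii), i.e.\ producing an edge outside every $4$-belt, where the delicate part is to rule out all the ways the small facet $F$ and its surrounding belt could be ``locked'' into $4$-belts without the polytope reducing to the cube.
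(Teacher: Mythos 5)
The paper does not prove this theorem itself: it cites it to Volodin \cite{V12,V15}, so there is no internal argument to compare against. Your reduction to an induction on $f_2$, resting on (i) $f_2\geqslant 6$ for flag simple $3$-polytopes with equality only for the cube, and (ii) every flag $P\ne I^3$ admits an edge $E=F_i\cap F_j$ such that no $4$-belt of the form $(F_i,F_j,F_k,F_l)$ exists, is the correct skeleton, and your Euler-count proof of (i) is sound. Two remarks on your use of Lemma~\ref{Uflag}. As printed, that lemma asserts the straightened polytope is flag \emph{if and only if there is} such a $4$-belt, which is the negation of the condition you (correctly) use; this must be a sign slip in the text, since the cube witnesses the opposite: every edge of $I^3$ has its two facets consecutive in one of the three $4$-belts, yet straightening drops $f_2$ to $5$, and no simple $3$-polytope with five facets can be flag. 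Also, ``an edge lying in no $4$-belt'' is imprecise: the relevant condition is not that $E$ avoid the $1$-skeleton of $|\B|$, but that $F_i$ and $F_j$ never occur as consecutive facets of a $4$-belt.

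The genuine gap is assertion (ii), and you flag it yourself. The passage ``a short local analysis \dots\ shows that forcing every such edge to lie in a $4$-belt imposes very rigid adjacencies; propagating these constraints one is driven to \dots'' names what must be proved rather than proving it. That is the whole content of the theorem: once (ii) is granted the induction is bookkeeping, and (ii) requires a genuine case analysis around a small facet (or, dually, around a vertex of degree $4$ or $5$ in a flag triangulation of $S^2$, ruling out the octahedron), which is precisely what Volodin carries out in \cite{V12,V15}. So your proposal is an accurate roadmap consistent with the cited proof's strategy, but it does not establish the statement.
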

This theorem was straightened in \cite{BE15}.
\begin{theorem}[\cite{BE15}]\label{flag-theorem}
A simple $3$-polytope $P$ is flag if and only if it is combinatorially
equivalent to a polytope obtained from the cube by a sequence of
$(1,k)$-truncations, $k\geqslant 4$, and $(2,k)$-truncation, $k\geqslant 6$.
\end{theorem}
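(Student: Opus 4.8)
\emph{The ``if'' direction.} The cube $I^3$ is flag: it is not $\Delta^3$, and since opposite facets of $I^3$ are disjoint, any three pairwise‑intersecting facets of $I^3$ share a vertex, so $I^3$ has no $3$‑belt. By the lemma stated above --- an $(s,k)$‑truncation of a flag polytope is again flag exactly when $0<s<k-2$ --- a $(1,k)$‑truncation preserves flagness for $k\geqslant 4$ and a $(2,k)$‑truncation for $k\geqslant 5$, so every polytope built as in the statement is flag.

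\emph{The dictionary.} A straightening merging two adjacent facets $F_i,F_j$ of a simple polytope into a facet $F$ is inverse to the $(s,k)$‑truncation cutting $F$ into the $(s+3)$‑gon $F_i$ and the $(k-s+1)$‑gon $F_j$; thus $s=|F_i|-3$, $k=|F_i|+|F_j|-4$, and by the identity between an $(s,k)$‑truncation and its complementary $(k-s-2,k)$‑truncation one may take $F_i$ to be the smaller piece, $|F_i|\leqslant|F_j|$. If the two polytopes are flag then $p_3=0$, so $|F_i|,|F_j|\geqslant 4$; then $|F_i|=4$ yields a $(1,k)$‑truncation with $k=|F_j|\geqslant 4$, while $|F_i|=5$ (so $|F_j|\geqslant 5$) yields a $(2,k)$‑truncation with $k=|F_j|+1\geqslant 6$. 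A $(2,4)$‑truncation never arises (it would create a triangular facet), and a ``$(2,5)$''‑truncation is a $(1,5)$‑truncation by the complementary identity. Hence: \emph{an inverse straightening between flag polytopes is one of the allowed operations exactly when the merged facets satisfy $\min(|F_i|,|F_j|)\leqslant 5$.}

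\emph{Reduction of the ``only if'' direction.} The theorem of \cite{V12,V15} moves a flag $P$ to $I^3$ by flag‑preserving straightenings, but these need not meet the size bound above; moreover, since every $(s,k)$‑truncation raises the facet count by exactly one, such a straightening cannot be broken into smaller ones, so merely post‑processing the sequence of \cite{V12,V15} does not help. Instead I would argue by induction on the number $n$ of facets. If $n\leqslant 6$ then \eqref{pk} together with Corollary~\ref{flag-3} ($p_3=0$) forces $P\simeq I^3$ and there is nothing to prove. If $n\geqslant 7$, so $P\not\simeq I^3$, the inductive step reduces to the \textbf{Claim}: every flag simple $3$‑polytope $P\not\simeq I^3$ admits a flag‑preserving straightening along an edge $E=F_i\cap F_j$ with $\min(|F_i|,|F_j|)\leqslant 5$. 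Granting it, straighten along such an $E$; the result $Q$ is flag with $n-1\geqslant 6$ facets, so by the inductive hypothesis $Q$ is built from $I^3$ by allowed operations, and prepending the inverse of our straightening --- a $(1,k)$‑truncation with $k\geqslant 4$ or a $(2,k)$‑truncation with $k\geqslant 6$, by the dictionary --- completes the induction.

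\emph{Proving the Claim.} This is the crux. By Corollary~\ref{cor-flag} every straightening of a flag polytope is already defined, and by Lemma~\ref{Uflag} a straightening along $F_i\cap F_j$ stays flag exactly when an appropriate $4$‑belt runs through $F_i$ and $F_j$; so one must exhibit a $4$‑belt through an edge one of whose two facets has at most $5$ edges. The plan is to start from a facet $F$ of minimal degree: $p_3=0$ gives $|F|\in\{4,5,\dots\}$, and by Lemma~\ref{facet-belt} $F$ is surrounded by an $|F|$‑belt $\B$. Using Corollary~\ref{Flag-ij} and the loop analysis of Section~\ref{Belts} (Lemmas~\ref{3-loop-lemma} and~\ref{4-loop-lemma}, together with higher analogues for larger faces) to control the loops bordered by $\B$ and by the neighbours of $F$, one locates --- incident to $F$ or to a neighbour of $F$ --- an edge through which a $4$‑belt passes and with one adjacent facet of size $\leqslant 5$; straightening there proves the Claim. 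The main obstacle will be the exceptional configurations in which no such edge is visible: there the local structure around $F$ is so rigid that $P$ is forced into a nanotube‑type shape, in the spirit of Theorems~\ref{5-belt-theorem} and~\ref{131-theorem}, for which the Claim is checked directly. Organising this case analysis --- i.e. showing that a \emph{small} flag‑preserving straightening always exists away from $I^3$ --- is precisely what \cite{BE15} adds to the theorem of \cite{V12,V15}.
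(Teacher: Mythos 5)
The paper cites this theorem from \cite{BE15} and does not reproduce its proof, so there is no internal argument to compare against; I can only assess your proposal on its own terms. Your ``if'' direction is correct, and the dictionary is the right translation: an inverse straightening along $E=F_i\cap F_j$ between flag polytopes is one of the allowed truncations exactly when $\min(|F_i|,|F_j|)\leqslant 5$, since $s=\min(|F_i|,|F_j|)-3$ and $k=|F_i|+|F_j|-4$ after using the complementary identity. The reduction to induction on the number of facets, with base case $n\leqslant 6\Rightarrow P\simeq I^3$ via \eqref{pk} and $p_3=0$, is also sound, and Lemma~\ref{Uflag} is indeed the right criterion for a straightening to stay in the flag class.

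The genuine gap is that the argument stops exactly where the theorem starts being nontrivial. The whole of the ``only if'' direction has been compressed into the \textbf{Claim} that every flag $P\not\simeq I^3$ admits a flag-preserving straightening along an edge one of whose two facets has at most $5$ sides, and that Claim is not proved. Your sketch --- take a facet $F$ with $|F|\in\{4,5\}$, which exists since $2p_4+p_5\geqslant 12$; hunt for a $4$-belt through $F$ or a neighbour; treat nanotube-like exceptional configurations separately --- is a plausible shape for the proof, but finding that $4$-belt, and in particular ruling out or handling the cases where the loop analysis around $F$ returns nothing usable, is precisely the hard part. You acknowledge as much when you write that organizing this case analysis ``is precisely what \cite{BE15} adds,'' which is an honest statement of where the missing content lies rather than a proof of it. As written, the proposal is a correct reduction and a credible plan, but the key lemma is asserted, not established.
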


\begin{utv}\label{utv-unfold}
Let $P$ be a simple polytope with all facets pentagons and hexagons with at
most one exceptional facet $F$ being a quadrangle or a heptagon. Then it can
be straightened along any edge that does not intersect the quadrangular facet
by a vertex to obtain a flag polytope.
\end{utv}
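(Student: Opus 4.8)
The plan is to reduce the statement to a combinatorial criterion already available in the excerpt, namely Corollary~\ref{cor-3-s}: straightening along an edge $E=F_i\cap F_j$ is defined if and only if $P\not\simeq\Delta^3$ and there is a $3$-belt $(F_i,F_j,F_k)$ for some facet $F_k$. Since $P$ has all facets pentagons and hexagons with at most one exceptional quadrangle or heptagon, the parameters $p_3=0$, $p_4\leqslant 2$, $p_7\leqslant 1$, $p_k=0$ for $k\geqslant 8$ are satisfied, so Theorem~\ref{3-belt-theorem} applies and tells us $P$ has \emph{no} $3$-belts at all; in particular $P$ is a flag polytope and certainly $P\not\simeq\Delta^3$. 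Hence by Corollary~\ref{cor-3-s} (or equivalently Corollary~\ref{cor-flag}) the straightening along \emph{every} edge $E$ of $P$ is defined, whether or not $E$ touches the exceptional facet.

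The second half is to check that the resulting polytope $Q$ is again one with all facets pentagons and hexagons except for at most one quadrangle or heptagon, so that in particular it is flag. Here I would use the local description of how facets change under a straightening along $E=F_i\cap F_j$ (the Proposition right before Lemma~\ref{ras}): the facets not meeting $E$ are unchanged; $F_i$ and $F_j$ merge into one facet $F$ with $s_F=s_{F_i}+s_{F_j}-4$; and the two facets $F_p,F_q$ meeting $E$ only in a vertex each lose one edge. So the bad things that can happen are: (i) $F$ is too small or too big, or (ii) one of $F_p,F_q$ drops to a quadrangle (if it was a pentagon) or, if it was the quadrangle, to a triangle. Assuming $E$ does not meet the quadrangular facet by a vertex, $F_p$ and $F_q$ are pentagons or hexagons, so after losing an edge they become quadrangles or pentagons — fine, each contributes at most one new quadrangle. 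The merged facet $F$ has $s_F=s_{F_i}+s_{F_j}-4\in\{5+5-4,\dots\}$; since $F_i,F_j$ are pentagons/hexagons (at most one of them the exceptional heptagon, and $E$ avoids the quadrangle), $s_F$ ranges over $6,7,8$ in the extreme cases. The apparent danger is an octagon or two quadrangles appearing simultaneously.

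The main obstacle, then, is ruling out these degenerate outcomes, and the key is that $P$ has \emph{no $3$-belt and no $4$-belt through a non-quadrangle}. For $F_p$ (or $F_q$) to become a quadrangle, it must already have been a pentagon and must have edges in common with $F_i$, $F_j$ on two successive edges; the two facets $F_p$ and $F_q$ cannot both shrink to quadrangles and remain a valid simple $3$-polytope with $\leqslant 1$ quadrangle unless at most one of them was a pentagon — but if both $F_p,F_q$ were hexagons, each becomes a pentagon and nothing is lost. One shows $F_i,F_j$ cannot both be hexagons with $F_p,F_q$ both pentagons in a configuration that produces an octagon together with two quadrangles: counting with formula~(\ref{pk}) for $Q$, an octagon forces $\sum_{k\geqslant7}(k-6)p_k\geqslant 2$, which together with the fixed $p_5$ budget and $Q$ having at most one exceptional facet is contradictory. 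More directly, since $Q$ is obtained from $P$ by one straightening and $P$ is flag, Lemma~\ref{Uflag} guarantees $Q$ is flag precisely when $P$ has a $4$-belt $(F_i,F_j,F_k,F_l)$; by Theorem~\ref{4-belt-theorem}, $P$ has a $4$-belt exactly when $P$ has a quadrangle, and that unique $4$-belt surrounds it. So I would split into cases: if $P$ has no quadrangle, then the straightened $Q$ has at most the two new quadrangles coming from $F_p,F_q$ — and here one checks that $F_p=F_q$ is impossible (they meet $F$ in opposite vertices) so they are distinct, and at most one of them was a pentagon because otherwise the edge-count forces $s_F=8$ contradicting the pentagon/hexagon bound on $F_i,F_j$; if $P$ has a quadrangle, then since $E$ avoids it, $E$ is disjoint from the unique $4$-belt, so by Lemma~\ref{Uflag} $Q$ is flag, and the facet data shows $Q$ again has exactly one quadrangle (the old one) unless a new one appears, which is excluded by the same edge-count. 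In every case $Q$ is a simple $3$-polytope with all facets pentagons/hexagons except at most one quadrangle or heptagon, hence flag by Theorem~\ref{3-belt-theorem}, completing the proof.
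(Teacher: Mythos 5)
The first half of your argument — that the straightening along any edge is actually defined — is correct and matches the paper: by Theorem~\ref{3-belt-theorem} $P$ has no $3$-belts, hence is flag, hence by Corollary~\ref{cor-flag} (or Corollary~\ref{cor-3-s}, read with the intended negation) the straightening along every edge is defined.

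The second half has a genuine gap. You are trying to show that the straightened polytope $Q$ again has all facets pentagons and hexagons with at most one exceptional quadrangle or heptagon, and then to deduce that $Q$ is flag by applying Theorem~\ref{3-belt-theorem} to $Q$. That first claim is simply false: nothing in the hypothesis prevents $F_i$ and $F_j$ from both being hexagons, in which case the merged facet $F$ has $s_F=6+6-4=8$ edges, and nothing prevents both $F_p$ and $F_q$ from being pentagons, which then become two quadrangles. This actually occurs: in a fullerene such as $C_{60}$, where each vertex lies on exactly one pentagon, any hexagon--hexagon edge $E$ has both its endpoint third-facets equal to pentagons, so the straightened polytope has an octagon and two quadrangles, and the hypotheses $p_k=0$ for $k\geqslant 8$, $p_4\leqslant 2$ of Theorem~\ref{3-belt-theorem} fail for $Q$. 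Your intermediate justifications do not rescue this: the assertion that ``at most one of $F_p,F_q$ was a pentagon because otherwise $s_F=8$'' is a non sequitur (the degrees of $F_p,F_q$ do not determine $s_F=s_{F_i}+s_{F_j}-4$), and the Euler identity~(\ref{pk}) is perfectly consistent with $p_4=2$, $p_8=1$, $p_5=10$ and gives no contradiction. Note also that the Proposition claims only that $Q$ is flag, not that $Q$ has the pentagon/hexagon form, so you were trying to prove more than is stated — and more than is true.

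The correct route is the one you gesture at but do not carry through: apply Lemma~\ref{Uflag} directly (with the implication read as ``$Q$ is flag if and only if there is \emph{no} $4$-belt $(F_i,F_j,F_k,F_l)$''; as printed the lemma's statement is off by a negation, and your text uses it inconsistently, once as printed and once as corrected). By Theorem~\ref{4-belt-theorem}, either $P$ has no $4$-belt at all (no quadrangle), or $P$ has exactly one $4$-belt and it surrounds the quadrangle $G$. In the latter case two consecutive facets $F_i,F_j$ of that belt meet at a vertex of $G$, so the edge $F_i\cap F_j$ contains a vertex of $G$; since the hypothesis forbids $E$ from meeting the quadrangle in a vertex, $F_i,F_j$ are never consecutive in the $4$-belt. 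In both cases there is no $4$-belt of the form $(F_i,F_j,F_k,F_l)$, so Lemma~\ref{Uflag} gives that $Q$ is flag, with no need to control the facet sizes of $Q$ at all.
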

\begin{proof}
This follows from Theorems \ref{3-belt-theorem} and \ref{4-belt-theorem},
Corollary  \ref{cor-3-s} and Lemma \ref{Uflag}.
\end{proof}

\section{Realization of fullerenes}\label{Main-Th}
It turned out that for fullerenes Theorem \ref{flag-theorem} can be
strengthened.

\begin{theorem}\label{C-F-theorem} Let $P$ be combinatorial fullerene.
\begin{enumerate}
\item $P$ can be obtained from the dodecahedron by a sequence of the
    following operations:
\begin{enumerate}
\item  An operation drawn on Fig.~\ref{OP-12}a). This is a movement
    from $k$-th to $(k+1)$-th fullerene in Family I.
\item  An operation drawn on Fig.~\ref{OP-12}b). This is a movement
    from $k$-th to $(k+1)$-th fullerene in Family II.
\begin{figure}[h]
\begin{tabular}{ccc}
\includegraphics[height=2cm]{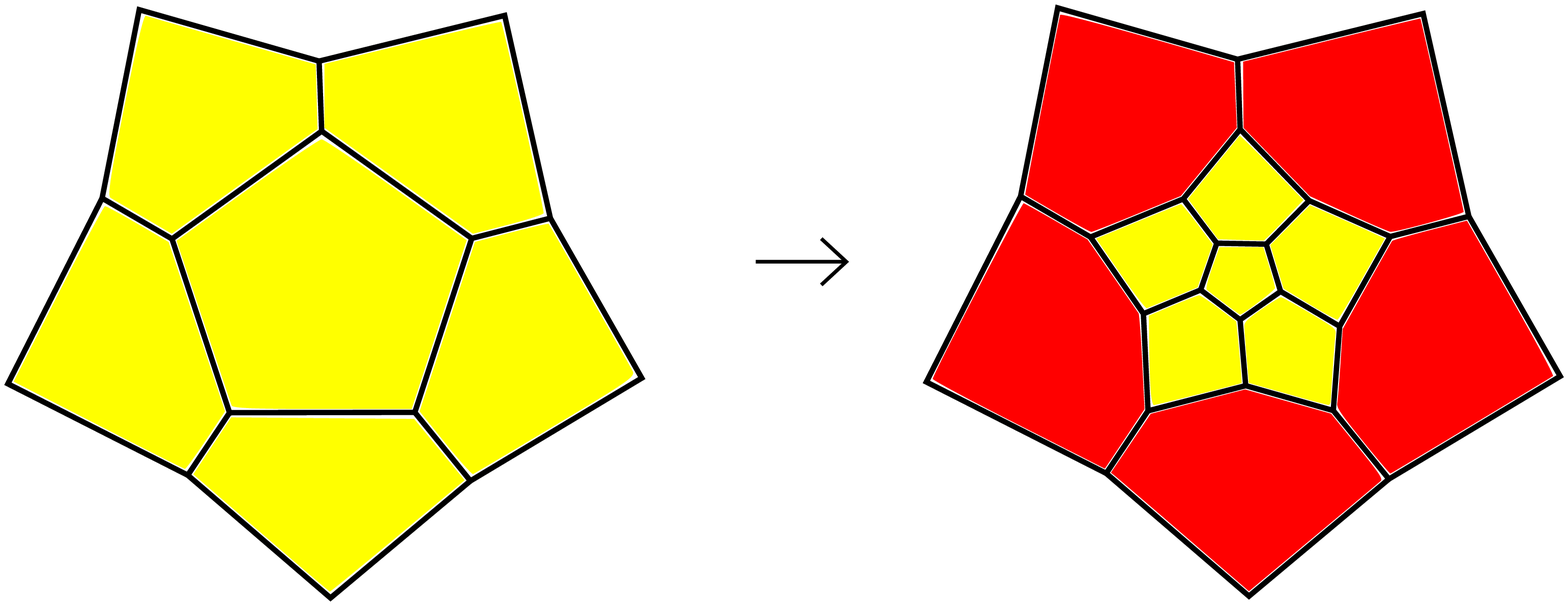}&&\includegraphics[height=2cm]{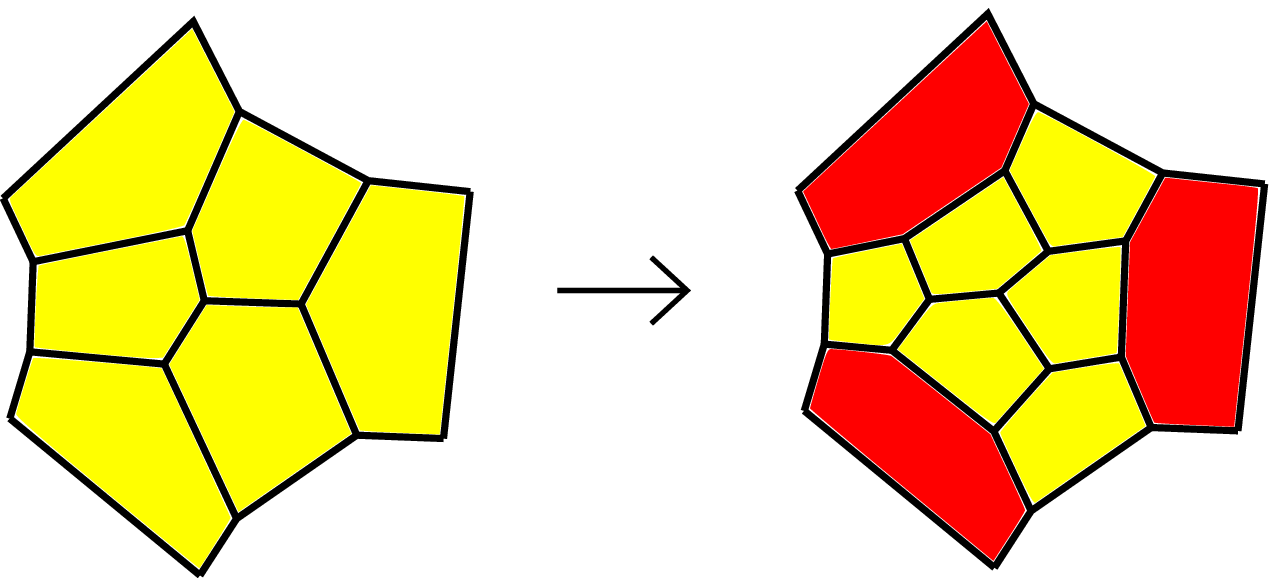}\\
a)&&b)
\end{tabular}
\caption{}\label{OP-12}
\end{figure}

\item The Endo-Kroto operation drawn on Fig.~\ref{OP-34}a).
\item An operation drawn on Fig.~\ref{OP-34}b).
\begin{figure}[h]
\begin{tabular}{ccc}
\includegraphics[height=2cm]{EK.eps}&&\includegraphics[height=2cm]{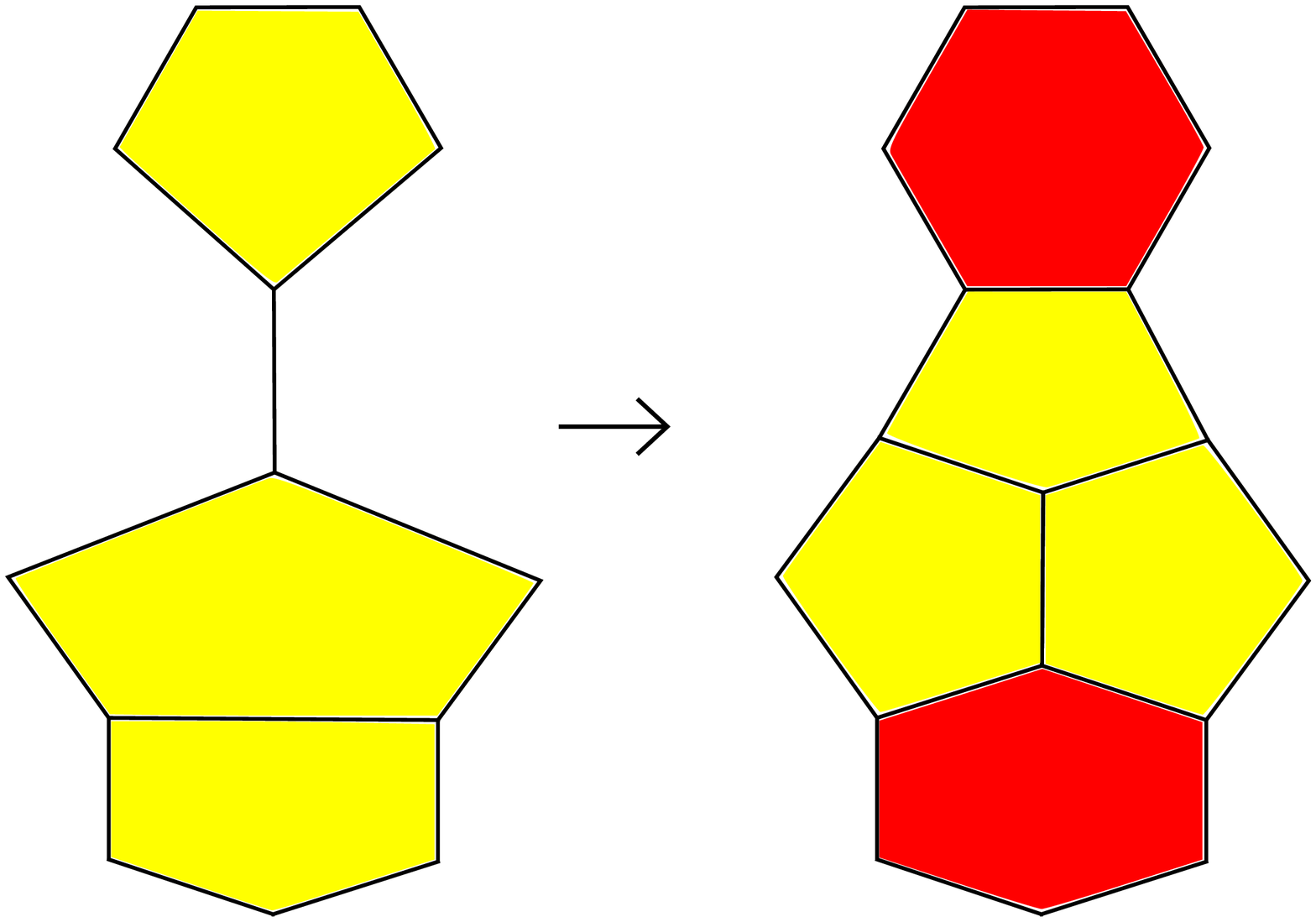}\\
a)&&b)
\end{tabular}
\caption{}\label{OP-34}
\end{figure}
\item  An operation drawn on Fig.~\ref{OP-5}.
\begin{figure}[h]
\begin{center}
\includegraphics[height=2cm]{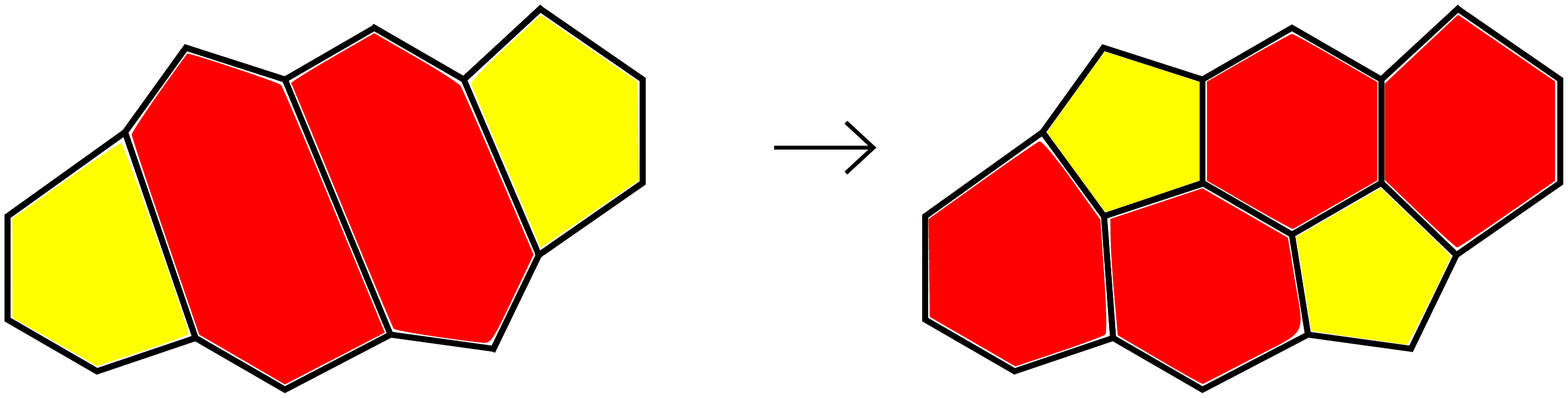}\\
\end{center}
\caption{}\label{OP-5}
\end{figure}
\item An operation drawn on Fig.~\ref{OP-67}a). On the left there can
    be an arbitrary number $k\geqslant 3$ of hexagons;
\item An operation drawn on Fig.~\ref{OP-67}b). On the left there can
    be arbitrary number $k_1,k_2\geqslant 1$,
    $\max\{k_1,k_2\}\geqslant 2$, of hexagons before and after the
    corner hexagon.
\begin{figure}[h]
\begin{tabular}{cc}
\includegraphics[height=1.2cm]{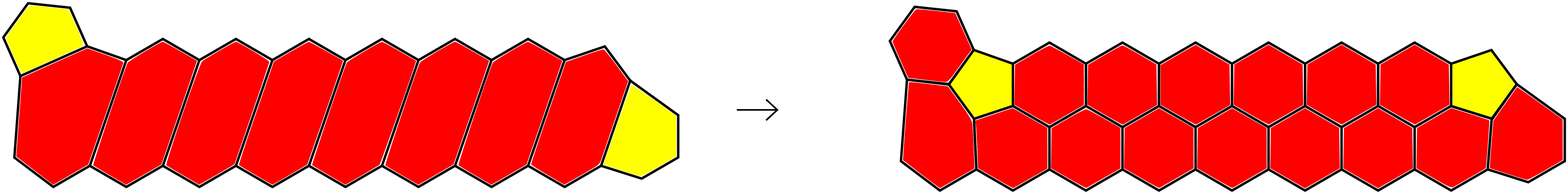}&\includegraphics[height=1.8cm]{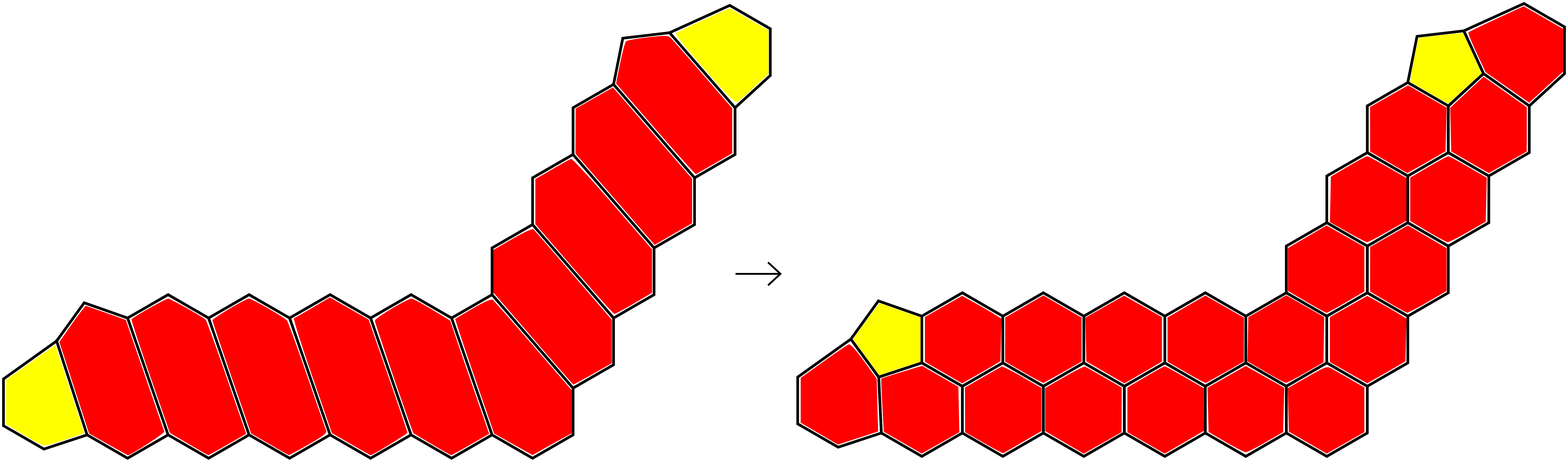}\\
a)&b)
\end{tabular}
\caption{}\label{OP-67}
\end{figure}
\end{enumerate}
\item Any of the operations in (1) is decomposed as a sequence of
    $(1;4,5)$-, $(1;5,5)$-, $(2,6;4,5)$-, $(2,6;5,5)$-, $(2,6;5,6)$-,
    $(2,7;5,5)$-, and $(2,7;5,6)$-truncations. All the intermediate
    polytopes have facets pentagons, hexagons, and no more than one
    exceptional facet, which is either a quadrangle, or a heptagon.
\item Any of the operations in (1) is invertible  in the following sense:
    if a combinatorial fullerene $P$ contains a fragment on the right,
    then this fragment appears after an application of the corresponding
    operation to some fullerene $Q$, which gives the fullerene $P$
\end{enumerate}
\end{theorem}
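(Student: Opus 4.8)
The plan is to prove the three parts in order, using the belt machinery of Section \ref{Belts} as the combinatorial engine and the straightening calculus of Section \ref{Trunc} as the operational engine. For part (1), the scheme announced in the introduction is: (i) show that every fullerene other than the dodecahedron contains one of the ``right-hand'' fragments of Figs.~\ref{OP-12}--\ref{OP-67}; (ii) show that from such a fragment the inverse operation produces a smaller fullerene. Step (i) is the real content. I would argue by a case analysis on the local structure around an arbitrary pentagon. By Corollary to (\ref{pk}) there are exactly $12$ pentagons; by Theorem \ref{5-belt-theorem} the five facets surrounding a fixed pentagon form a $5$-belt. If a fullerene is in Family I or Family II (detected by the presence of the fragments of Figs.~\ref{Cap-1}a and \ref{131313}a via Theorems \ref{5-belt-theorem}(3) and \ref{131-theorem}), then operations (a) or (b) apply and strip off one ring. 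Otherwise I would look at how pentagons cluster: either there is an edge between two pentagons (an incident pair), giving access to Endo--Kroto-type fragments (c), (d), (e), or all pentagons are isolated, in which case a careful walk through the hexagonal region separating nearby pentagons — using Lemma \ref{loop-lemma} to control the loops bordering $5$-belts and $6$-loops — forces one of the ``nanotube-cap with a jog'' fragments (f) or (g). The bookkeeping here is where most of the work lies: one must enumerate the possible ways the $12$ pentagons can sit relative to one another and show the list of fragments is exhaustive.

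For part (2), each operation in (1) is realized as an explicit composition of the seven allowed truncations. Since an $(s,k)$-truncation raises $p_6$ by one precisely when it turns the relevant local polygons into a pentagon-and-hexagon configuration, I would simply exhibit, for each of the seven pictured operations, a concrete ordered sequence of truncations of types $(1;4,5)$, $(1;5,5)$, $(2,6;4,5)$, $(2,6;5,5)$, $(2,6;5,6)$, $(2,7;5,5)$, $(2,7;5,6)$ transforming the left fragment to the right fragment, and check at each intermediate stage that the polytope has only pentagons, hexagons, and at most one quadrangle-or-heptagon exceptional facet. The constraint list in the Construction section ($0\le s\le k-2$, and the flag-preservation range $0<s<k-2$) together with the facet-count bookkeeping from the Proposition describing the change of the face lattice under $(s,k)$-truncation makes each such verification routine; the only subtlety is to order the truncations so the exceptional facet is never doubled, which Proposition \ref{utv-unfold} guarantees is possible as long as we never truncate along an edge meeting the quadrangle at a vertex.

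For part (3), invertibility, I would argue that if a fullerene $P$ contains the right-hand fragment of one of the operations, then straightening along the edges introduced by that operation is well defined and returns a fullerene $Q$ with operation applied to $Q$ giving $P$. By Corollary \ref{4-belt-ful} fullerenes have no $4$-belts, so by Lemma \ref{Uflag} every straightening of a fullerene along an admissible edge keeps the polytope flag; by Corollary \ref{cor-3-s} the straightening along $F_i\cap F_j$ is defined exactly when there is no $3$-belt through that edge, which Theorem \ref{3-belt-theorem} rules out for our polytopes with $p_3=0$, $p_4\le 2$, $p_7\le 1$, $p_k=0$ ($k\ge 8$) — precisely the class of all intermediate polytopes in part (2). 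So the sequence of truncations furnished by part (2) is reversible step by step, and the last straightening lands on $Q$; one then checks $Q$ is again a fullerene (all facets pentagons and hexagons) by tracking the polygon sizes back through the composition. The main obstacle in the whole proof is unquestionably step (i) of part (1): proving that the seven listed fragments form a \emph{complete} list of local configurations forced on any non-dodecahedral fullerene — this requires a meticulous but finite combinatorial census of pentagon neighborhoods, leaning throughout on Lemmas \ref{loop-lemma}, \ref{5-loop-lemma}, \ref{131313-lemma} and Theorems \ref{5-belt-theorem}, \ref{131-theorem} to pin down the hexagonal patches between pentagons.
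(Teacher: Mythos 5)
Your overall structure matches the paper's: first a fragment classification, then an explicit truncation decomposition of each operation, then invertibility via straightenings justified by the belt theorems. Parts (2) and (3) of your plan are essentially what the paper does: Lemma~\ref{Inverse-lemma} exhibits explicit picture-by-picture sequences of straightenings (the inverses of the seven truncation types) and verifies that each intermediate polytope has at most one exceptional facet, and the well-definedness on intermediates is exactly Proposition~\ref{utv-unfold}, which combines Theorems~\ref{3-belt-theorem} and~\ref{4-belt-theorem} with Corollary~\ref{cor-3-s} and Lemma~\ref{Uflag} in the way you indicate.

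The gap is in part~(1), and specifically in the IPR case, which you correctly identify as the hard step but for which you propose the wrong tool. You suggest ``a careful walk through the hexagonal region separating nearby pentagons, using Lemma~\ref{loop-lemma} to control the loops bordering $5$-belts and $6$-loops'' and a ``census of pentagon neighborhoods.'' The paper does not attempt anything like a global census of how the twelve pentagons can sit. Instead, Lemma~\ref{IPR-lemma} picks a \emph{shortest thick path} $\mathcal{P}=(F_{i_0},\dots,F_{i_{s+1}})$ joining two distinct pentagons and derives a short list of structural consequences of minimality: all interior facets are hexagons, the path is simple, non-successive facets are disjoint, no vertex is hit thrice, consecutive turns cannot be in the same direction, and crucially one can re-route to a shortest path with \emph{at most one turn}. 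That single-turn normalization is what collapses the a priori infinite zoo of hexagonal separations to exactly the two parametric fragment families of Fig.~\ref{Roads-F} (straight road / road with one jog), and a further minimality argument shows every facet in the resulting fragment is distinct. Your loop/belt census does not produce this normalization; Lemma~\ref{loop-lemma} governs loops that border a $k$-\emph{belt}, whereas the hexagonal region between two isolated pentagons in a general IPR fullerene is not organized into nested belts, so the control you invoke is not available. Without the shortest-path-with-one-turn device, the exhaustiveness of the fragment list, which is the heart of the theorem, is not established. A smaller point: you also assign operation~(e) to the non-IPR branch, whereas in the paper it arises from the IPR branch (the $s=1$ road, Fig.~\ref{Roads-F}c); the non-IPR Lemma~\ref{NIPR-lemma} produces only the fragments feeding operations (a)--(d).
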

\begin{proof} The proof consists of two parts.
\begin{enumerate}
\item First we prove that any combinatorial fullerene except for the
    dodecahedron contains one of the fragments on the right.
\item Then we prove that any fragment on the right can be moved to the
    fragment on the left by a sequence of straightenings along edges,
    which will be defined due to Proposition \ref{utv-unfold}.
\end{enumerate}
Part (1).
\begin{lemma}\label{NIPR-lemma}
 Let $P$ be a fullerene. If $P$ is not an $IPR$-fullerene, then
it contains one of the following fragments drawn on Fig.~\ref{NIPR-F} with
all the facets on the fragment being pairwise different.
\end{lemma}
\begin{figure}[h]
\begin{tabular}{cccc}
\includegraphics[height=3cm]{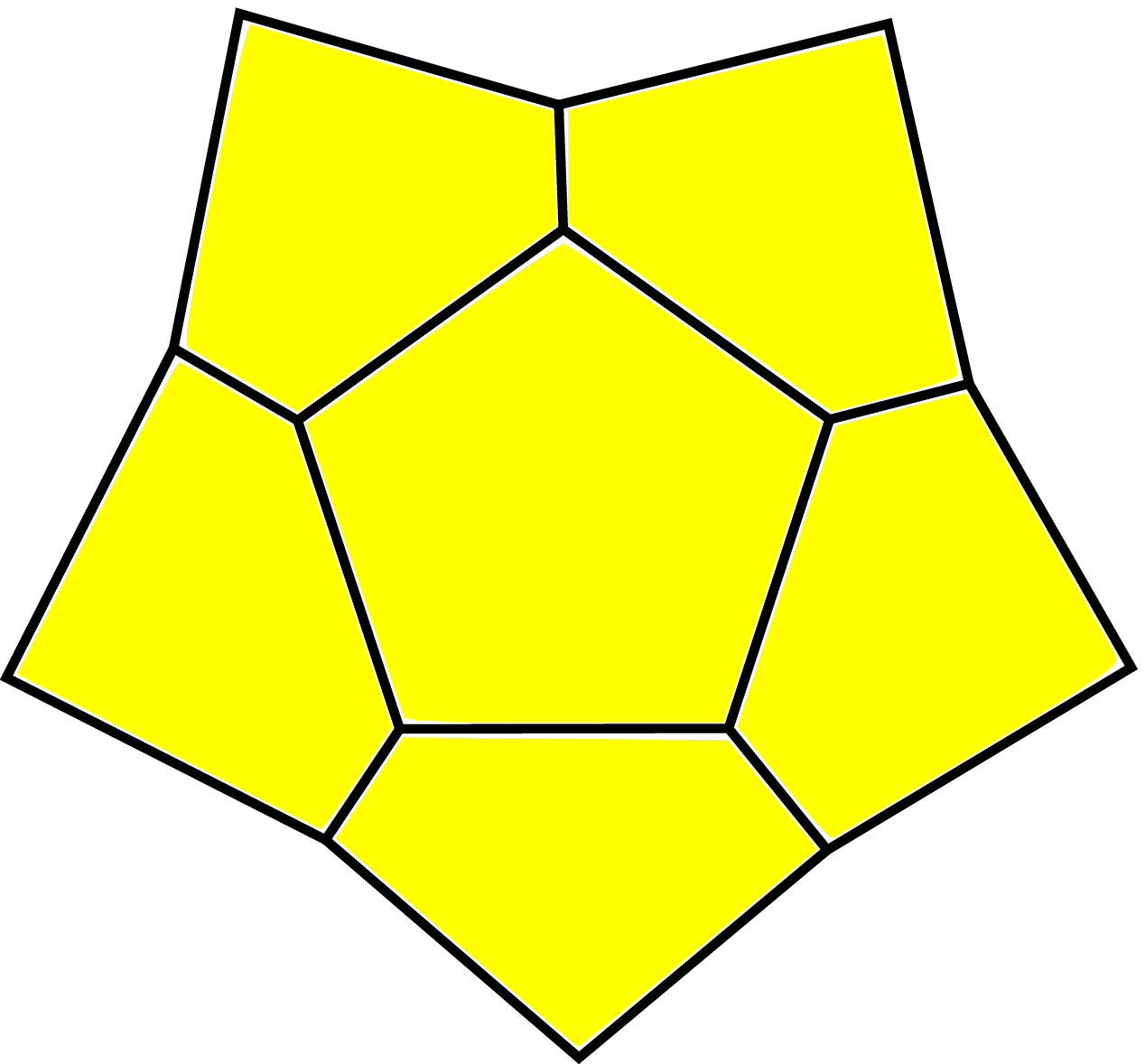}&\includegraphics[height=3cm]{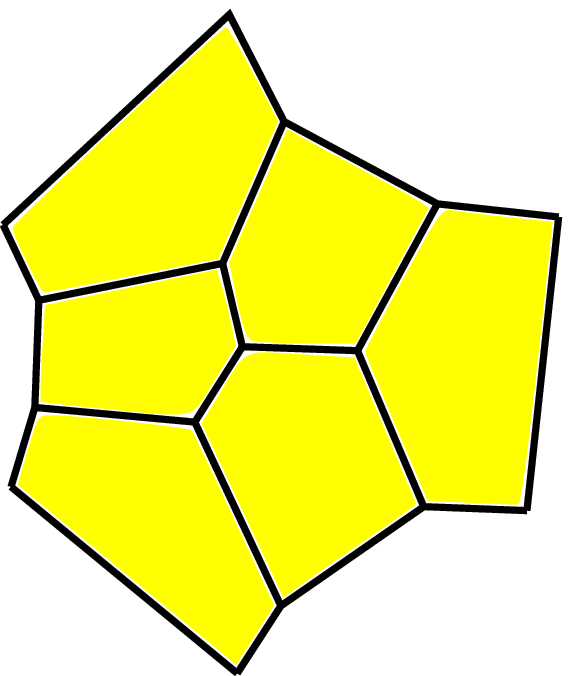}&\includegraphics[height=3cm]{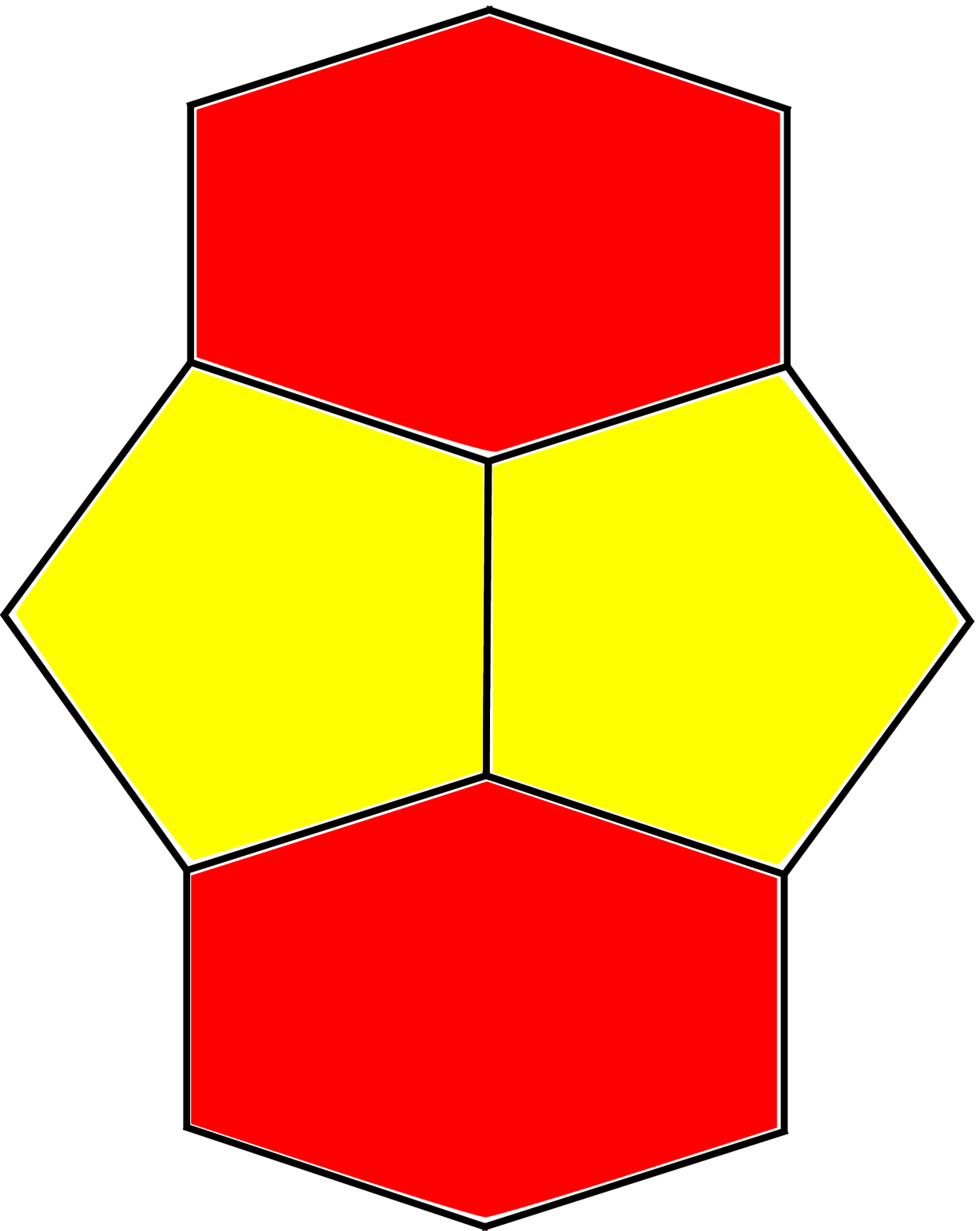}&\includegraphics[height=3cm]{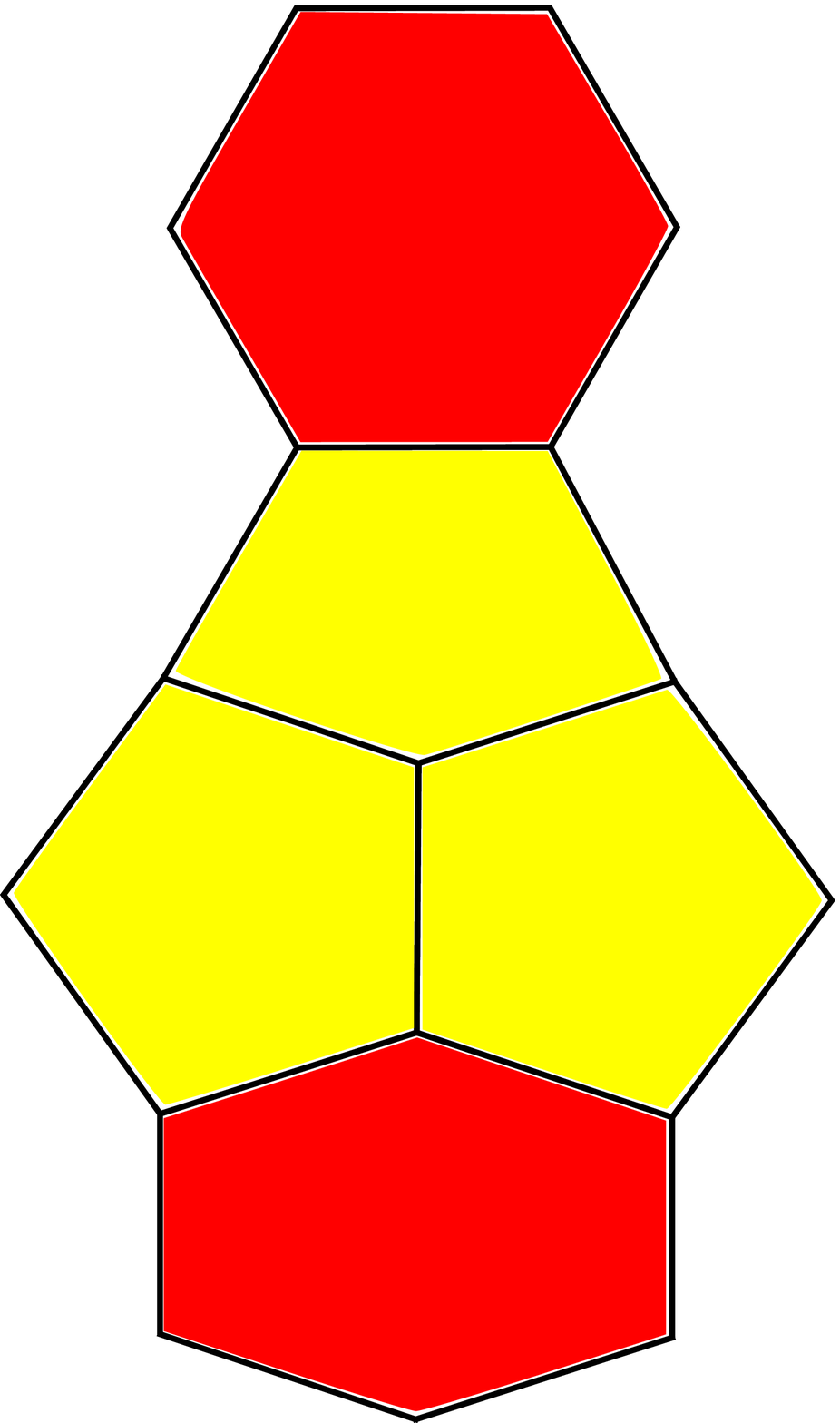}\\
a)&b)&c)&d
\end{tabular}
\caption{}\label{NIPR-F}
\end{figure}
\begin{proof}
The facets on each fragment are pairwise different by Corollary
\ref{Flag-ij}, since they belong to a set of two adjacent facets and facets
surrounding them.

By assumption $P$ contains two incident pentagons: $F_i$ and $F_j$, $F_i\cap
F_j\ne\varnothing$. Consider the facets $F_k$, $F_l$ such that $F_i\cap
F_j\cap F_k$ and $F_i\cap F_j\cap F_l$ are vertices. If both facets $F_k$,
$F_l$ are hexagons, then $P$ contains the fragment c). Else one of them is a
pentagon. Without loss of generality let it be $F_k$. We obtain three
pentagons with a common vertex (Fig.~\ref{NIPR-Pic}a).

If $F_p$, $F_q$, $F_l$ are pentagons, we obtain the fragment b).

Let two of them, say $F_p$ and $F_q$ be hexagons. Then either $F_u$ is a
pentagon and we obtain the fragment c), or $F_u$ is a hexagon
(Fig.~\ref{NIPR-Pic}b). If $F_v$, $F_l$, or $F_w$ is a hexagon, we obtain the
fragment d). If all of them are pentagons, we obtain the fragment b).

Now let one of the facets $F_p$, $F_q$, $F_l$, say $F_l$ be a hexagon, and
two others -- pentagons (Fig.~\ref{NIPR-Pic}c). Then either $F_u$ is a
hexagon and we obtain the fragment d), or it is a pentagon and we obtain the
fragment a).
\begin{figure}[h]
\begin{tabular}{ccc}
\includegraphics[scale=0.2]{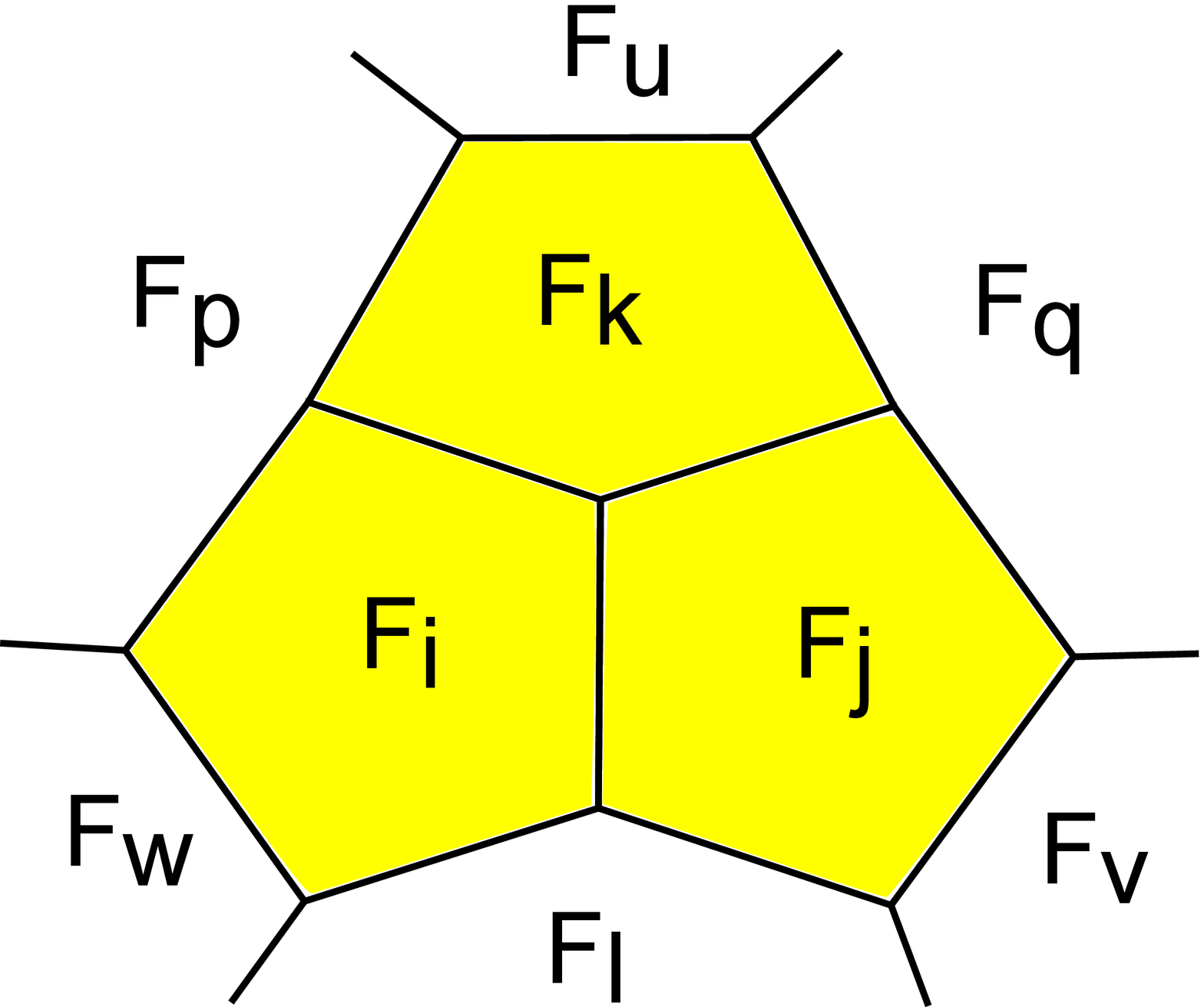}&\includegraphics[scale=0.15]{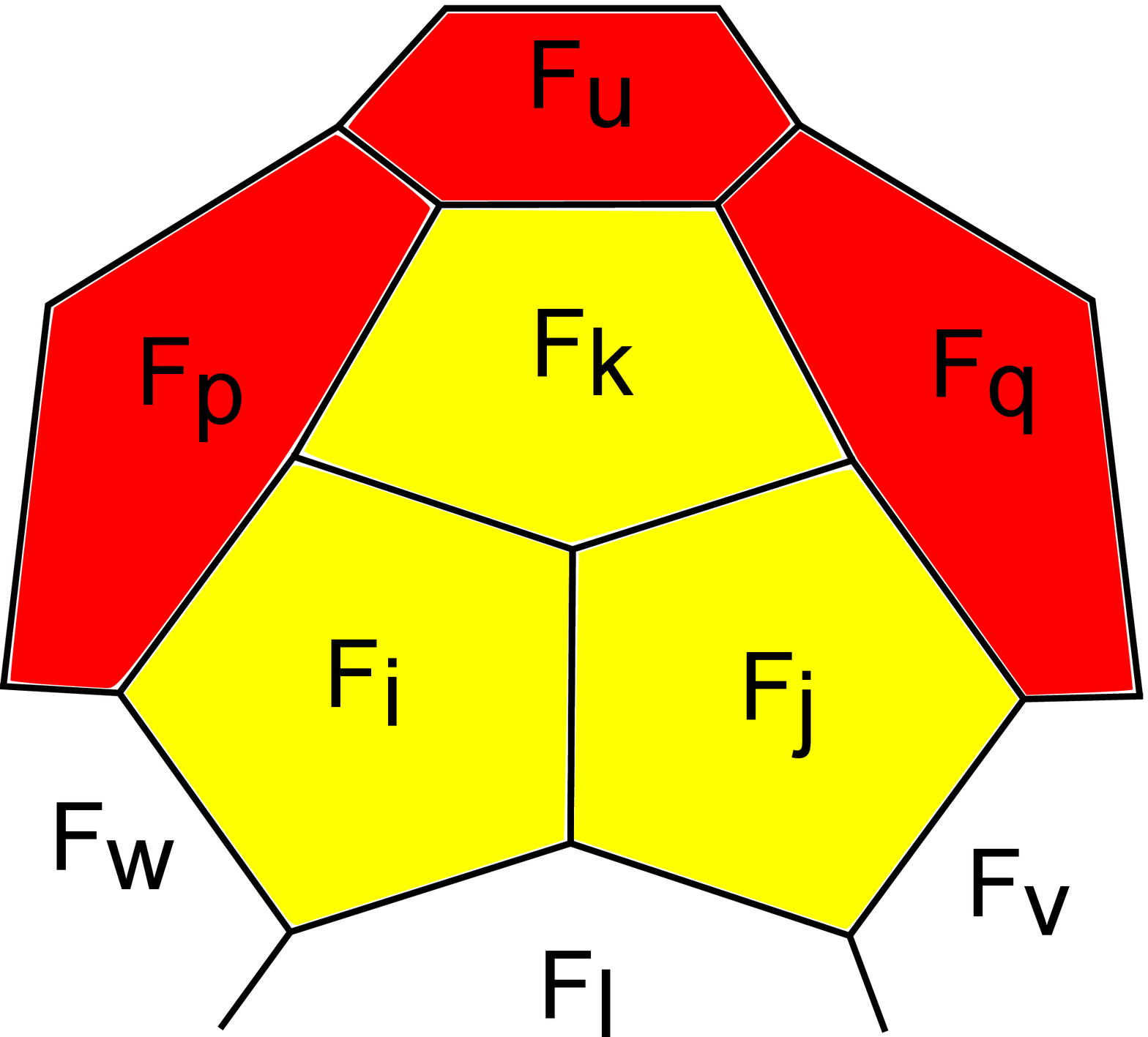}&\includegraphics[scale=0.15]{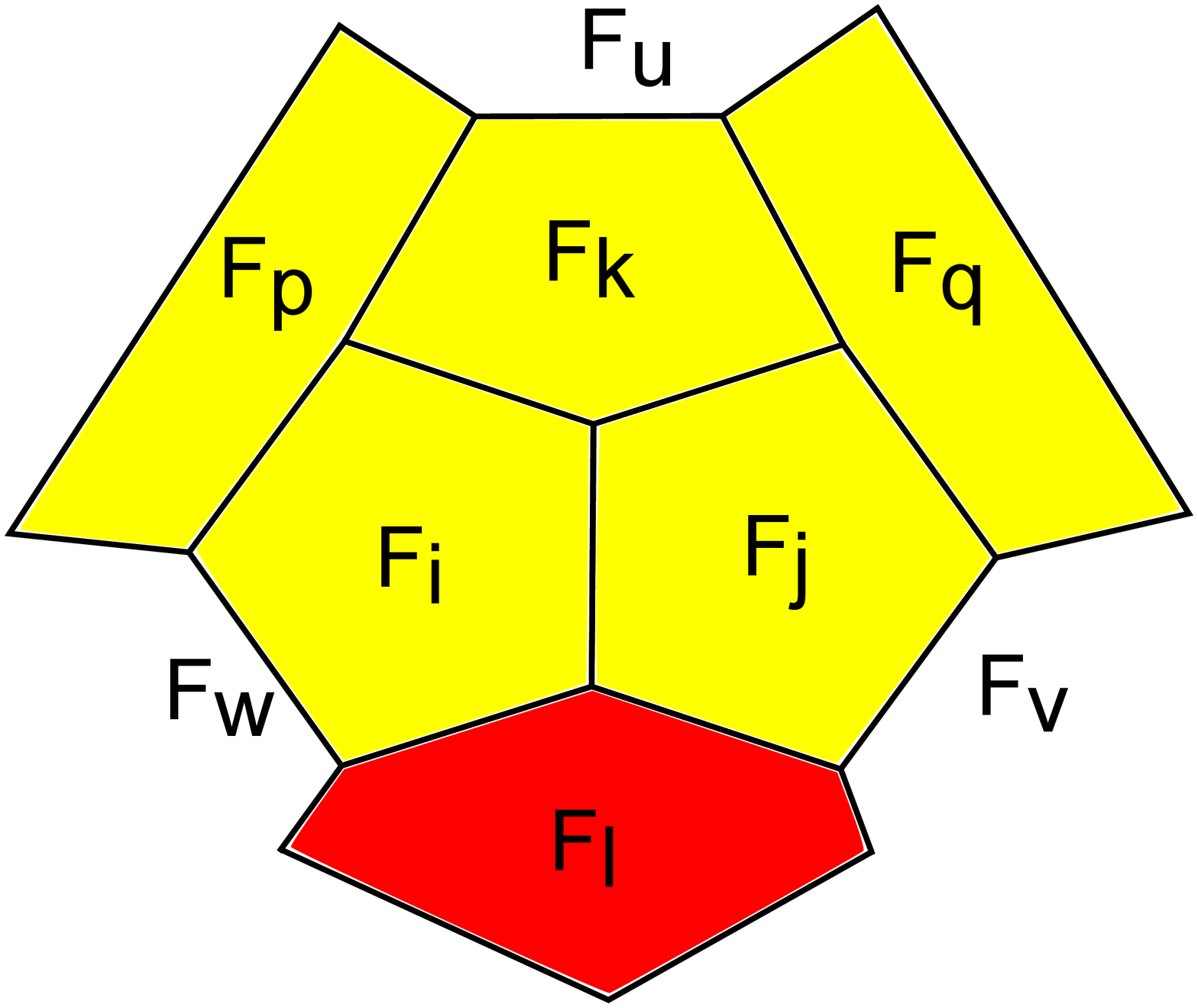}\\
a)&b)&c)
\end{tabular}
\caption{}\label{NIPR-Pic}
\end{figure}

\end{proof}

\begin{lemma}\label{IPR-lemma}
Let $P$ be an $IPR$-fullerene. Then it contains one of the fragments drawn on
Fig.~\ref{Roads-F} with all the facets on the fragment being pairwise
different. Here
\begin{enumerate}
\item on a) there can be arbitrary $k\geqslant 1$ hexagons between
    pentagons;
\item on b) there can be arbitrary $k_1,k_2\geqslant 0$,
    $\max\{k_1,k_2\}\geqslant 1$ hexagons between the left pentagon and
    the red hexagon, and between the red hexagon and the right pentagon.
\end{enumerate}
\end{lemma}
\begin{figure}[h]
\begin{tabular}{ccc}
\includegraphics[scale=0.2]{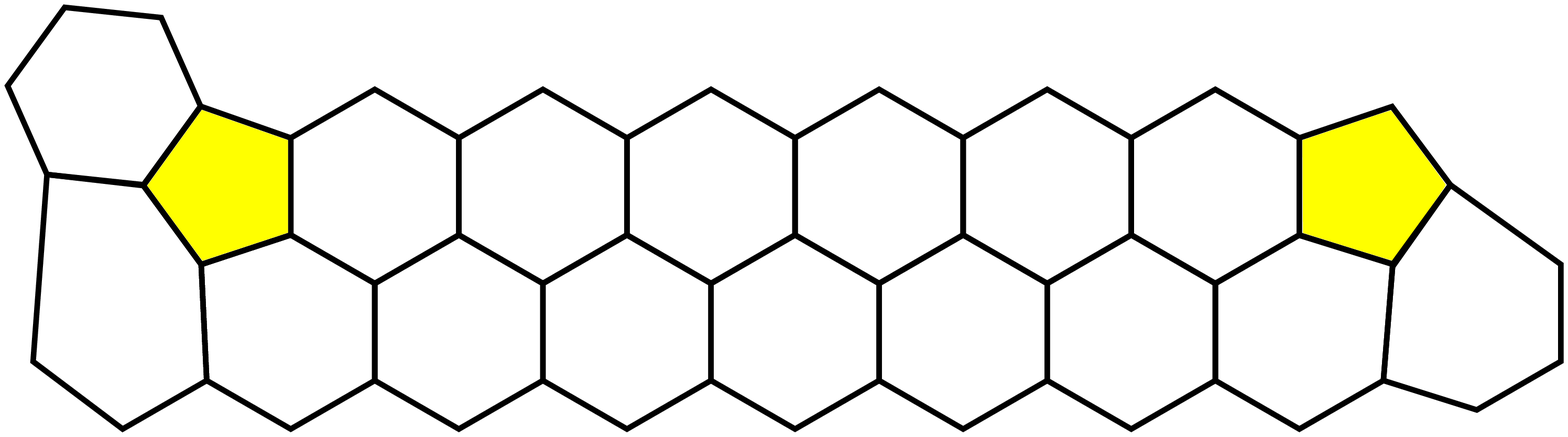}&\includegraphics[scale=0.2]{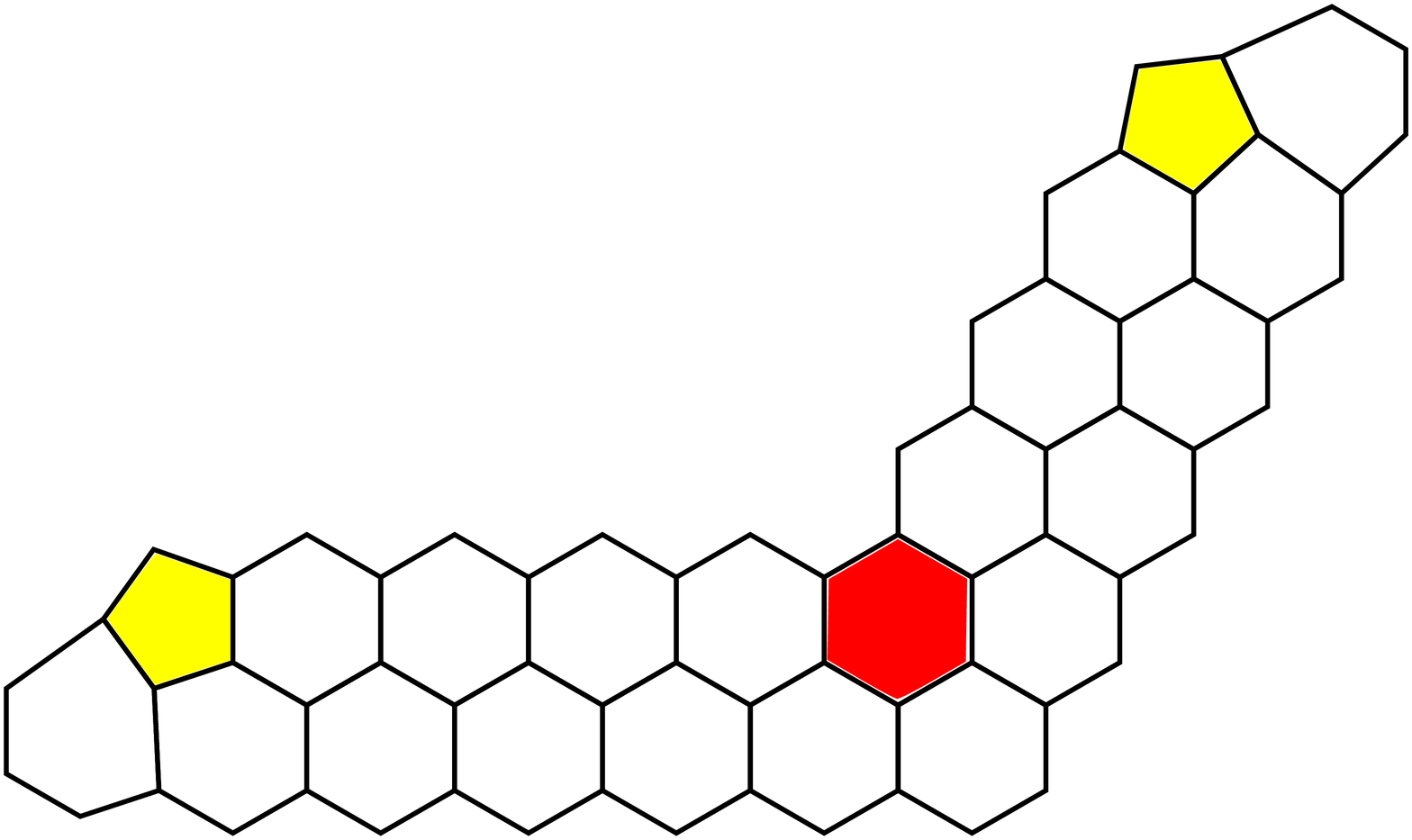}&\includegraphics[scale=0.25]{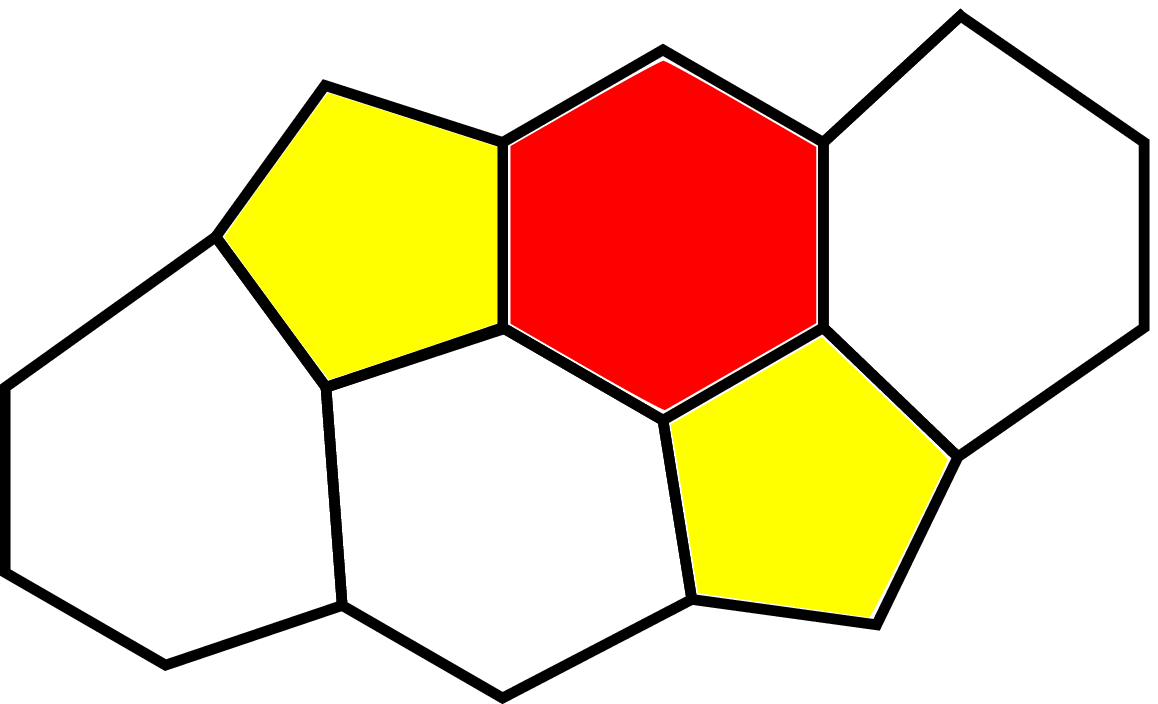}\\
a)&b)&c)
\end{tabular}
\caption{}\label{Roads-F}
\end{figure}

\begin{proof}
Consider the shortest thick path $\mathcal{P}=(F_{i_0},F_{i_1},\dots,
F_{i_{s+1}})$ connecting two different pentagons in $P$. Then $s\geqslant 1$.

\begin{figure}[h]
\begin{center}
\includegraphics[scale=0.2]{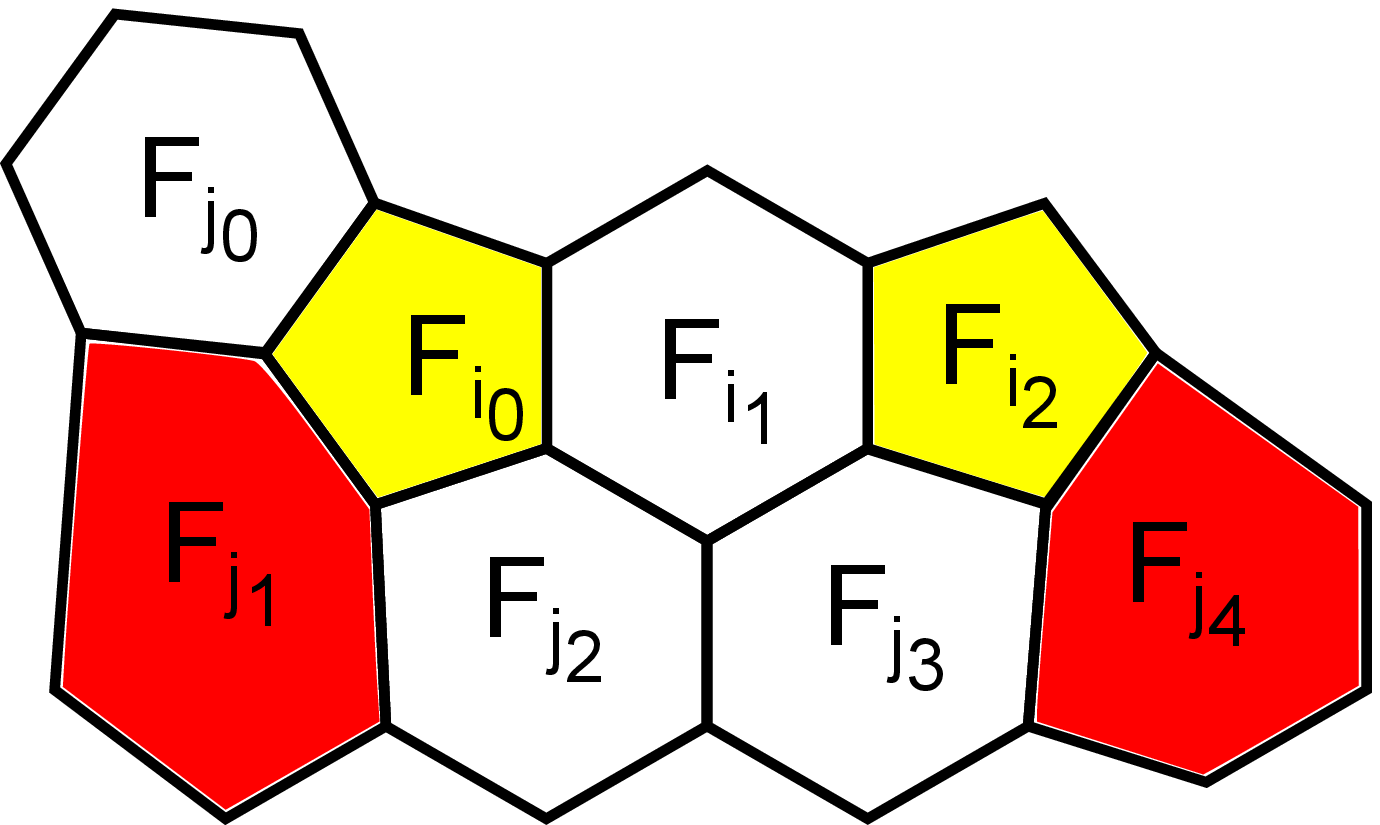}
\end{center}
\caption{}\label{s-1}
\end{figure}
Let $s=1$. Then we obtain one of the fragment a) and c). On the fragment c)
all the facets are pairwise different by Corollary \ref{Flag-ij}. The
fragments a) is drawn on Fig.~\ref{s-1}. By Corollary \ref{Flag-ij} the
facets $\{F_{i_0},F_{i_1}\}$ and the facets surrounding them are pairwise
different. The same is true for $\{F_{i_1},F_{i_2}\}$. Then if two facets on
the fragment coincide, then this is a pair of red hexagons. This is
impossible, since they belong to the facets surrounding two hexagons
$F_{j_2}$ and $F_{j_3}$.

Now let $s\geqslant 2$. Then we have.

(1) $F_{i_0}\ne F_{i_{s+1}}$ are pentagons and facets in
$\mathcal{H}_6=\{F_{i_j},1\leqslant j\leqslant s\}$ are hexagons, else there
is a shorter thick path than $\P$.

(2) $F_{i_1}$ intersects no pentagons except for $F_{i_0}$, $F_{i_k}$
intersects no pentagons except for $F_{i_{k+1}}$, and
$F_{i_2},\dots,F_{i_{k-1}}$ intersect no pentagons.  Else there is a thick
path shorter than~$\P$.

(3) All the facets in $\mathcal{H}_6$ are pairwise different, else if
$F_{i_k}=F_{i_l}$ with $k<l$, then the thick path $(F_{i_0},\dots,
F_{i_k},F_{i_{l+1}},\dots, F_{i_{s+1}})$ is shorter than $\P$.

(4) If $F_{i_p}\cap F_{i_q}\ne\varnothing$ for $p<q$, then $q=p+1$. Else the
thick path $(F_{i_1},\dots, F_{i_p},F_{i_q},F_{i_{q+1}},\dots, F_{i_s})$ is
shorter than $\P$.

(5) For any vertex of $P$ the thick path $\P$ passes no more than two facets
containing this vertex. Indeed, if $F_{i_k}\cap F_{i_l}\cap F_{i_r}$ is a
vertex for some $k<l<r$, then $F_{i_k}\cap F_{i_r}\ne\varnothing$, which
contradicts (4).

(6) Consider a hexagon $F_{i_k}$. The edges $F_{i_{k-1}}\cap F_{i_k}$ and
$F_{i_{k+1}}\cap F_{i_k}$ can not be incident, therefore we either <<go
forward>>, if they are opposite, or <<turn left>> or <<turn right>>, if there
is an edge incident to them both.

(7) Two successive turns can not be in the same direction. Without loss of
generality assume that there are two successive turns left
(Fig.~\ref{turn}a): after $F_{i_p}$ and $F_{i_q}$, $q\geqslant p+1$.
\begin{figure}[h]
\begin{tabular}{cccc}
\includegraphics[scale=0.3]{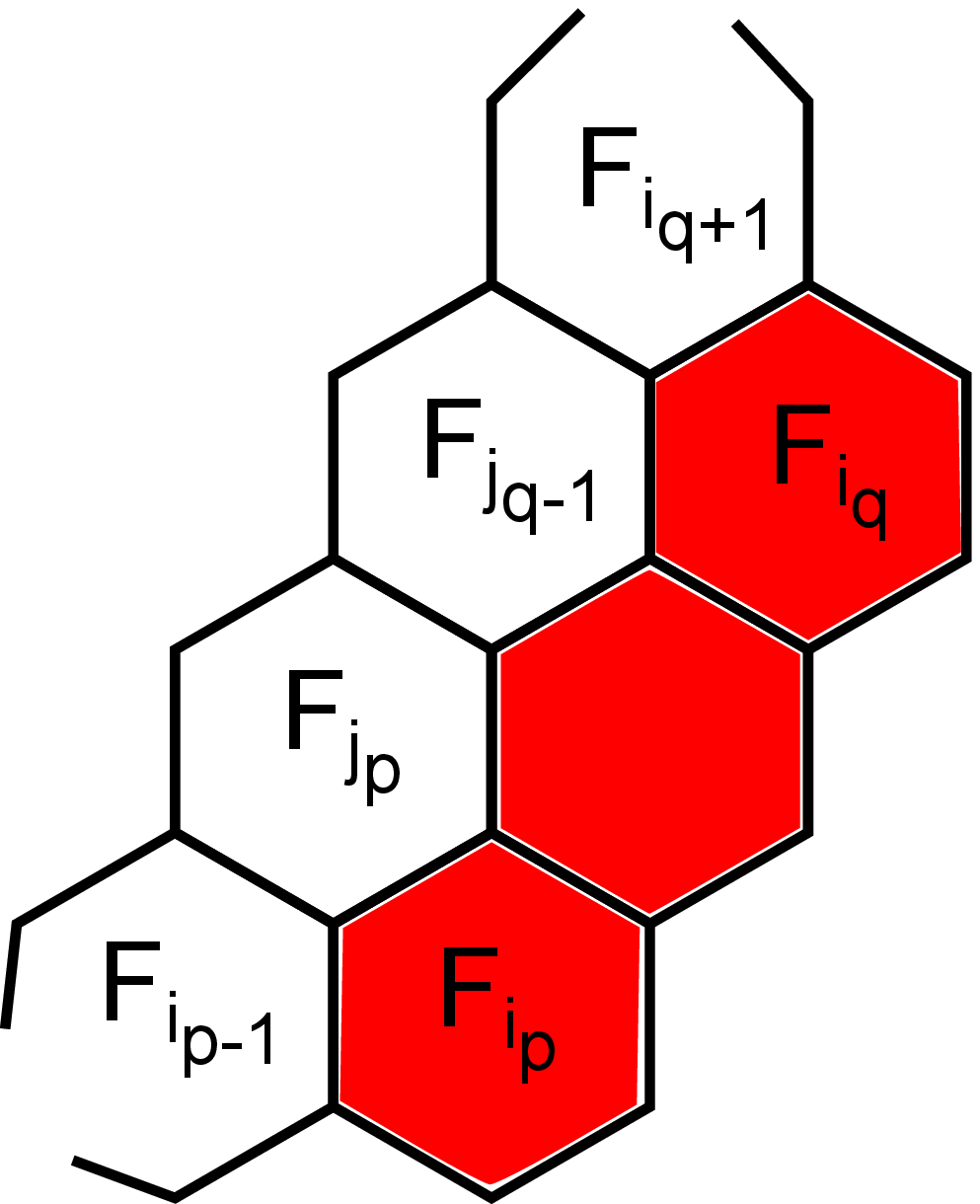}&\includegraphics[scale=0.3]{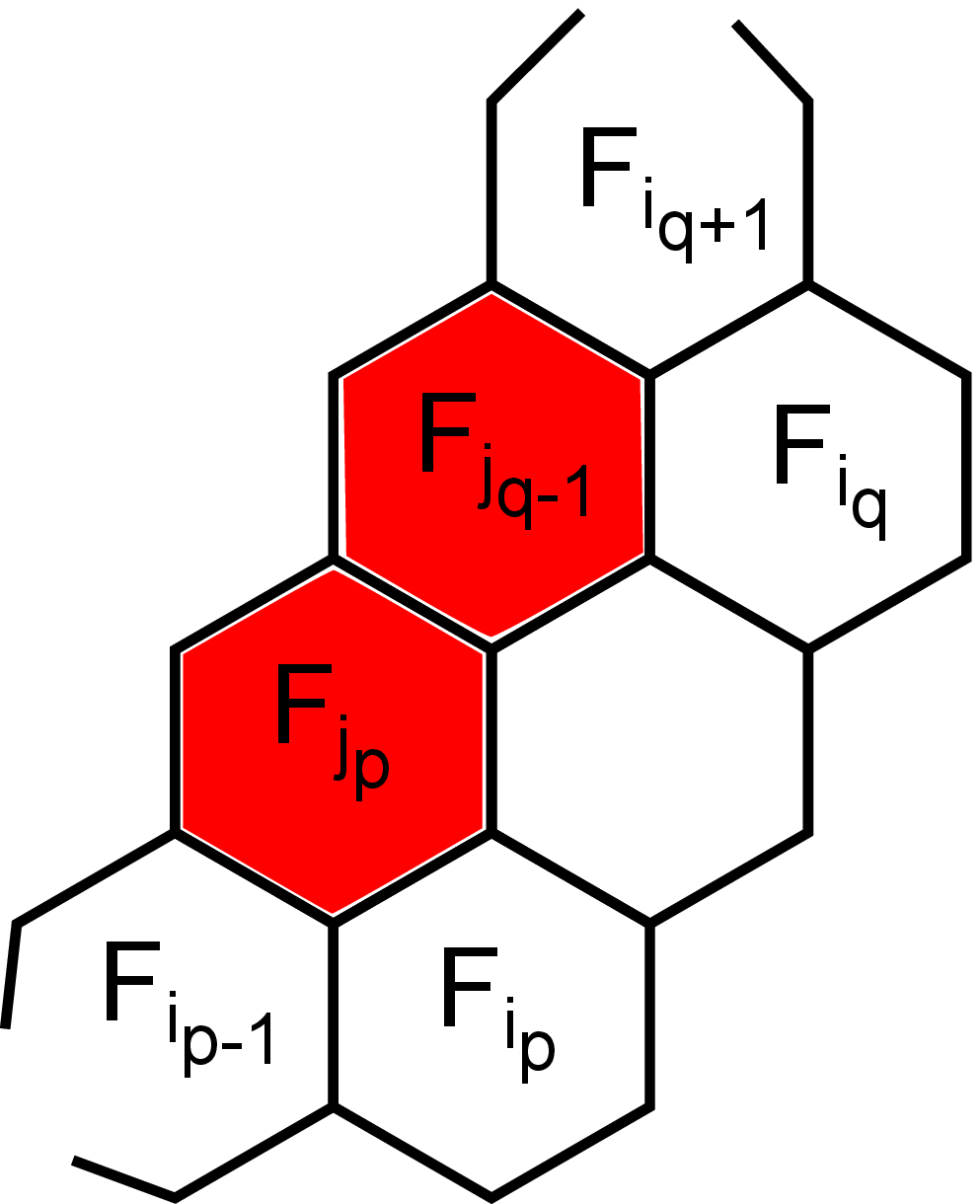}&\includegraphics[scale=0.3]{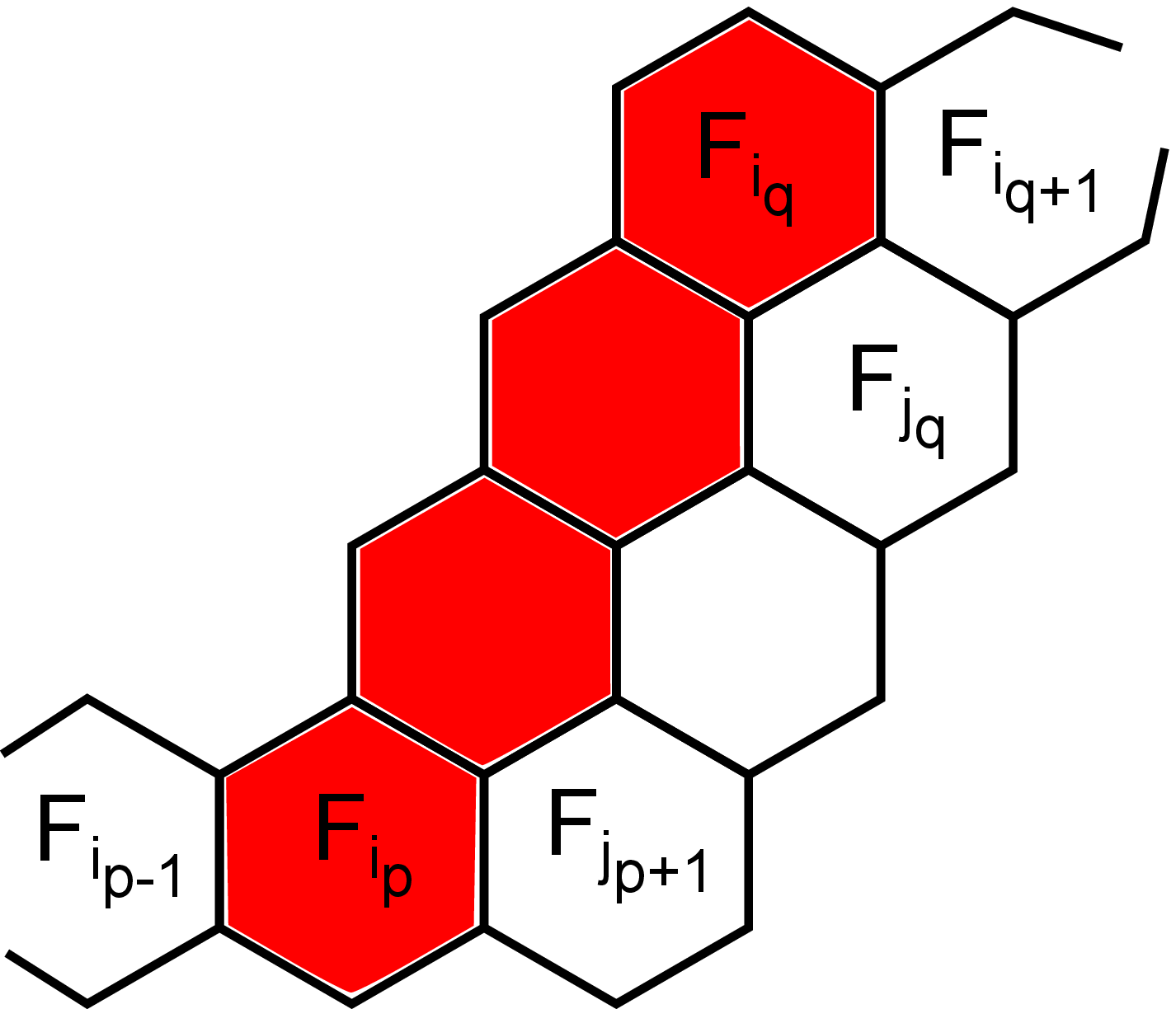}&\includegraphics[scale=0.3]{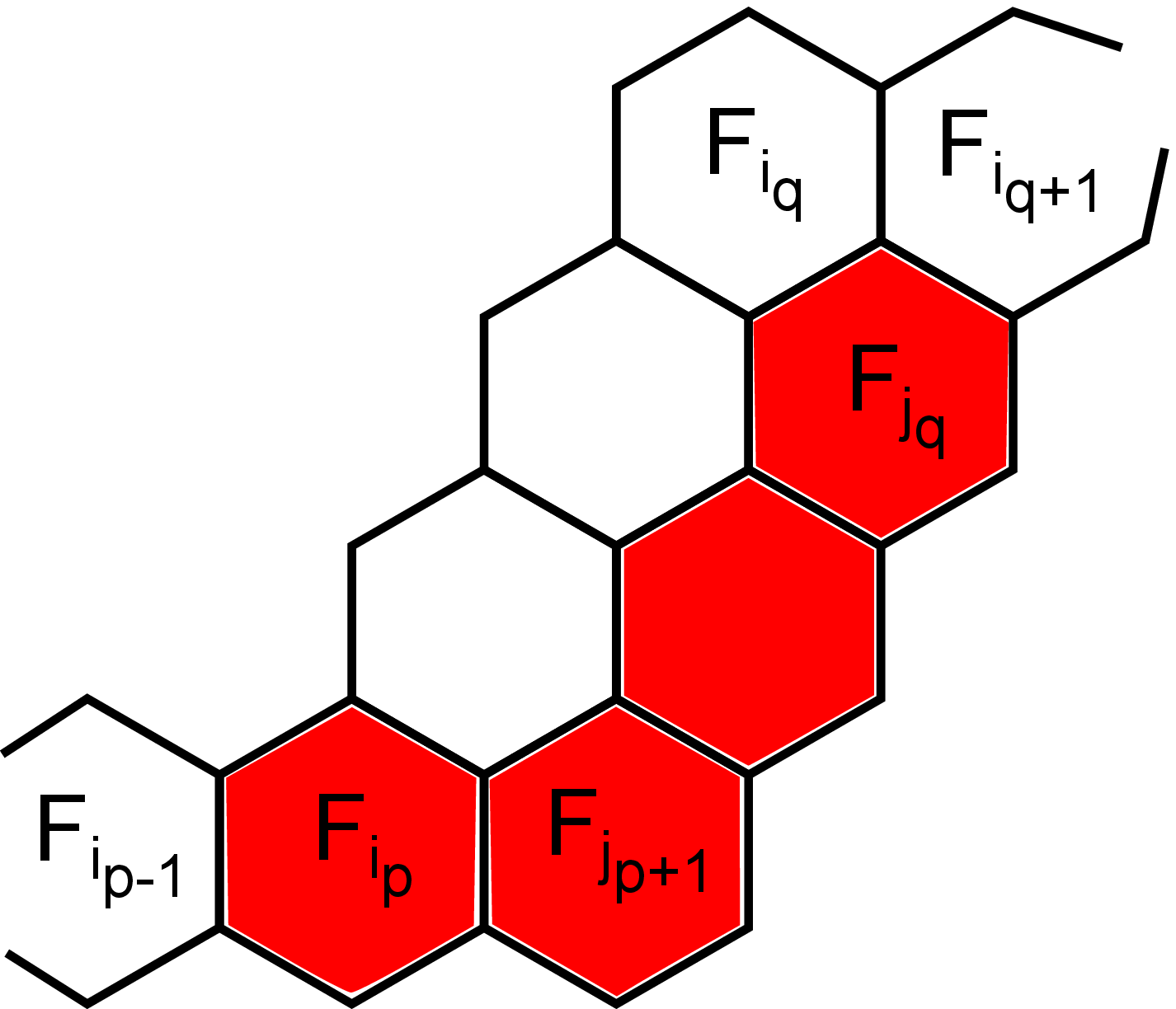}\\
a)&b)&c)&d)
\end{tabular}
\caption{}\label{turn}
\end{figure}
Then the <<direct segment>> $(F_{i_p},F_{i_{p+1}},\dots,
F_{i_{q-1}},F_{i_q})$  borders on the left the <<direct segment>>
$(F_{i_{p-1}},F_{j_p},\dots, F_{j_{q-1}}, F_{i_{q+1}})$ (Fig.~\ref{turn}b),
where $\{F_{j_p},\dots, F_{j_{q-1}}\}$ is some collection of hexagons by (2).
Then the thick path $(F_{i_0},\dots, F_{i_{p-1}},F_{j_p},\dots, F_{j_{q-1}},
F_{i_{q+1}},\dots, F_{i_{s+1}})$ is shorter than~$\P$.

(8) $\B$ can be chosen in such a way that there are no more than one
<<turn>>. Indeed, let there be two successive <<turns>>. Consider the first
two of them: after $F_p$, and after $F_q$, $q\geqslant p+1$. By (7) they
should be in opposite directions. Without loss of generality we can assume
that the first turn was left, and the second -- right (Fig.~\ref{turn}c).
Then on the right the <<direct segment>> $(F_{i_p},F_{i_{p+1}},\dots,
F_{i_{q-1}},F_{i_q})$ borders the <<direct segment>> $(F_{j_{p+1}},\dots,
F_{j_q},F_{i_{q+1}})$, where $\{F_{j_{p+1}},\dots, F_{j_q}\}$ are hexagons by
(2). Then the thick path $(F_{i_0},\dots, F_{i_p}, F_{j_{p+1}},\dots,
F_{j_q},F_{i_{q+1}},\dots, F_{i_{s+1}})$ has the same minimal length as $\B$,
hence it has all the properties (1)-(7). The new thick path has the first
turn later then $\B$. Applying this arguments to new paths we can move until
there are no more than one turn. In the end we obtain a minimal path $\B'$.
If there is no turns, then we obtain the fragment a). Else if the turn is
left, we obtain the fragment b). If the turn is right, then walking backward
we obtain the left turn.

To finish the proof we need to show that all the facets of the fragments a)
and b) we obtained are pairwise different. We do this by steps.

(a) $F_{i_0}$ and $F_{i_s}$ are the only pentagons on the fragments,
therefore they are different from other facets on the fragments.

(b) The facets of $\B'$ are pairwise different by (a) and (3).

\begin{figure}[h]
\begin{tabular}{cc}
\includegraphics[scale=0.27]{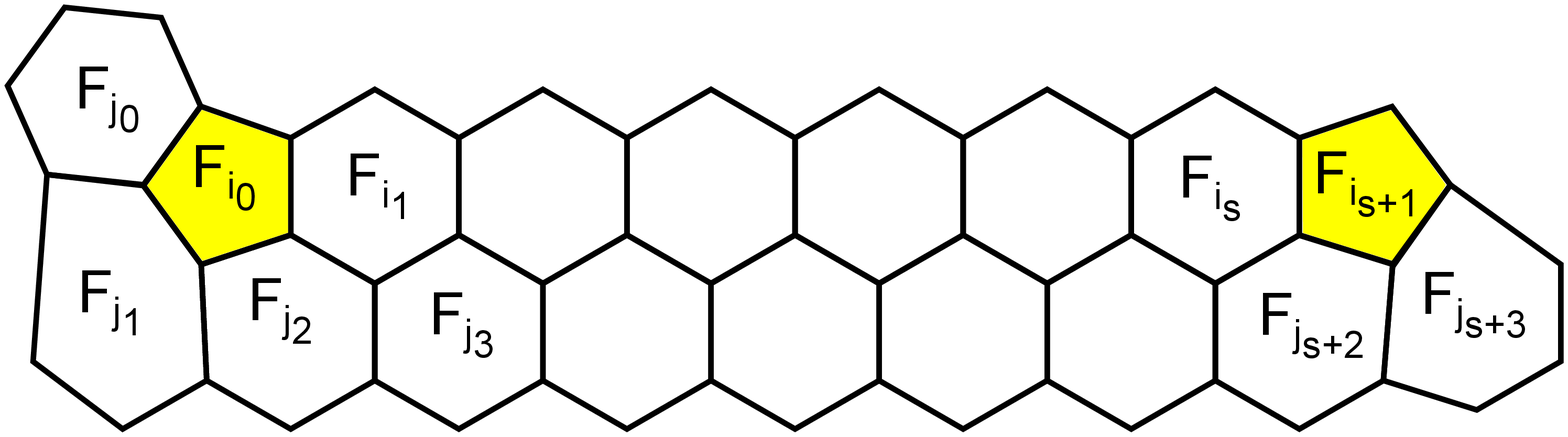}&\includegraphics[scale=0.27]{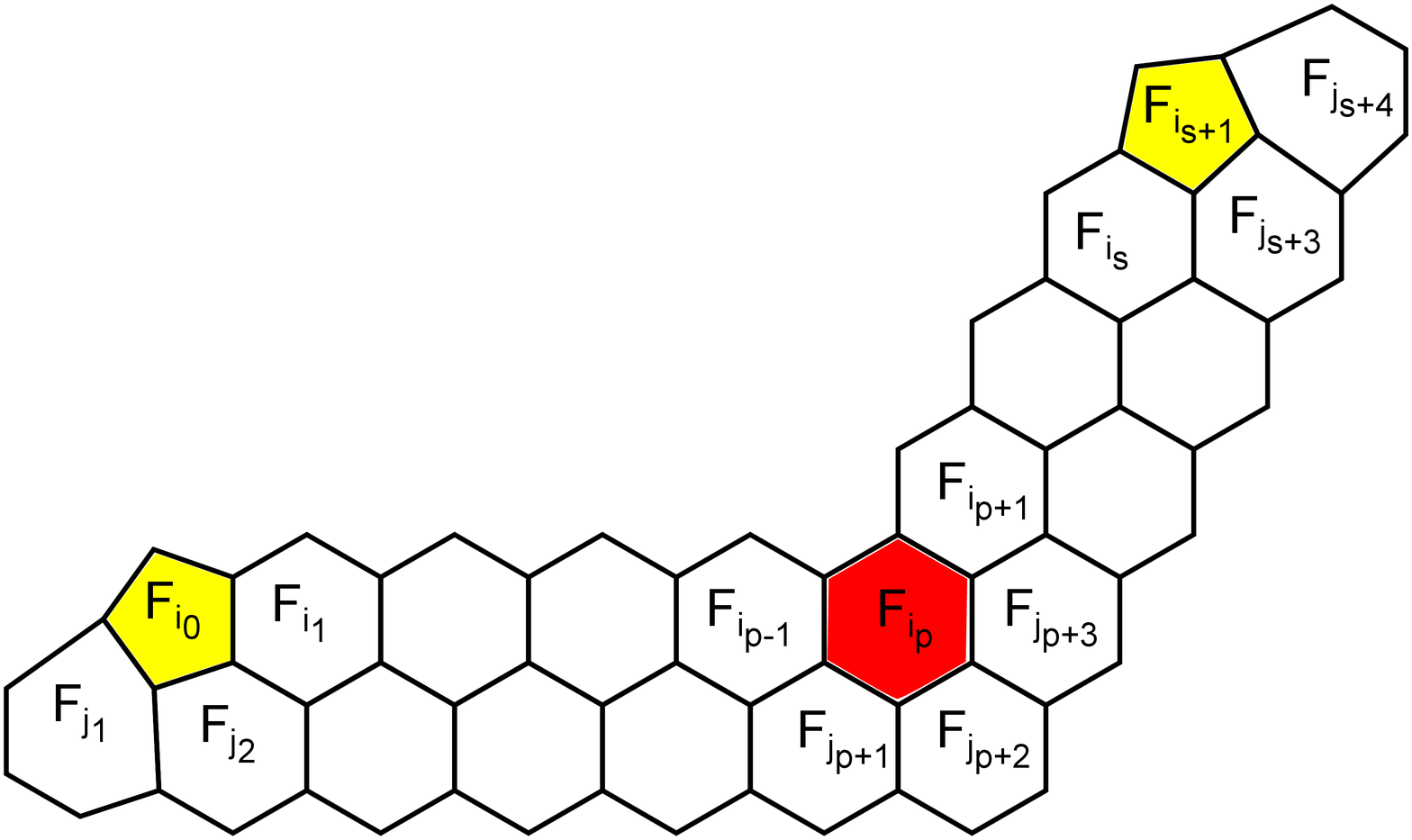}\\
a)&b)
\end{tabular}
\caption{}\label{Roads-F-L}
\end{figure}

(c) $\{F_{i_0},F_{i_1}\}$ and the facets surrounding them are pairwise
different by Lemma \ref{Flag-ij}. The same is true for
$\{F_{i_s},F_{i_{s+1}}\}$.

(d) By (c) if one of the facets $F_u$ surrounding a pentagon $F_{i_0}$ on the
picture coincides with some other facet, then this should be either
$F_{i_l}$, $3\leqslant l\leqslant s-1$, or  $F_{j_r}$, $4\leqslant r\leqslant
s+1$ on a) and $4\leqslant r\leqslant s+2$ on b) . If $F_u=F_{i_l}$, then the
thick path $(F_{i_0},F_{i_l},F_{i_{l+1}},\dots, F_{i_{s+1}})$ is shorter then
$\B'$. If $F_u=F_{j_r}$, then the thick path
$(F_{i_0},F_{j_r},F_{i_{r-1}},\dots, F_{i_{s+1}})$ is shorter than $\B'$ for
all $r$ on a) and for $r\leqslant {p+1}$ on b).

Consider the case b) when $F_u=F_{j_r}$, $p+2\leqslant r\leqslant s+2$. The
facets $F_{i_1}$, $F_{j_1}$, and $F_{j_4}$ belong to the facets surrounding
$F_{j_2}$ and $F_{j_3}$,  therefore $F_u\ne F_{j_4}$ and $r\geqslant 5$. Then
the thick path $(F_{i_0},F_{j_r},F_{i_{r-2}},\dots, F_{i_{s+1}})$ is shorter
than $\B'$.

Thus we have proved that any of the facets on the picture surrounding
$F_{i_0}$ differs from all other facets. By the symmetry the same is true for
the facets surrounding $F_{i_{s+1}}$.

(e) We have proved that if two facets coincide, then no more than one of them
has the form $F_{i_l}$,  $2\leqslant l\leqslant s-1$, and others have the
form $F_{j_r}$, where  $3\leqslant r\leqslant s+1$ for a) and  $3\leqslant
r\leqslant s+2$ for b). If $F_{i_l}=F_{j_r}$, then for all $r$ on a) and for
$r\leqslant p+1$ on b) we have $F_{i_l}\notin\{F_{i_{r-2}},F_{i_{r-1}}\}$,
and $F_{i_l}\cap F_{i_{r-2}}\cap F_{i_{r-1}}$ is a vertex, which is a
contradiction to (5). On b) by the symmetry $F_{i_l}\ne F_{j_r}$ for
$r\geqslant p+3$. On b) if $F_{i_l}=F_{j_{p+2}}$, then $F_{i_l}\cap
F_{i_p}\ne\varnothing$. Then $l=p+1$ or $p-1$ by (4). This is a
contradiction, since $F_{i_{p-1}}$, $F_{i_{p+1}}$ and $F_{j_{p+2}}$ are
pairwise different as surrounding the facet $F_{i_p}$.

(f) Now let $F_{j_r}=F_{j_t}$ for $3\leqslant r<t\leqslant s+1$ on a) or
$3\leqslant r<t\leqslant s+2$ on b). Then $t\geqslant r+3$, since the facets
$F_{j_r}$, $F_{j_{r+1}}$, and $F_{j_{r+2}}$ surround either two successive
facets $F_{i_l}$ and $F_{i_{l+1}}$, or one facet $F_{i_p}$, hence are
pairwise different.
Then for all $r,t$ on a) and for $t\leqslant p+1$ on b) the thick path
$(F_{i_0},\dots, F_{i_{r-2}},F_{j_r}=F_{j_t},F_{i_{t-1}},\dots, F_{i_s})$ is
shorter than $\B'$. The case $p+3\leqslant r$ is symmetric on b). Now the
case $r\leqslant p+2\leqslant t$ is left. If $r\leqslant p+1$, $p+2\leqslant
t$, then the thick path $(F_{i_0},\dots,
F_{i_{r-2}},F_{j_r}=F_{j_t},F_{i_{t-2}},\dots, F_{i_s})$ is shorter than
$\B'$. The case $r\leqslant p+2$, $p+3\leqslant t$ is symmetric. These two
cases cover the remained possibilities.

Thus we have proved that all the facets on fragments a) and b) are indeed
different.
\end{proof}

Now using Lemmas \ref{NIPR-lemma} and \ref{IPR-lemma} we obtain that any
fullerene contains one of the fragments that can appear after application of
one of the operation in (1).

\begin{lemma}\label{Inverse-lemma}
Let a combinatorial fullerene $P$ contain one of the fragments that appear
after application of operations in (1). Then $P$ indeed can be obtained from
some fullerene $Q$ by the corresponding operation.

Any of the operations in (1) are decomposed as a composition of $(1;4,5)$-,
$(1;5,5)$-, $(2,6;4,5)$-, $(2,6;5,5)$-, $(2,6;5,6)$-, $(2,7;5,5)$-, and
$(2,7;5,6)$-truncations.

All the intermediate polytopes have facets pentagons, hexagons, and no more
than one exceptional facet, which is either a quadrangle, or a heptagon.
\end{lemma}
\begin{proof}
On Fig.~\ref{OP-1-long}, \ref{OP-2-long}, \ref{OP-34-long}, \ref{OP-5-long},
\ref{OP-6-long}, and \ref{OP-7-long} for each fragment we find a sequence of
straightenings along thickened edges that transforms it to the corresponding
fragment before the application of the operation.  These operations are well
defined by Proposition \ref{utv-unfold}.

Yellow color depicts pentagons, red color -- hexagons, blue color --
quadrangles, and green color -- heptagons. We see that on each fragment on
each step there are at most one exceptional facet, which is either a
quadrangle or a heptagon. All other facets of the polytope are pentagons and
hexagons.

Each thickened edge that belongs to a quadrangle intersects by vertices
either a pentagon and a hexagon, or two hexagons. The inverse operation are
$(1;4,5)$- and $(1;5,5)$-truncations.

Each thickened edge that belongs to two pentagons intersects by vertices
either a pentagon and a hexagon, or two hexagons, or a hexagon and a
heptagon. The inverse operations are $(2,6;4,5)$-, $(2,6;5,5)$-, and
$(2,6;5,6)$-truncations.

Each thickened edge that belongs to a pentagon and a hexagon intersects by
vertices either two hexagons, or a hexagon and a heptagon. The inverse
operation are $(2,7;5,5)$- and $(2,7;5,6)$-truncations.
\begin{figure}[h]
\begin{center}
\includegraphics[height=2.6cm]{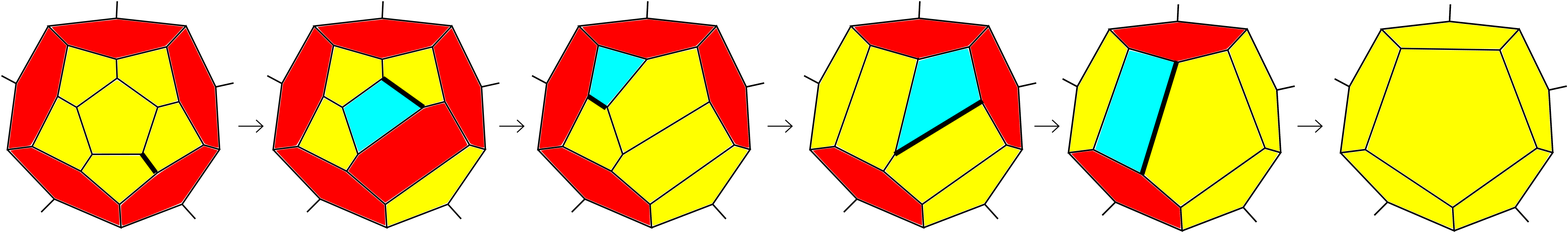}
\end{center}
\caption{}\label{OP-1-long}
\end{figure}

\begin{figure}[h]
\begin{center}
\includegraphics[height=3cm]{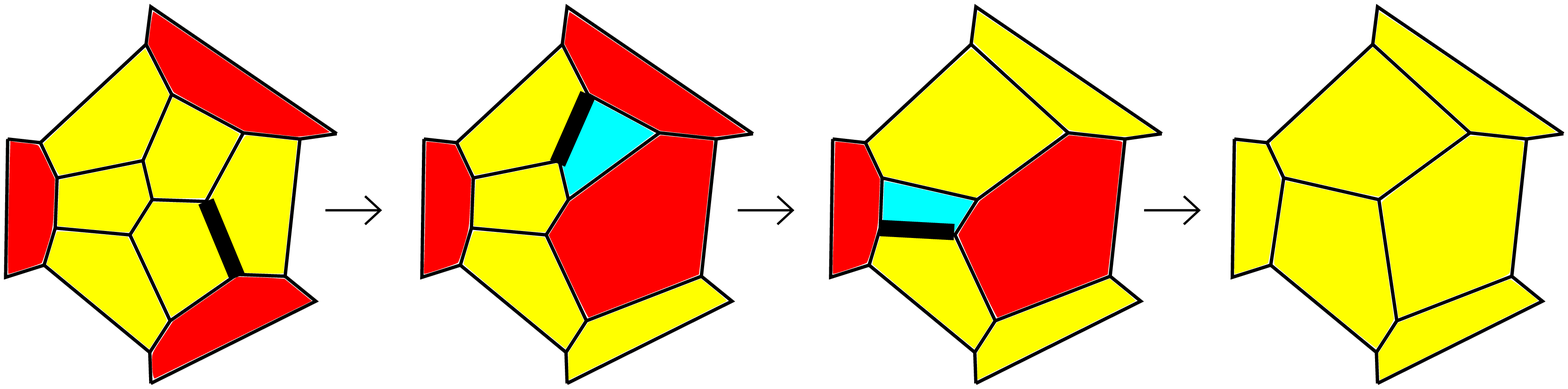}
\end{center}
\caption{}\label{OP-2-long}
\end{figure}

\begin{figure}[h]
\begin{tabular}{ccc}
\includegraphics[height=2.4cm]{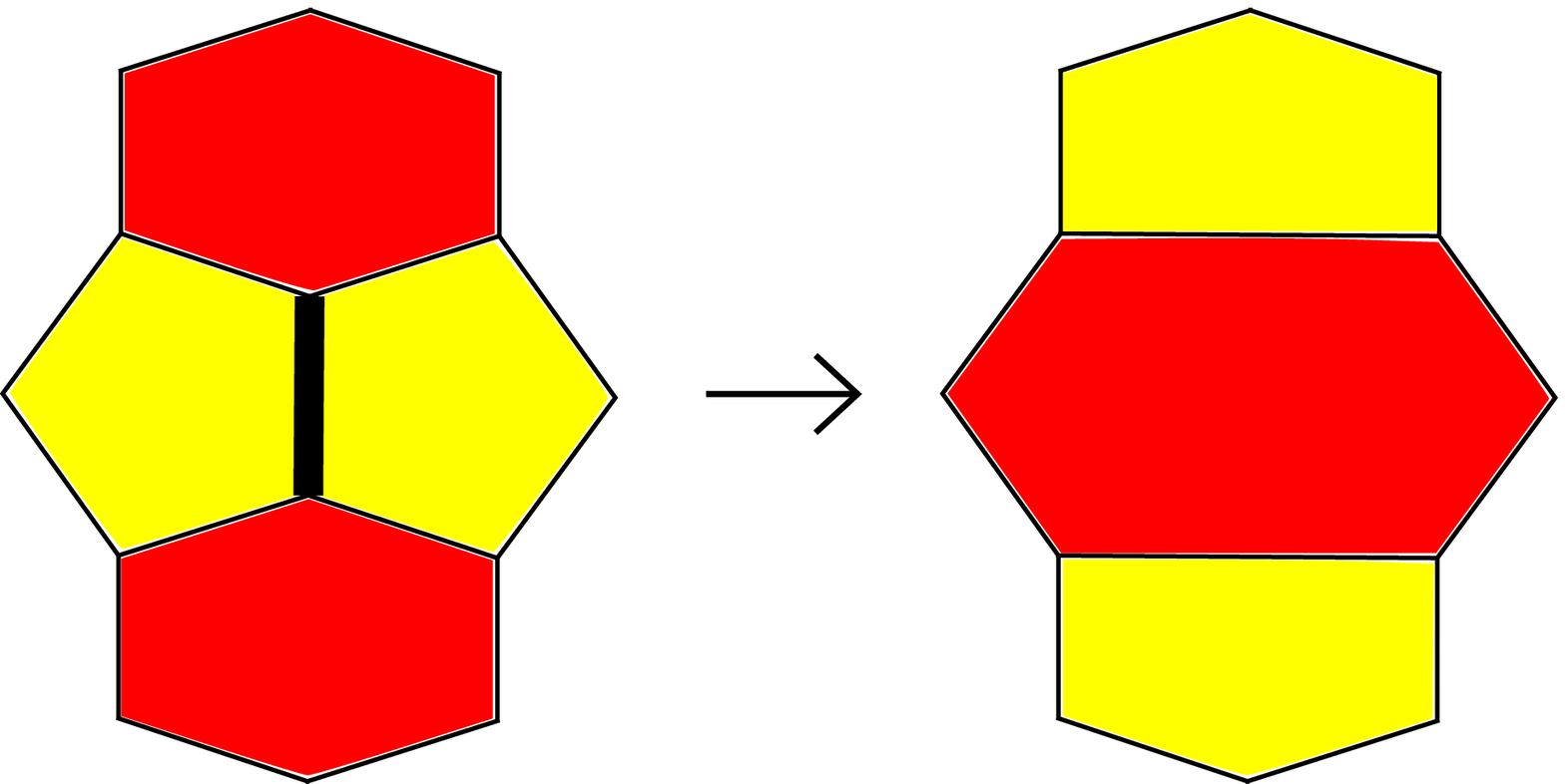}&&\includegraphics[height=2.4cm]{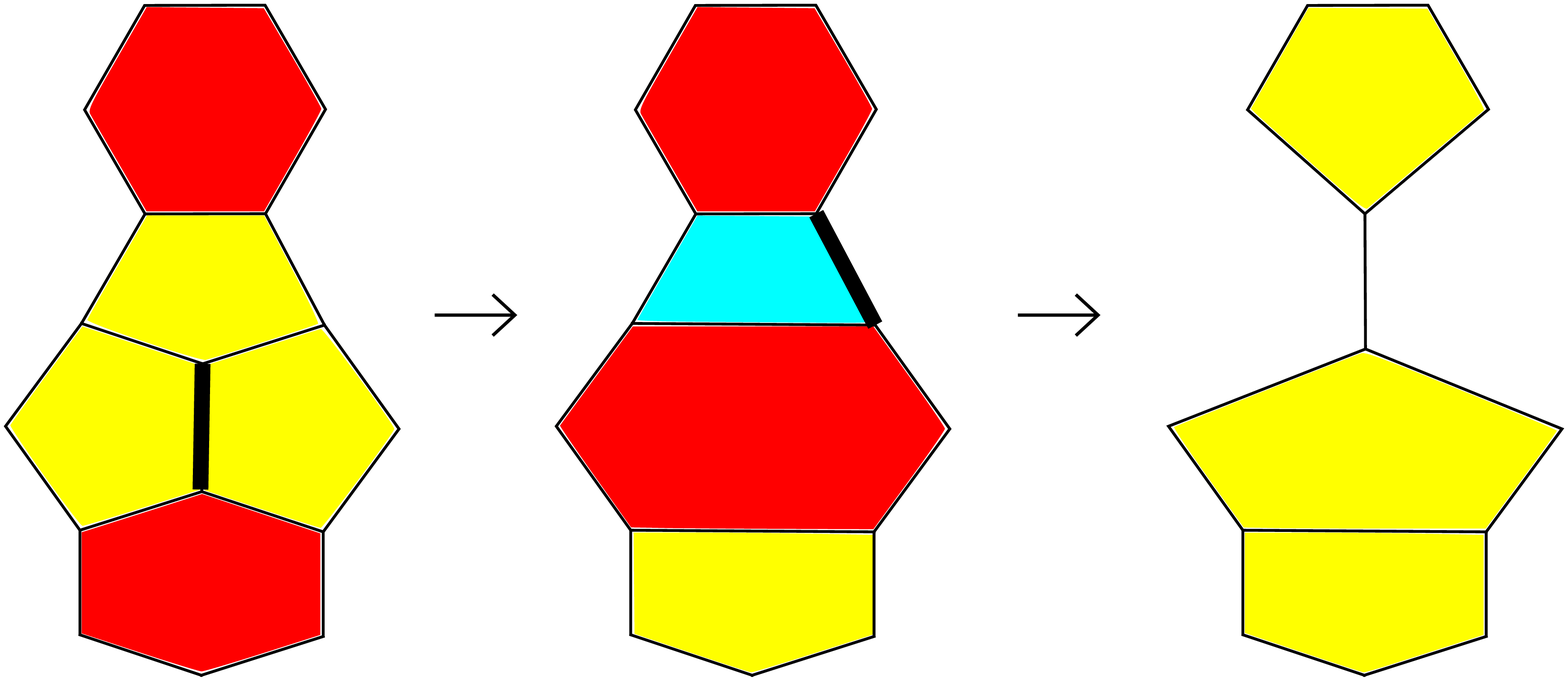}\\
a)&&b)
\end{tabular}
\caption{}\label{OP-34-long}
\end{figure}

\begin{figure}[h]
\begin{center}
\includegraphics[height=2.4cm]{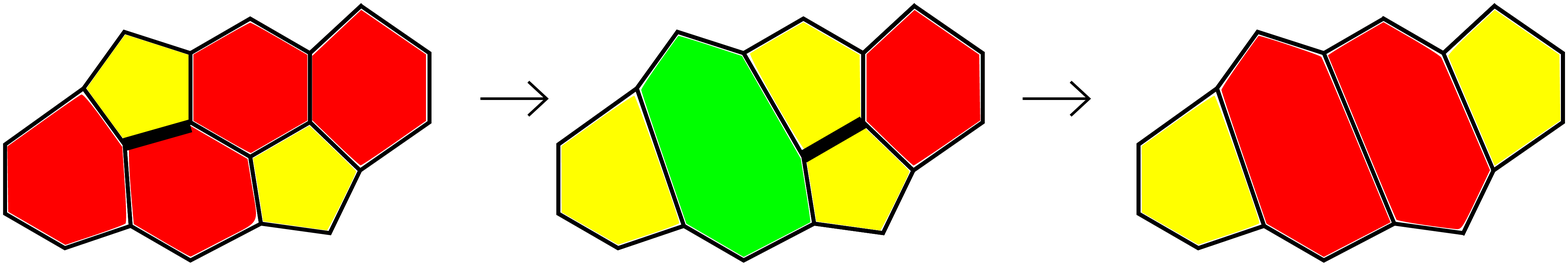}\\
\end{center}
\caption{}\label{OP-5-long}
\end{figure}

\begin{figure}[h]
\begin{center}
\includegraphics[scale=0.15]{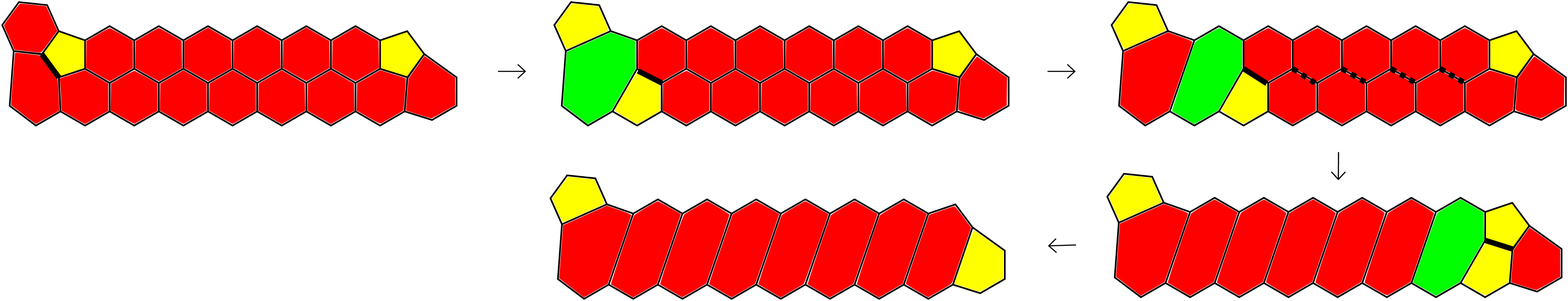}
\end{center}
\caption{}\label{OP-6-long}
\end{figure}

\begin{figure}[h]
\begin{center}
\includegraphics[scale=0.13]{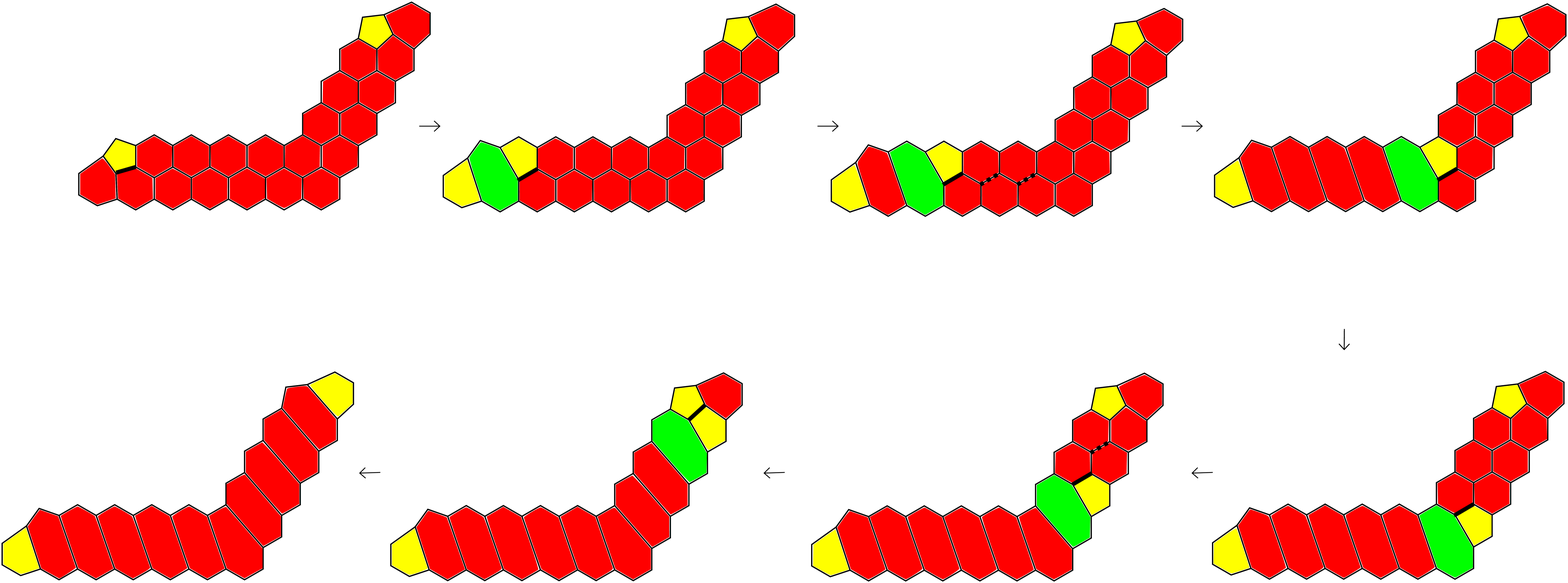}
\end{center}
\caption{}\label{OP-7-long}
\end{figure}
\end{proof}
Now the theorem follows directly from Lemma \ref{Inverse-lemma}. The first
two operations are movements from the $k$-th to the $(k+1)$-th fullerene in
the Families I and II by Theorems \ref{5-belt-theorem} and \ref{131-theorem}.
\end{proof}


\end{document}